\def\R{\mathbb R}
\def\N{\mathbb N}
\def\Z{\mathbb Z}
\def\C{\mathbb C}
\def\H{\mathcal H}
\def\x{\mathbf x}
\DeclareMathOperator\U{U}
\DeclareMathOperator\SU{SU}
\def\y{\mathbf y}
\def\vol{\mathrm{vol}}
\def\hess{\mathrm{Hess}\,}
\def\dist{\mathrm{dist}}
\newtheorem*{thma}{Theorem A}
\newtheorem*{thmb}{Theorem B}
\newtheorem{thm}{Theorem}[section]
\newtheorem{lemm}[thm]{Lemma}
\newtheorem{cor}[thm]{Corollary}
\newtheorem{prop}[thm]{Proposition}
\newtheorem{lem}[thm]{Lemma}
\theoremstyle{remark}
\newtheorem{rmk}[thm]{Remark}
\theoremstyle{definition}
\newtheorem{dfn}[thm]{Definition}
\def\eq#1{{\rm(\ref{#1})}}
\def\R{\mathbb{R}}
\def\C{\mathbb{C}}
\def\N{\mathbb{N}}
\def\bfx{{\bf x}}
\def\bfy{{\bf y}}
\def\bfv{{\bf v}}
\def\d{{\rm d}}
\def\w{\wedge}
\DeclareMathOperator\loc{loc}
\DeclareMathOperator\Div{div}
\DeclareMathOperator\id{id}
\title{Uniqueness of Lagrangian self-expanders}
\author{Jason D.~Lotay and Andr\'e Neves}
\address{Department of Mathematics \\ University College London \\ Gower Street \\ London WC1E 6BT \\ United Kingdom}
\email{j.lotay@ucl.ac.uk}
\address{Imperial College London\\ Huxley Building \\ 180 Queen's Gate \\ London SW7 2RH \\ United Kingdom}
\email{a.neves@imperial.ac.uk}
\thanks{The first author was supported by an EPSRC Career Acceleration Fellowship.  The second author was partly supported by Marie Curie IRG Grant and ERC Start Grant.}
\begin{document}

\maketitle

\begin{abstract} {We} show that zero-Maslov class Lagrangian self-expanders in $\C^n$ which are asymptotic to a pair of planes intersecting {transversely} are locally unique if $n>2$ and unique if $n=2$.
\end{abstract}

\section{Introduction}

Self-similar solutions to mean curvature flow model the flow behaviour near a singularity. If the initial condition for the flow is a zero-Maslov class Lagrangian in $\C^n$, it is well known \cite[Corollary 3.5]{neves2} that self-shrinkers are trivial (i.e., stationary solutions) and so the ones left to study are self-expanders. These are Lagrangians $L\subset \C^n$ so that $L_t=\sqrt{2t}L$ is a solution to mean curvature flow.

 Moreover, it is shown in \cite{neves4} that blow-downs of  eternal solutions to Lagrangian mean curvature flow (like translating solutions for instance) are self-expanders for positive time. Thus if one wants to understand whether or not non-trivial translating solutions can occur as blow-ups of finite time singularities of Lagrangian mean curvature flow, it is important that we understand self-expanders first.
 
 Another related perspective on self-expanders is that they are the simplest solutions to mean curvature flow which start on cones and hence could be seen as models to start the flow with singular initial condition.

The first examples of Lagrangian self-expanders were constructed in \cite{Anciaux,lee1,lee2}. In \cite{jlt} Joyce--Lee--Tsui generalized these constructions and in particular they  found, for any two Lagrangian planes $P_1,P_2\subset \C^n$ satisfying an angle criterion,   explicit examples of zero-Maslov {class} Lagrangians asymptotic to these planes. They are diffeomorphic to $S^{n-1}\times\R$ and can  be seen  as the equivalent of Lawlor necks for the self-expander equation.  {The construction in \cite{jlt}} is  quite general and {provides}  examples  which are asymptotic to  non-stationary cones and examples which have Maslov class. {Further examples were constructed in \cite{castro}.}

 Given a Lagrangian cone in $\C^n$ which is graphical over a real plane and such that the potential function has eigenvalues uniformly in $(-1,1)$,  Chau, Chen and He \cite{chau} showed there is a unique graphical Lagrangian self-expander  asymptotic to that cone. 

Let $P_1,P_2\subset \C^n$ be two Lagrangian planes intersecting transversely, denote the space of bounded smooth functions with compact support by $C_0^{\infty}(\C^n)$
 and let $\mathcal{H}^n$ be $n$-dimensional Hausdorff measure.

\begin{dfn}\label{asympdfn} We say the  self-expander $L$ is asymptotic to $L_0=P_1+P_2$ if
$$\lim_{t\to 0}\int_{\sqrt{2t}L}\phi \,\d\H^n=\int_{L_0}\phi \,\d\H^n
%\mbox{ for all}\phi\in L^{\infty}_0(\C^n).
$$
for all $\phi\in C_0^{\infty}(\C^n)$. 
\end{dfn}

In this paper we first show local uniqueness.

\begin{thma}
	Assume that neither $P_1+P_2$ nor $P_1-P_2$ are area-minimizing.
	
	 Let  $L$  be  a smooth zero-Maslov class Lagrangian self-expander in $\C^n$ asymptotic to $P_1+P_2$. 
	
	There is $R_0>0$ and $\varepsilon>0$ so that any smooth zero-Maslov class {Lagrangian} self-expander which is
	\begin{itemize}
	\item asymptotic to $P_1+P_2$;
	\item $\varepsilon$-close in $C^2$ to $L$ in $B_{R_0}$;
	\end{itemize}
	coincides with $L$.
\end{thma}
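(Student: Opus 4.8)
The plan is to argue by contradiction using a unique continuation principle for the self-expander equation, after first establishing enough compactness to promote the hypothesized $C^2$-closeness in a fixed ball $B_{R_0}$ to closeness on all of $\C^n$. So suppose $L$ and $L'$ are both zero-Maslov class Lagrangian self-expanders asymptotic to $P_1+P_2$ and $\varepsilon$-close in $C^2$ inside $B_{R_0}$, but $L'\neq L$. Since both are asymptotic to the same (non-minimizing) union of planes, by the asymptotic analysis for Lagrangian self-expanders (decay of the Lagrangian angle $\theta$ toward the constant values on $P_1$ and $P_2$, and graphical convergence of $L,L'$ to the cone at infinity with polynomial rate) the two expanders are in fact uniformly close in $C^2_{\mathrm{loc}}$ on all of $\C^n$ once $R_0$ is taken large and $\varepsilon$ small; the hypothesis that neither $P_1+P_2$ nor $P_1-P_2$ is area-minimizing is what rules out the degenerate asymptotics and forces the correct ``Lawlor-neck type'' behaviour. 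In particular $L'$ can be written as a graph over $L$ of a one-form $\alpha$ (closed, since both are Lagrangian and zero-Maslov; in fact $\alpha = df$ for a globally defined potential $f$), with $f$ small in $C^2$ everywhere and decaying at infinity.

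Next I would derive the equation satisfied by $f$. The self-expander condition is $H = \tfrac12 x^\perp$, equivalently $\theta = \tfrac12\langle x, J x\rangle|_L$ up to the usual identification, and linearising the difference of the self-expander equations for $L$ and $L' = \mathrm{graph}(df)$ gives a second-order, linear, elliptic equation for $f$ of the schematic form
\[
\Delta_L f - \tfrac12 \langle x^{\top}, \nabla f\rangle + \langle b, \nabla f\rangle + c\, f = 0,
\]
where $b$ and $c$ are smooth coefficients built from the second fundamental form of $L$ and the ambient position vector, and all error terms quadratic in $f$ and its derivatives have been absorbed (this uses the smallness of $f$). The drift term $-\tfrac12\langle x^{\top},\nabla f\rangle$ is exactly the one making this the Euler–Lagrange-type operator associated with the Gaussian-type weighted measure $e^{|x|^2/4}$ adapted to expanders; it is benign for the arguments below.

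The heart of the proof is then a strong unique continuation statement: a solution $f$ of the above equation that decays fast enough at infinity — faster than any polynomial, or more precisely $f = O(e^{-\delta |x|^2})$ in an integral sense, which is what the asymptotic-to-a-cone hypothesis should yield — must vanish identically. I would obtain this by a Carleman-type / frequency-function argument on the ends of $L$: on the two ends, $L$ is asymptotically the plane $P_i$ with the self-expander drift, so the operator is asymptotically the Ornstein–Uhlenbeck operator $\Delta - \tfrac12 x\cdot\nabla$ on $\R^n$, whose decaying solutions (decaying faster than the first exponential eigenfunction) are forced to be zero by a monotonicity formula for $\int_{L\cap B_r} |f|^2 e^{|x|^2/4}$ or by Almgren's frequency function adapted to the weight. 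Combined with the classical strong unique continuation for elliptic equations in the interior (which upgrades vanishing on an end to vanishing everywhere on the connected manifold $L \cong S^{n-1}\times\R$), this gives $f\equiv 0$, hence $L' = L$, the desired contradiction.

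The main obstacle I expect is the asymptotic step: one must show that the abstract Definition~\ref{asympdfn} of being asymptotic to $P_1+P_2$ (which is only a measure-theoretic/varifold statement as $t\to0$, i.e.\ at the \emph{cone} end) actually forces \emph{smooth, rapid} convergence of $L$ to the planes $P_1,P_2$ at \emph{spatial} infinity with the sharp $e^{-|x|^2/4}$-type rate needed to feed the frequency-function argument — and, crucially, to do this \emph{uniformly} over the family of expanders that are $\varepsilon$-close to $L$ in $B_{R_0}$, so that the comparison one-form $f$ is genuinely globally small and globally fast-decaying. This is where the non-area-minimizing hypotheses on $P_1\pm P_2$ enter decisively (ruling out slowly-decaying or oscillatory asymptotics, e.g.\ by an eigenvalue gap for the Jacobi operator on the link of the cone), and where most of the technical work lies; the unique continuation itself, once the decay is in hand, is comparatively standard.
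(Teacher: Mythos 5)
Your overall architecture --- promote the local $C^2$-closeness to global closeness with Gaussian decay, write the competitor as the graph of an exact potential $f$ over $L$, and then kill $f$ by a linear argument --- matches the paper's skeleton in its first half, but the second half has a genuine gap, and one of your claims about where the hypotheses enter is wrong.

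First, the non-area-minimizing hypothesis on $P_1\pm P_2$ plays no role in the asymptotic step. The Gaussian decay $\|\psi\|_{C^k(L_0\setminus B_R)}=O(e^{-bR^2})$ holds for \emph{any} transverse pair of Lagrangian planes: it comes from the drift structure of the self-expander equation, via the differential inequality $\mathcal L(|\mathbf y|^2)\ge 0$ for the graph function together with the explicit barrier $\rho=e^{-|x|^2/2}$ satisfying $\mathcal L(\rho)\le -(n+2)\rho$, where $\mathcal L(\phi)=\Delta\phi+\langle \mathbf x,\nabla\phi\rangle-2\phi$. There is no eigenvalue gap for a Jacobi operator on the link involved, and no "degenerate slowly-decaying asymptotics" to rule out; your intuition here is imported from minimal-surface/special-Lagrangian theory, where rates are indeed governed by the link, but the expander drift forces Gaussian decay unconditionally. (The non-minimizing hypothesis is needed for the global statement, Theorem B, where special Lagrangian limits must be excluded in the compactness argument; it is not what makes local uniqueness work.)

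Second, and more seriously, the step that actually proves uniqueness --- your "strong unique continuation" for decaying solutions of the linearised equation --- is asserted rather than proved, and the tools you name do not obviously apply. Strong unique continuation is a statement about vanishing to infinite order at a point; what you need is a Liouville-type injectivity statement at infinity for a drift operator, and neither a Carleman estimate nor an Almgren frequency function adapted to the weight is set up (note also that the natural weight for the drift $+\langle\mathbf x,\nabla\cdot\rangle$ is $e^{|x|^2/2}$, not $e^{|x|^2/4}$). Moreover your perturbed equation has lower-order coefficients $b,c$ of no particular sign, and even a second-order perturbation (the nonlinearity depends on $\nabla^2 f$), so any argument relying on the special structure of the Ornstein--Uhlenbeck operator must be quantified. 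What actually closes the argument --- and what the paper does --- is elementary by comparison: a direct integration by parts using $\operatorname{div}\mathbf x^\top=n+|\mathbf x^\perp|^2$ gives
\[
-\int_L\mathcal L(\phi)\,\phi\,\d\mathcal H^n=\int_L|\nabla\phi|^2+\Bigl(2+\tfrac n2+\tfrac{|\mathbf x^\perp|^2}{2}\Bigr)\phi^2\,\d\mathcal H^n ,
\]
so $\mathcal L$ is coercive, and in fact an isomorphism $H^2_*(L)\to L^2(L)$ (surjectivity via the adjoint and a growth argument). Local uniqueness is then the uniqueness clause of the Inverse/Implicit Function Theorem applied to the scalar map $F(\phi)=\beta^\phi+\theta^\phi$, whose linearisation at $0$ is exactly $\mathcal L$; equivalently, the quadratic error $Q(x,\nabla f,\nabla^2 f)$ is absorbed by the coercivity once $\|f\|_{C^2}$ is globally small. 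Your framework could be repaired by replacing the Carleman/frequency step with this coercivity estimate, but as written the central analytic step of the proof is missing.
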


{The idea to prove Theorem A is classical. We show that the linearization of the self-expander equation {defines a Banach space isomorphism} %is  a bijective Fredholm operator 
and then we apply {the} Inverse Function Theorem to obtain local uniqueness.}

When $n=2$ we improve this result and show  global uniqueness.

\begin{thmb}
	Assume that neither $P_1+P_2$ nor $P_1-P_2$ are area-minimizing.
	
	Smooth zero-Maslov class Lagrangian self-expanders asymptotic to $P_1+P_2$ are unique and thus coincide with one of the examples found by  Joyce--Lee--Tsui.
\end{thmb}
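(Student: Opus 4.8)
The strategy is to combine the local uniqueness of Theorem A with a compactness statement for the space of such self-expanders. Let $\M$ denote the set of smooth zero-Maslov class Lagrangian self-expanders in $\C^2$ asymptotic to $P_1+P_2$, topologised by smooth convergence on compact sets together with control of the asymptotics. Applying Theorem A at each $L\in\M$ shows that $\M$ is discrete. If one can also show that $\M$ is compact (it is nonempty by Joyce--Lee--Tsui), then $\M$ is finite, and a continuity argument in the pair of planes will force $|\M|=1$; since Joyce--Lee--Tsui produce exactly one such expander, uniqueness follows and $L$ must be that example.

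The first step is to establish a priori estimates for elements of $\M$ that depend only on $P_1$ and $P_2$. Using the self-expander equation, Huisken's monotonicity formula, and the structure theory for zero-Maslov Lagrangians in $\C^2$ — in particular the fact that static blow-up limits and tangent cones at infinity are finite unions of Lagrangian planes, and that the Lagrangian angle is bounded with values trapped by those of $P_1$ and $P_2$ — one shows that each $L\in\M$ is properly embedded, diffeomorphic to $S^1\times\R$, and that outside a ball $B_{R_1}$ whose radius depends only on $(P_1,P_2)$ it is a graph over $P_1\cup P_2$ with Gaussian decay, with all covariant derivatives of the second fundamental form bounded uniformly over $\M$. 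These are essentially the estimates already needed to state Theorem A, upgraded to hold uniformly.

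The key step is compactness of $\M$. Given $L_i\in\M$, the uniform estimates give a subsequential limit which is again a self-expander asymptotic to $P_1+P_2$; failure to converge within $\M$ means that a scale degenerates, so that part of the surface is lost or bubbles off after rescaling. Any rescaling limit is again a zero-Maslov Lagrangian self-expander; it is either asymptotic to a single Lagrangian plane — in which case a Liouville/stability argument forces it to be that plane, contradicting the loss of the neck once one tracks an area (or index) lower bound for the neck region — or its tangent cone at infinity is a nontrivial special Lagrangian or area-minimizing cone assembled from $P_1,P_2,P_1\pm P_2$, which is excluded by the hypothesis that neither $P_1+P_2$ nor $P_1-P_2$ is area-minimizing via a calibration comparison. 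Hence no degeneration occurs, $\M$ is compact, and being discrete it is finite. This is the main obstacle: ruling out that the neck of a degenerating sequence either pinches or escapes to infinity, which is precisely where both non-area-minimizing hypotheses enter and where the structure theory of zero-Maslov Lagrangians in $\C^2$ must be combined with a limit-stable quantitative neck bound.

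Finally, to pin down $|\M|=1$, I would run a continuity argument in the pair of planes. Modulo $\U(2)$ one reduces to $P_1=\R^2$ and $P_2$ diagonal with a pair of angles, and the admissible configurations (transverse, with neither $P_1+P_2$ nor $P_1-P_2$ area-minimizing) form a connected region in angle space, cut out by Lawlor's criterion, and one can deform within it to a rotationally symmetric configuration. The linearized isomorphism underlying Theorem A together with the implicit function theorem shows that each $L\in\M(P_1,P_2)$ persists and stays isolated under small perturbations of $(P_1,P_2)$, so $|\M(P_1,P_2)|$ is locally constant; the Joyce--Lee--Tsui examples give $|\M(P_1,P_2)|\ge 1$ throughout. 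At the rotationally symmetric configuration the self-expander equation reduces to an ODE whose relevant solution is unique (as in the work of Anciaux and Lee), so $|\M|=1$ there, and by local constancy and connectedness $|\M(P_1,P_2)|=1$ for every admissible pair. Therefore $L$ coincides with the Joyce--Lee--Tsui expander asymptotic to $P_1+P_2$.
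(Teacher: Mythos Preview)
Your overall architecture matches the paper's: local uniqueness from Theorem A plus compactness makes the projection from self-expanders to asymptotic planes a covering map over a path in $\Lambda$, and uniqueness at an equivariant endpoint forces the covering to have one sheet. But two of the load-bearing steps are not as you describe them.

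\textbf{Compactness.} Your sketch frames the issue as neck-pinching or bubbling and invokes vague ``Liouville/stability'' and ``neck bounds''. The paper's mechanism is quite different and is where the restriction $n=2$ and the non-area-minimizing hypothesis really enter. One takes a weak (Brakke) limit $L$ of the sequence; the first point is that $L$ cannot be stationary, since a stationary self-expander is a cone, hence equals $P_1+P_2$, and one then shows (by tracking $\beta$ and $\theta$ via a Poincar\'e-type argument) that this cone would be special Lagrangian and therefore area-minimizing, contradicting $(P_1,P_2)\notin SL$. The second point is a Gaussian density bound $\Theta_{1/2}(y,l)\le 1+\varepsilon_0$ for small $l$: if this failed, a blow-up would produce a non-planar special Lagrangian whose blow-down is a special Lagrangian cone in $\C^2$; such cones are unions of planes, hence have density $\ge 2$, which contradicts the strict bound $\Theta<2$ obtained from the non-stationarity of $L$. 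White's regularity then gives uniform $C^{2,\alpha}$ control. Your ``rescaling limits are again self-expanders'' is not right (small-scale blow-ups are stationary, not self-expanding), and you never isolate the $\C^2$-specific fact that forces the density dichotomy.

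\textbf{The equivariant endpoint.} You write that at the rotationally symmetric configuration ``the self-expander equation reduces to an ODE''. It does not, unless you already know the self-expander is equivariant. That is the content of Lemma \ref{anciaux}: with $\mu=x_1y_2-x_2y_1$, one has $\frac{\d}{\d t}\mu^2=\Delta\mu^2-2|\nabla\mu|^2$ along the flow, so Huisken's monotonicity and the vanishing of $\mu$ on the equivariant planes force $\mu\equiv 0$ on $L$; only then does Anciaux's ODE uniqueness apply. Without this step, $\pi^{-1}(1)$ is not known to be a single point and the covering argument does not close.
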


\begin{rmk}
It is known that special Lagrangians in $\C^2$ which are asymptotic to a pair of planes are unique modulo scaling and rigid motions. This uses the fact that, after a hyperk\"ahler rotation of  the complex structure, special Lagrangian surfaces become holomorphic curves. No similar characterization holds for Lagrangian self-expanders and hence the need for a different idea in Theorem B.

Moreover, without the smoothness assumption the uniqueness statement does not hold, as can be seen in \cite{nakahara}.
\end{rmk}

{We now briefly describe the idea behind the proof of Theorem B. 

The key result is to show Theorem \ref{compactness}, which says that the set of self-expanders in $\C^2$ which are asymptotic to a pair of transverse planes is compact. Assuming this result the idea, given a pair of planes $P_1,P_2$ as in Theorem B, {is to} deform $P_2$ into a plane $Q_2$ so that the Lagrangian angle remains constant and  $P_1,Q_2$ become {\em equivariant}, i.e., share the same $S^1$-symmetry.  From Theorem A we  can accompany  the deformation of the planes with a (local) deformation of {any} self-expander $L$ {asymptotic to $P_1+P_2$}. Theorem \ref{compactness} {ensures} that this local deformation can be carried all the way until we obtain a self-expander $Q$  asymptotic {to}  $P_1+Q_2$. {Since}  $P_1,Q_2$ are equivariant, it is simple  to show that $Q$ is unique (see Lemma \ref{anciaux}) and hence $L$ must have been unique as well.

Roughly speaking, the proof of Theorem \ref{compactness} rests on the fact that every non-trivial special Lagrangian cone in $\C^2$ has area-ratios not smaller than two, but the area-ratios of any self-expander as in Theorem B are strictly smaller than two, i.e.,  the area-ratios are too small for a singularity to develop.  
}

\vskip0.1in
{\bf Organization:} in Section \ref{basic} we introduce the basic concepts. 

In Section \ref{exponential.decay} we show that zero-Maslov class self-expanders asymptotic to a transverse intersection of planes have exponential decay outside a compact set.

In Section \ref{section.fredholm} we develop the Fredholm theory for the linearization of the self-expander equation.

In Section \ref{local.section} we show that zero-Maslov class self-expanders in $\C^n$ which are asymptotic to a transverse intersection of a non-area-minimizing pair of planes are locally unique. This implies Theorem A.

In Section \ref{compactness.section} we show that, given a compact set of {transversely} intersecting non-area-minimizing pairs of planes in $\C^2$, the family of zero-Maslov class self-expanders in $\C^2$ asymptotic to those pairs of planes is also compact.

In Section \ref{uniqueness.section} we use the work of the previous section and Section \ref{section.fredholm} to show global uniqueness for zero-Maslov class self-expanders in $\C^2$ which are asymptotic to a transverse intersection of a {non-}area-minimizing pair of planes. This proves Theorem B. 

\vskip0.1in
{\bf Acknowledgements:}  Both authors would like to thank Dominic Joyce for comments on an earlier version of this paper.

\section{Basic theory and notation}\label{basic}

Consider $\C^n$  endowed with its usual complex coordinates $z_j=x_j+iy_j$, for $j=1,\ldots,n$, complex structure $J$, 
 K\"ahler form $\omega=\sum_{j=1}^n\d x_j\w\d y_j$ and holomorphic volume form $\Omega=\d z_1\w\ldots\w\d z_n$.  
 Observe that the Liouville form $\lambda=\sum_{j=1}^n(x_j\d y_j-y_j\d x_j)$ satisfies $\d\lambda=2\omega$.

Let $L$ be a connected \emph{Lagrangian} in $\C^n$; 
that is, $L$ is a (real) $n$-dimensional submanifold of $\C^n$ such that $\omega|_L\equiv 0$.  
Let $\bfx$ denote the position vector on $L$, let $\nabla$ be the (induced) Levi-Civita connection on $L$ and let $H$ be the mean curvature vector on $L$.  Standard  Euclidean differentiation is denoted by $\overline\nabla$.

Notice that $\lambda$ is trivially a closed 1-form on $L$. 
We say that $L$ is \emph{exact} if there exists 
$\beta\in C^{\infty}(L)$ such that $\d\beta=\lambda|_L$.

Since $\Omega|_L$ is a unit complex multiple of the volume form at each point
 on $L$, we may define the \emph{Lagrangian  angle} $\theta$ on $L$ by the
  formula $\Omega|_L=e^{i\theta}\vol_L$.  We also have the relation $H=J\nabla\theta$
   (c.f.~\cite[Lemma 2.1]{ThomasYau}).
The Maslov class on $L$ is defined by the cohomology class of $\d\theta$, so $L$ 
 has \emph{zero-Maslov class} if $\theta$ is a single-valued function.

Observe that, since $T\C^n|_L=TL\oplus NL$, we may decompose any vector $\bfv$ on $L$ into tangential and normal components, denoted 
  $\bfv^\top$ and $\bfv^\bot$ respectively.

\begin{dfn}\label{selfexpdfn}  
  We say that $L$ is a \emph{self-expander} if $H=\kappa\bfx^{\bot}$ for
   some $\kappa>0$.  By rescaling $L$ we may assume that $\kappa=1$.  
 \end{dfn}
The importance of self-expanders $L$ with $H=\bfx^{\bot}$ is that $L_t=\sqrt{2t}L$ for $t>0$ solves mean curvature flow.  

We have the following basic properties of self-expanders.

\begin{lem}\label{selfexplem1}
\begin{itemize}\item[]
\item[(i)] Lagrangian self-expanders with zero-Maslov class are exact.
\item[(ii)] Let $L$ be a zero-Maslov class self-expander. Then $L$ 
is a self-expander with $H=\bfx^{\perp}$ 
if and only if $\beta+\theta$ is constant.
\end{itemize}
\end{lem}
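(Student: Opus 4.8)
The plan is to prove both statements via the standard first variation identities for the relevant weighted volume functionals. For part (i), the key point is that an exact Lagrangian is characterized by the vanishing of the cohomology class $[\lambda|_L]\in H^1(L;\R)$, and I want to show this class must vanish for a zero-Maslov class self-expander. First I would recall the identity relating $\lambda|_L$ and $\d\theta$: since $H=J\nabla\theta$ and, for a Lagrangian, contraction of $\omega$ with the position vector gives $\iota_{\bfx^\perp}\omega|_L = $ (up to sign and a factor) the one-form dual to $\bfx^\top$, one has an identity of the form $\lambda|_L = 2\,\langle \bfx^\top,\cdot\rangle_{\flat}$ — more precisely $\lambda|_L$ is the one-form $\bfv\mapsto \langle J\bfx,\bfv\rangle = \omega(\bfx,\bfv)$. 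Using $H=\bfx^\perp$ we get $J\bfx = J\bfx^\top + JH = J\bfx^\top + \nabla\theta$, so on $TL$ the one-form $\langle J\bfx,\cdot\rangle$ differs from $\d\theta$ by the one-form $\langle J\bfx^\top,\cdot\rangle$. The latter is exact: since $L$ is Lagrangian, $J\bfx^\top$ is normal, and in fact $\langle J\bfx^\top,\bfv\rangle = \omega(\bfx^\top,\bfv) = \omega(\bfx,\bfv)$ as well — so I need to be a little careful and instead argue directly that $\lambda|_L = \d\left(\tfrac12|\bfx|^2 \text{ ... }\right)$ is not literally true, but rather that $\lambda|_L = \d\beta$ with $\d\beta + \d\theta$ controlled. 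The cleanest route: compute $\d(\beta+\theta)$ directly. We have $\d\theta(\bfv) = \langle JH,\bfv\rangle = \langle J\bfx^\perp,\bfv\rangle = \omega(\bfx^\perp,\bfv) = \omega(\bfx,\bfv) = \tfrac12\d\lambda(\bfx,\bfv)$... I would instead use the known local formula $\lambda|_L = \d\left(\bfx\cdot J(\,\cdot\,)\right)$-type primitive combined with $L_\bfx\lambda = \lambda + \d(\iota_\bfx\lambda)$ and $\iota_\bfx\lambda = |\bfx|^2$ (since $\lambda(\bfx) = \sum(x_j y_j - y_j x_j)=0$, actually $\iota_\bfx\lambda=0$), giving that $\lambda|_L$ is closed and representing the Maslov-type obstruction; zero-Maslov then forces exactness by a direct potential construction. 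I expect the bookkeeping with signs and the factor of $2$ in $\d\lambda=2\omega$ to be the fiddly part here, but no conceptual obstacle.

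For part (ii), I would argue as follows. Given the zero-Maslov self-expander $L$ with $H = \kappa\,\bfx^\perp$ for some $\kappa>0$ (after rescaling $\kappa=1$), and with $\lambda|_L = \d\beta$, compute $\d(\beta+\theta)$ on $TL$. For any tangent vector $\bfv$, $\d\beta(\bfv) = \lambda(\bfv) = \omega(2^{-1}\cdot\text{position-related}) $ — concretely $\lambda(\bfv) = \langle J\bfx, \bfv\rangle$ pulled back, wait: $\lambda = \sum(x_j\d y_j - y_j\d x_j)$ so $\lambda(\bfv) = \langle \bfx, J\bfv\rangle = -\langle J\bfx,\bfv\rangle = \omega(\bfv,\bfx)$. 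Meanwhile $\d\theta(\bfv) = \langle \nabla\theta,\bfv\rangle = \langle -JH,\bfv\rangle = -\langle J\bfx^\perp,\bfv\rangle = -\langle J\bfx,\bfv\rangle = \omega(\bfv,\bfx) = \lambda(\bfv)$, using that $J\bfx^\top$ is tangent (Lagrangian!) hence $\langle J\bfx^\top,\bfv\rangle$ for $\bfv$ tangent is $\omega(\bfx^\top,\bfv)=0$ by the Lagrangian condition. Wait — that would give $\d(\beta-\theta)=0$, so I must track the sign conventions in $H=J\nabla\theta$ versus $\nabla\theta = -JH$; with $H = J\nabla\theta$ one has $\nabla\theta = -JH$ only if $J^2=-1$ applied correctly, yes $\nabla\theta = -JH$. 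So $\d\theta(\bfv) = \langle-JH,\bfv\rangle$. With $H=\bfx^\perp$: $\d\theta(\bfv) = -\langle J\bfx^\perp,\bfv\rangle$. Now $J\bfx = J\bfx^\top + J\bfx^\perp$ and $J\bfx^\top \in TL$ (Lagrangian), so $-\langle J\bfx^\perp,\bfv\rangle = -\langle J\bfx,\bfv\rangle + \langle J\bfx^\top,\bfv\rangle = \lambda(\bfv) + 0$. Hence $\d\theta = \lambda|_L = \d\beta$ when $\kappa=1$, i.e. $\d(\beta-\theta)=0$, so $\beta-\theta$ is constant — and replacing $\beta$ by $-\beta$ if the sign convention for the primitive is opposite, one gets $\beta+\theta$ constant. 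Conversely, if $\beta+\theta$ (or $\beta-\theta$) is constant then $\d\theta = \pm\lambda|_L$, which unwinds via the same computation to $-\langle J\bfx^\perp,\bfv\rangle = \pm\lambda(\bfv) = \mp\langle J\bfx,\bfv\rangle = \mp\langle J\bfx^\perp,\bfv\rangle$ for all tangent $\bfv$; combined with the general relation $H=\kappa\bfx^\perp$ for \emph{some} $\kappa$ coming from the self-expander hypothesis, this pins down $\kappa=1$.

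The main obstacle, such as it is, will be getting all sign and normalization conventions ($\d\lambda = 2\omega$, the sign in $H=J\nabla\theta$, the orientation of $\lambda$) to line up so that the constant is literally $\beta+\theta$ rather than $\beta-\theta$ or $\beta+2\theta$; this is purely bookkeeping. A secondary point worth stating cleanly is that in the "only if" direction one should assume $L$ is already known to be a self-expander in the sense of Definition \ref{selfexpdfn} (so $H=\kappa\bfx^\perp$ for \emph{some} $\kappa>0$) and the content is that the constancy of $\beta+\theta$ upgrades this to $\kappa=1$; I would phrase the statement and proof of (ii) to make this logical structure explicit.
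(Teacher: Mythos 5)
Your argument is, at its core, the paper's own one-line proof: from $H=J\nabla\theta$ and $H=\bfx^{\bot}$ one gets $\nabla\theta=-J\bfx^{\bot}=-(J\bfx)^{\top}$, and $(J\bfx)^{\top}$ is precisely the tangent vector field dual to $\lambda|_L$, so $\lambda|_L=-\d\theta$; since $\theta$ is single-valued (zero-Maslov), this gives exactness, i.e.\ (i), and $\beta=-\theta+\mathrm{const}$, i.e.\ (ii). Two slips should be fixed rather than deferred to ``bookkeeping.'' First, with the paper's conventions $\lambda(\bfv)=\sum_j(x_jv_{y_j}-y_jv_{x_j})=\langle J\bfx,\bfv\rangle=-\langle\bfx,J\bfv\rangle$, not $+\langle\bfx,J\bfv\rangle$; with the correct sign your computation lands on $\d(\beta+\theta)=0$ directly, and you cannot instead ``replace $\beta$ by $-\beta$'' because the normalisation $\d\beta=\lambda|_L$ is fixed by the statement. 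Second, on a Lagrangian $J$ interchanges $TL$ and $NL$, so $J\bfx^{\top}$ is \emph{normal}, not tangent as you assert; your conclusion $\langle J\bfx^{\top},\bfv\rangle=0$ for tangent $\bfv$ is nevertheless correct (it equals $\omega(\bfx^{\top},\bfv)=0$ since $\omega|_L=0$). Finally, the meandering discussion under (i) is unnecessary: (i) is an immediate corollary of the identity $\lambda|_L=-\d\theta$ that you establish in the course of (ii), because the zero-Maslov hypothesis makes $\theta$ a genuine function and hence $\d\theta$ exact.
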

\begin{proof}Let $L$ satisfy $H=\bfx^{\perp}$.  
Since $H=J\nabla\theta$, one sees that $$\nabla\theta=-J(\bfx^{\bot})=-(J\bfx)^\top,$$ so $\lambda|_L=-\d\theta$. This proves the first property.

To prove the second property note that 
$$H-\bfx^\bot =0\iff \nabla \theta+J \bfx^\bot=0\iff \nabla \theta+(J \bfx)^\top=0\iff\nabla(\theta+\beta)=0.$$
\end{proof}

 Let $P_1$, $P_2$ be two Lagrangian planes intersecting transversely. From \cite[Section 7.2]{ilmanen}, there exists a constant $C_0$  such that whenever  {a} self-expander $L$ is asymptotic to $P_1+P_2$ then
\begin{equation}\label{area.bound0}
\H^n\big(L\cap B_R\big)\leq C_0R^n\quad\mbox{for all } R>0,
\end{equation}
where $B_R$ will always denote $B_R(0)$, the ball of radius $R$ about $0$ in $\C^n$.

In this paper, all self-expanders $L$ we consider have the following properties:
\begin{itemize}
\item $L$ is Lagrangian with zero-Maslov class;
\item $L$ has  $H={\bf x}^{\bot}$;
\item $L$ is asymptotic  to $L_0=P_1+P_2$, where $P_1,P_2$ are {transversely} intersecting Lagrangian planes.
\end{itemize}
 We abuse notation and often identity the varifold $L_0=P_1+P_2$ with its support $L_0=P_1\cup P_2$.

A %n extremely 
key tool in studying self-expanders is the backwards {heat kernel}.
\begin{dfn}\label{densitydfn}  
Given any $(x_0,l)$ in $\C^{n}\times \R$, we consider the backwards heat kernel
\begin{equation}\label{heat}
\Phi(x_0,l)(x,t)=\frac{\exp\left(-\frac{|x-x_0|^2}{4(l-t)}\right)}{(4\pi(l-t))^{n/2}}\,.
\end{equation}
\end{dfn}

Given a solution $(L_t)_{t> 0}$ to mean curvature flow and $x_0\in\C^n$, $l>0$, we consider
 \begin{equation}\label{gaussian}
 \Theta_t(x_0,l)=\int_{L_t}\Phi(x_0,l) \d\H^n.
 \end{equation} 
 Note that when $L_t=\sqrt{2t}L$, where $L$ is a self-expander, we have that $\Theta_t(x_0,l)$  is finite due to \eqref{area.bound0} (see \cite[Lemma C.3]{eckernotes}).  
 
 Definition \ref{asympdfn} implies that for all $x_0\in\C^n$ and $l>0$
 \begin{equation}\label{gaussian.zero}
\lim_{t\to 0} \Theta_t(x_0,l)=\int_{L_0} \Phi(x_0,l) \d\H^n=\Theta_0(x_0,l).
 \end{equation}
Moreover, we have from Huisken's monotonicity formula \cite{Huisken} that
\begin{equation}\label{Thetadec}
\Theta_t(x_0,l)\leq \Theta_0(x_0,l+t)\quad\mbox{for all }x_0\in\C^n,\,t> 0,\, l>0.
\end{equation}

We conclude this section with the following observation. Given $P_1, P_2$ transverse planes we have
\begin{equation}\label{lessthantwo}
\Theta_{0}(x_0,l)=\int_{P_1+P_2}\Phi(x_0,l) \d\H^n<2\quad\mbox{for all }l>0
\end{equation}
unless $x_0=0$. One consequence of this observation  is the following.
\begin{lem}\label{embedlem}
\begin{itemize}\item[]
\item The self-expander $L$ is embedded.
\item There is $c_1$ depending only on $L_0$ 
 so that
 $$ \H^n\big(L\cap B_r(x)\big)\leq c_1r^n\quad\mbox{for all }r>0\mbox{ and }x\in\C^n.$$
\end{itemize}
\end{lem}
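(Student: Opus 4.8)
The plan is to derive both conclusions from the monotonicity estimate \eqref{Thetadec} together with the strict bound \eqref{lessthantwo}, using the standard density/area-ratio machinery of Brakke and White. First I would establish the uniform area-ratio bound. For $x\in\C^n$ and $r>0$, set $L_t=\sqrt{2t}L$ and pick $l=r^2$ and $x_0=x/\sqrt{2t}$; applying Huisken's monotonicity in the form \eqref{Thetadec} and then letting $t\to 0$ via \eqref{gaussian.zero}, one controls the Gaussian density of $\sqrt{2t}L$ near $x$ by $\Theta_0(x_0,l+t)=\int_{L_0}\Phi(x_0,l+t)\,\d\H^n$. Since $L_0=P_1+P_2$ is a pair of planes, this last quantity is uniformly bounded: indeed $\Theta_0(y,s)\leq 2$ for all $y$ and $s>0$ because each plane contributes exactly $1$ to the Gaussian integral. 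Rescaling back to $L$ and using the elementary comparison between the Gaussian-weighted density and the Euclidean area ratio $\H^n(L\cap B_r(x))/(\omega_n r^n)$ (which holds because the Gaussian weight dominates a fixed multiple of the indicator of $B_r(x)$ at the relevant scale), one gets $\H^n(L\cap B_r(x))\leq c_1 r^n$ with $c_1$ depending only on the dimension and on the bound $2$ — hence only on $L_0$. This is the second bullet.

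For embeddedness, the key point is the strict inequality in \eqref{lessthantwo}: $\Theta_0(x_0,l)<2$ whenever $x_0\neq 0$. Suppose $L$ were not embedded, so two sheets of $L$ meet at some point $p$, or $L$ has a self-intersection; then by the lower-semicontinuity of Gaussian densities under mean curvature flow and White's local regularity theorem, the Gaussian density of the flow $L_t=\sqrt{2t}L$ at the space-time point corresponding to $p$ would be at least $2$ (two sheets each contributing density $\geq 1$, by the monotonicity and the fact that a smooth point has density $1$). Tracking this through the monotonicity inequality \eqref{Thetadec} back to time $0$ forces $\Theta_0(x_0,l)\geq 2$ for the corresponding $(x_0,l)$ with $x_0\neq 0$ (the point $p$ is a smooth point of $L$, hence nonzero after rescaling unless $p=0$, and one handles $p=0$ separately using that $L$ is a smooth self-expander so $0$ is at worst a single smooth point), contradicting \eqref{lessthantwo}. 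Thus $L$ is embedded.

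I would carry the steps in this order: (1) record the uniform bound $\Theta_0(y,s)\leq 2$ and the comparison between Gaussian density and Euclidean area ratio; (2) combine with \eqref{Thetadec} and \eqref{gaussian.zero}, rescaling, to prove the area-ratio bound; (3) prove embeddedness by the density-gap argument using the strict inequality \eqref{lessthantwo}. The main obstacle is step (3): one must be careful that a putative self-intersection point of $L$ really does produce a space-time point of the flow $(L_t)_{t>0}$ with Gaussian density $\geq 2$, and that this point corresponds, after the change of variables $x\mapsto x/\sqrt{2t}$, to a genuinely nonzero $x_0$ — this requires noting that $0\in L$ is a single smooth point (since $L$ is a smooth self-expander asymptotic to a transverse pair of planes, it cannot pass through $0$ with multiplicity) so any self-intersection occurs away from the origin, and then invoking the semicontinuity of density and the monotonicity formula to transfer the density lower bound back to time zero.
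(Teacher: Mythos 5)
Your area-ratio argument and the generic part of your embeddedness argument follow the paper's route: bound the Euclidean area ratio by the Gaussian density $\Theta_{1/2}(x,r^2)$ of $L=L_{1/2}$, push back to time zero with \eqref{Thetadec}, and use that $\Theta_0\le 2$ for a pair of planes; and for embeddedness, note that a self-intersection point $x_0$ forces $\lim_{\delta\to 0}\Theta_{1/2}(x_0,\delta)\ge 2$, hence $\Theta_0(x_0,1/2)\ge 2$, which by \eqref{lessthantwo} is impossible unless $x_0=0$. Up to that point your proposal is essentially the paper's proof.

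The gap is in how you dispose of the case $x_0=0$. You assert that a self-intersection cannot occur at the origin ``since $L$ is a smooth self-expander asymptotic to a transverse pair of planes, it cannot pass through $0$ with multiplicity.'' That is unjustified and essentially circular: smoothness here means $L$ is a smooth \emph{immersed} Lagrangian, and nothing a priori prevents two sheets of such an immersion from crossing transversely at $0$ (the asymptotic cone $P_1\cup P_2$ itself does exactly this). The strict inequality \eqref{lessthantwo} fails precisely at $x_0=0$, where $\Theta_0(0,l)=2$, so the density-gap argument gives no contradiction there. The paper closes this case differently: if $x_0=0$ then $2\le\lim_{\delta\to 0}\Theta_{1/2}(0,\delta)\le\Theta_0(0,1/2)=2$, so equality holds in Huisken's monotonicity formula; the rigidity case of that formula forces $L$ to be simultaneously a self-shrinker and a self-expander, hence $H=\bfx^\perp=0$, hence a minimal cone, and being asymptotic to $L_0$ it must equal $L_0$, contradicting smoothness. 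Without this rigidity step (or some substitute argument ruling out a double point at the origin), your proof of the first bullet is incomplete.
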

\begin{proof}
Suppose that $L$ is immersed.  Then there exists $x_0\in L$ where
$$\lim_{\delta\to 0}\Theta_{\frac 1 2}(x_0,\delta)\geq 2.$$
 By \eq{Thetadec}, making $t=\frac 1 2$ and $l\to 0$, we obtain 
 $$ 2\leq \Theta_0(x_0,1/2)$$
 and so, by  \eqref{lessthantwo}, $x_0=0$ and equality holds in the equation above. In this case,
 $$2\leq \lim_{\delta\to 0}\Theta_{\frac 1 2}(0,\delta)\leq \Theta_0(x_0,1/2)=2$$
 and so equality holds in Huisken's monotonicity formula.  Hence $L$ is also a self-shrinker (i.e. $H=-\bfx^{\perp}$)  and  thus $H=\bfx^{\perp}=0$ as $L$ is a self-expander. Therefore, $L$ must be a
 cone but, because $L$ is asymptotic to $L_0$, this is only possible if $L=L_0$, which then contradicts the assumption that $L$ is smooth. This proves the first property.
 
  In what follows $c$ denotes a constant depending only on  $n$.   By \eq{Thetadec} and \eqref{lessthantwo}
 $$  \frac{\H^n\big(L\cap B_r(x)\big)}{r^n}\leq c\Theta_{1/2}(x,r^2)\leq c\Theta_0(x,r^2+1/2)\leq 2c.$$
\end{proof}
\section{Exponential decay}\label{exponential.decay}

In this section we show that the self-expander $L$ converges exponentially fast to $L_0$ outside a compact set.  {This naturally coincides with the behaviour of the relevant self-expanders in \cite{jlt}, 
but is in marked contrast to special Lagrangian Lawlor necks which only converge at rate $O(r^{1-n})$ to their asymptotic planes.}

Let $G_L(n,\C^n)$ denote the set of all Lagrangian planes in $\C^n$. Consider the open subset of $G_L(n,\C^n)\times G_L(n,\C^n)$ given by
$$G_n=\{(P_1,P_2)\in G_L(n,\C^n)\times G_L(n,\C^n)\, |\, P_1\cap P_2=\{0\}\}.$$ 
Given a compact set $K\subset G_n$, we denote by $\mathcal{S}(K)$ the set of all self-expanders which are asymptotic to $L_0=P_1+P_2$, with $(P_1,P_2)\in K$.

\begin{thm}\label{thm.asympt}
For every compact set $K\subset G_n$ and $k\in\N$, there is $R_1>0$, $C$ and $b$ so that for all $L\in{\mathcal S}(K)$ we find $\psi \in C^{\infty}(L_0\setminus B_{R_1})$ satisfying
$$L\setminus B_{2R_1}\subset\{x+J\overline\nabla \psi(x)\,|\, x\in L_0\setminus B_{R_1}\}  \subset L\setminus B_{R_1/2}$$
and 
$$\|\psi\|_{C^{k}(L_0\setminus B_R)}\leq Ce^{-bR^2}\mbox{ for all }R\geq R_1.$$
\end{thm}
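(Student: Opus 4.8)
The plan is to derive, first, a uniform pointwise decay estimate for the distance from $L$ to $L_0$ outside a compact set, and then to bootstrap this to graphicality and exponential decay in all derivatives. The starting observation is \eqref{lessthantwo}: since $P_1,P_2$ meet transversely, for $x_0 \neq 0$ we have $\Theta_0(x_0,l) < 2$, and by compactness of $K$ this can be made quantitative: there is $\delta > 0$ so that $\Theta_0(x_0,l) \leq 2 - \delta$ whenever $|x_0| \geq 1$ (say), uniformly over $(P_1,P_2) \in K$ and $l$ in a bounded range. Combined with \eqref{Thetadec} (with $L_t = \sqrt{2t}L$, $t = \tfrac12$) this gives $\Theta_{1/2}(x_0, l) \leq 2 - \delta$ for all $x_0$ with $|x_0|$ large and all small $l > 0$. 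By Brakke/White local regularity (the $\varepsilon$-regularity theorem for mean curvature flow), a Gaussian density uniformly below the relevant threshold forces uniform curvature bounds for $\sqrt{2t}L$ near $x_0$ at time $t = \tfrac12$ — equivalently, scaling back, $L$ has bounded geometry near any point $x$ with $|x|$ large, with bounds depending only on $K$. Together with the area ratio bound from Lemma \ref{embedlem} and the fact that $L$ is asymptotic to $L_0$ (so the varifolds $\tau L$, $\tau \to 0$, converge to $L_0$), a blow-up/compactness argument shows: for every $\eta > 0$ there is $R_1$ so that $L \setminus B_{R_1}$ lies in an $\eta|x|$-neighbourhood of $L_0$ and is locally a graph over $L_0$ with small $C^1$ norm. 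This yields the first inclusion claim with the existence of $\psi \in C^\infty(L_0 \setminus B_{R_1})$, small in $C^1$; standard elliptic (or parabolic) Schauder estimates applied to the self-expander equation, rescaled on annuli $B_{2r}\setminus B_{r/2}$, upgrade this to $\|\psi\|_{C^k(B_{2r}\setminus B_r)} \to 0$ as $r \to \infty$, and give the second inclusion $\{x + J\overline\nabla\psi(x)\} \subset L \setminus B_{R_1/2}$.

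The heart of the matter is the \emph{exponential} rate. Here I would write the self-expander equation for the graph of $J\overline\nabla\psi$ over $L_0 \setminus B_{R_1}$. Because $L_0$ is a union of two planes through the origin, on each plane $P_i$ the position vector $\mathbf x$ restricted to $P_i$ is tangential, so the normal part $\mathbf x^\bot$ of the ambient position vector contributes, to leading order in $\psi$, a term linear in $\psi$ with a \emph{large coefficient} proportional to $|x|$: schematically the equation reads
\begin{equation}\label{lineqn}
\Delta_{P_i}\psi - \tfrac12\, x\cdot\overline\nabla\psi - \tfrac12\,\psi \cdot(\text{order }|x|^2 \text{ from }H = \mathbf x^\bot) = Q(\psi,\overline\nabla\psi,\overline\nabla^2\psi),
\end{equation}
where $Q$ is quadratically small. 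The precise linearization (essentially the Ornstein–Uhlenbeck-type operator $\L = \Delta - \tfrac12 x\cdot\overline\nabla$ plus a zeroth-order term coming from differentiating $H - \mathbf x^\bot$, which on the \emph{cone} $L_0$ degenerates in a favorable way) has the property that on the exterior of a large ball its positive part dominates: solutions decay like $e^{-|x|^2/4}$. Concretely I would use a barrier/maximum-principle argument: the function $e^{-b|x|^2}$ for suitable $b \in (0,\tfrac14)$ is a supersolution of the relevant linear operator outside $B_{R_1}$ once $R_1$ is large (the $\tfrac12 x\cdot\overline\nabla$ term beats the Laplacian at large radius), and the quadratic terms $Q$ are absorbed because $\psi$ is already known to be small in $C^2$ from the previous paragraph. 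Comparing $\psi$ (and then, after differentiating the equation, each derivative $\overline\nabla^j\psi$) against $C e^{-b|x|^2}$ on annuli and letting the outer radius go to infinity — using that $\psi \to 0$ there — gives $|\overline\nabla^j\psi(x)| \leq C e^{-b|x|^2}$, hence $\|\psi\|_{C^k(L_0\setminus B_R)} \leq C e^{-bR^2}$. Uniformity of $C$, $b$, $R_1$ over $L \in \mathcal S(K)$ follows since every constant entering (the density gap $\delta$, the area bound $c_1$, the Schauder constants, the barrier radius) depends only on $K$ and $k$.

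The main obstacle I anticipate is making the passage from "small in $C^1$" to the exponential barrier argument genuinely rigorous: one must (i) identify the linearized operator precisely enough to see the crucial sign and size of its zeroth-order term on the exterior region — this is where the hypothesis that $L_0$ is a \emph{cone} (so that on each sheet $P_i$, $H_{P_i} = 0$ and the expander term $\mathbf x^\bot$ is purely "transverse", producing the right-signed large potential) is used — and (ii) control the nonlinearity $Q$, which involves second derivatives of $\psi$, so that a $C^2$ smallness bound from the compactness step is enough to absorb it into the supersolution. A secondary technical point is the matching/patching near the singular set $P_1 \cap P_2 = \{0\}$: since we only work outside $B_{R_1}$ with $R_1$ large this is not an issue, but one should check the two graphical pieces over $P_1 \setminus B_{R_1}$ and $P_2 \setminus B_{R_1}$ are disjoint, which again follows from transversality of the planes together with the $\eta|x|$-closeness. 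Everything else — the White regularity input, the Schauder bootstrap, the blow-down compactness — is by now standard for mean curvature flow.
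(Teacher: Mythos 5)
Your overall architecture coincides with the paper's: first use Gaussian density bounds plus White's regularity theorem to get that $L$ is locally graphical over $L_0$ outside a large ball with $C^{k+1}$ norms tending to zero, then run a maximum-principle comparison against the barrier $e^{-b|x|^2}$ to upgrade this to Gaussian decay. The genuine problem is in the second step: the operator driving the decay is misidentified. Linearizing $H=\bfx^{\bot}$ for the graph of $J\overline\nabla\psi$ over a plane gives, at the level of the potential, $\Delta\psi+\langle x,\overline\nabla\psi\rangle-2\psi=\mathrm{const}$ --- the operator $\mathcal{L}$ of \eqref{Leq} computed on the plane. The drift points \emph{outward}, and there is \emph{no} zeroth-order term of size $|x|^2$: the contribution of $\bfx^{\bot}$ on the graph is $\overline\nabla\big(2\psi-\langle x,\overline\nabla\psi\rangle\big)$, i.e.\ the drift plus the bounded term $-2\psi$. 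With the operator as written in your schematic equation --- inward drift $-\tfrac12 x\cdot\overline\nabla$ and no genuine large potential once the spurious $|x|^2$ term is removed --- the function $e^{-b|x|^2}$ is a \emph{sub}solution at large radius (the drift then contributes $+b|x|^2e^{-b|x|^2}$), and the comparison fails; that inward-drift operator is the shrinker one, whose decaying solutions are only polynomially small. The Gaussian rate for expanders comes entirely from the sign of the drift. The paper avoids both the linearization and the absorption of the nonlinearity $Q$: it works on $L$ itself with the exact differential inequality $\mathcal{L}(|\mathbf{y}|^2)\geq 0$, where $\mathbf{y}$ is the component of the position vector normal to the asymptotic plane, and compares $|\mathbf{y}|^2$ with $\varepsilon+Ce^{-|x|^2/2}$ using $\mathcal{L}(e^{-|x|^2/2})\leq -(n+2)e^{-|x|^2/2}$. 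Either adopt that exact identity, or redo the linearization with the correct signs and then genuinely absorb $Q$ (which is quadratic in $\overline\nabla^2\psi$, so $C^2$-smallness suffices, but this must be checked against the correct operator).

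Two smaller gaps. First, a density bound $\Theta\leq 2-\delta$ is not below White's threshold $1+\varepsilon_0$, so it does not by itself give curvature bounds; what is needed, and what Lemma \ref{lemm.asympt.zero} provides, is that the Gaussian density at points going to infinity tends to a limit $\leq 1$, because far from the origin a transverse pair of planes looks like a single plane (or recedes to infinity). Second, for $n=2$ the region $P_i\setminus B_{R_1}$ is not simply connected, so ``locally the graph of a closed $1$-form'' does not immediately yield a single-valued potential $\psi$; the paper uses exactness of $L$ (Lemma \ref{selfexplem1}) and writes $\psi=\tfrac12\big(\langle X,x\rangle-\beta\circ\pi_L^{-1}\big)$ explicitly. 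Since the main application is in $\C^2$, this point cannot be waved away.
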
 
\begin{proof}

The next proposition says that if $L$ is locally graphical over $L_0\setminus B_{R_0}$  for some $R_0$ and the local graph is asymptotic to zero in the $C^{k+1}$-norm, then we can find $R_1$ large so that $L$ is a global graph over $L_0\setminus B_{R_1}$ and the graph has its $C^k$-norm decaying exponentially fast.
\begin{prop}\label{graphdecay} 
Fix $R_0>2r_0>0$,  $k\in\N$, {a compact set $K\subset G_n$, and a decreasing function ${D_k=D_k(r)}$ tending to zero at infinity.} 

Suppose that for every $L\in{\mathcal S}(K)$ and  $y_0\in  L\setminus B_{R_0}$ we can find $x_0\in {L_0}$ and 
$$\phi:L_0\cap B_{2r_0}(x_0)\rightarrow \C^n$$ so that
\begin{itemize}
\item the $C^{k+1}\big(B_{2r_0}(x_0)\big)$-norm of $\phi$ is bounded by ${D}_k(|y_0|)$;\smallskip
\item $L\cap \hat B_{r_0}(y_0)\subset \{x+J\overline\nabla\phi(x)\,|\,x\in L_0\cap B_{2r_0}(x_0)\}$, {where $\hat B_{r_0}(y_0)$ denotes the intrinsic ball of $L$ with radius $r_0$ and centered at $y_0$}.
\end{itemize}
Then there exist $R_1$, $C,$ $b,$ and an open set $B\subset\C^n$ with compact closure, depending on $r_0,$ $R_0,$ ${D}_k$ and $K$, such that for every $L \in{\mathcal S}(K)$ we can find $\psi\in C^{\infty}(L_0\setminus B_{R_1})$ with 
\begin{equation}\label{graphical}
L\setminus B\subset\{x+J\overline\nabla\psi(x)\,| \,x\in L_0\setminus B_{R_1}\}
\end{equation}
and 
\begin{equation}\label{expdecay}
\|\psi\|_{C^{k}(L_0\setminus B_R)}\leq Ce^{-bR^2}\quad\mbox{for all }R\geq R_1.%\quad\text{as $R\rightarrow\infty$}.
\end{equation}
\end{prop}

\begin{proof}
From the hypotheses of the proposition, for every $L\in {\mathcal S}(K)$  and ${\tilde R_1}$ sufficiently large we can find an open set ${\tilde B}\subset \C^n$ with compact closure and a projection map
$$\pi_L:L\setminus{ \tilde B} \longrightarrow L_0\setminus  B_{{\tilde R_1}}.$$
We claim  we can choose $R_1=R_1(r_0,R_0,C_k)$  so that $\pi_L$ is a diffeomorphism {when restricted to $\pi_L^{-1}({L_0\setminus}B_{R_1})$}.

Suppose not. Then we can find $L^i\in  {\mathcal S}(K)$ with $L^i_0=P^i_1+P^i_2$ tending to $P_1+P_2$, $x_i$ in $L^i_0\setminus B_{R_0}$ such that $|x_i|\rightarrow\infty$ as $i\rightarrow\infty$, and $\pi_{L^i}^{-1}(x_i)\supset\{y_i,z_i\}$ where $y_i\neq z_i$ for all $i$.

  By hypothesis, there exists $\delta_0>0$ such that  $L^i\cap B_{2\delta_0}(x_i)$ contains the graphs of functions $f_i,g_i$ on 
$B_{\delta_0}(x_i)\cap L^i_0$ with $f_i(x_i)=y_i$ and $g_i(x_i)=z_i$ for all $i$.  Therefore, recalling $\Phi$ given in \eq{heat}, Huisken's monotonicity formula implies
\begin{align}\label{multone}
\int_{L_0^i}\Phi(x_i,\delta^2+1/2) \d\mathcal{H}^n & \geq \int_{L^i}\Phi(x_i,\delta^2) \d\mathcal{H}^n\\ \notag
 & \geq \int_{L^i\cap B_{2\delta_0}(x_i)}\Phi(x_i,\delta^2) \d\mathcal{H}^n\\ \notag
&\geq \int_{\text{graph}(f_i)\cup\text{graph}(g_i)}\Phi(x_i,\delta^2) \d\mathcal{H}^n\\ \notag
&\geq 2\int_{B_{\delta_0}(x_i)\cap L^i_0}\Phi(x_i,\delta^2) \d\mathcal{H}^n, \notag
\end{align}
where the last inequality comes from the fact that  $L^i_0$ is a union of planes. 

{Since} $L^i_0$ tends to $P_1+P_2$ with $P_1\cap P_2=\{0\}$ and $|x_i|\rightarrow\infty$, we {have from the hypothesis of the proposition that $L^i_0-x_i$ tends to either $P_1$ or $P_2$. We assume that the first case occurs and so, } 
after translating by $-x_i$,
 $B_{\delta_0}(x_i)\cap L^i_0$ tends to $B_{\delta_0}(0)\cap  P_1$.  Thus, letting $i\rightarrow\infty$ and then $\delta\rightarrow 0$, we obtain from \eqref
{multone} that 
$$ 1=\int_{P_1}\Phi(0,1/2) \d\mathcal{H}^n\geq 2.$$
This proves the claim.

{The fact that} $\pi_L$ is a diffeomorphism implies the existence of a smooth vector field $X$ on $L_0\setminus B_{R_1}$ so that, {for some open set $B$},
$$L\setminus B=\{x+JX(x)\,:\,x\in L_0\setminus B_{R_1}\}.$$

Since $L$ is Lagrangian, it is standard to see that $X=\overline\nabla \psi$ locally.  We need to make sure that $\psi$ is defined globally
on $L_0\setminus B_{R_1}$.  Recall the primitive $\beta$ for the Liouville form $\lambda$ on $L$. 
Set $\bar{\beta}=\beta \circ\pi_L^{-1}$ and 
define $\psi$ on $L_0\setminus B_{R_1}$ by
$$\psi(x)=\frac{1}{2}\left(\langle X(x), x\rangle-\bar \beta(x)\right),$$
using the standard Euclidean inner product $\langle\,,\,\rangle$.
Then,  for every vector $v$  in $L_0$ we have
\begin{align*}
        \langle \overline \nabla \psi(x), v\rangle & =\textstyle\frac{1}{2}\left(\langle \overline\nabla_v X, x\rangle +\langle X(x), v\rangle-\langle \overline\nabla \bar \beta(x), v\rangle \right)\\
        & =\textstyle\frac{1}{2}\big(\langle \overline\nabla_v X, x\rangle +\langle X(x), v\rangle-\lambda(v+J\overline\nabla_v X(x))\big)\\
        &=\textstyle\frac{1}{2}\left(\langle \overline\nabla_v X, x\rangle +\langle X(x), v\rangle-\langle Jx-X(x), v+J\overline\nabla_v X(x)\rangle\right)\\        
&=\langle X(x), v\rangle-\textstyle\frac{1}{2}\langle Jx,v\rangle +\textstyle\frac{1}{2}\langle X(x),J\overline\nabla_v X(x)\rangle.
\end{align*}
 Observe that 
$\langle Jx,v\rangle=0$ since $L_0$ is a {pair of} Lagrangian
 planes and thus $x$ is tangent to $L_0$ and orthogonal to $Jv$.    
Furthermore, $\langle X(x),J\overline\nabla_vX(x)\rangle=0$ because $\overline\nabla_v X$ has no 
component orthogonal to $L_0$ as $L_0$ is a union of planes.  
Thus $\overline\nabla\psi=X$, the decomposition \eq{graphical} holds, and for $R\geq R_1$,
\begin{equation}\label{psi.decay}
\|\nabla \psi\|_{C^{k+1}(L_0\setminus B_{R})}\leq {D}_k(R-1),%\quad\mbox{where }\lim_{r\to\infty}C_k(r)=0.
\end{equation}
where ${D}_k$ is our given decreasing function by hypothesis.

We now wish to prove \eq{expdecay}. {    Let $S_0$ be a connected component of $L_0\setminus\{0\}$,{ i.e., $S_0$ is a Lagrangian plane minus the origin,} and let $S$ be the connected component of $L\setminus B_{R_1}$ asymptotic to
$S_0$ given by the graphical decomposition in \eq{graphical}.  After changing coordinates so that $S_0$ is identified with a real plane, we 
 consider the following vector{-}valued function defined on {$S$:}}
%Recall that $\bfx$ is the position vector on $L$ and set 
$$\bfy ={(J\overline \nabla \psi)}\circ\pi_L={i\sum_{j=1}^ny_j \frac{\partial}{\partial y_j}\in \C^{n}}.$$
Let $\Delta=-\d^*\d$ be the analyst's Laplacian acting on functions on $L$. {For} $j=1,\ldots,n$ we have on $S$ that
$$\Delta y_j^2=2{y_j\left\langle i \frac{\partial}{\partial y_j},H \right\rangle}+2\left| \partial_ {y_j}^{\top}\right|^2\implies \Delta |\y|^2\geq 2\left\langle \y,H \right\rangle.$$
 Calculating with respect to the induced metric on $S$  and recalling that $\bfx$ is the position vector on $L$  we see that, since $L$ is a self-expander,
$$ |\y|^2=\langle \y,\x \rangle=\langle \y,\x^{\bot} \rangle+\langle \y,\x^{\top} \rangle=\langle\y,H\rangle+\langle \y,\x^{\top} \rangle=\langle\y,H\rangle+\frac{1}{2}\langle \nabla |\y|^2,\x \rangle.$$
 Thus, if we define
\begin{equation}\label{Leq}
\mathcal{L}(\phi)=\Delta\phi+\langle\bfx,\nabla\phi\rangle-2\phi
\end{equation}
for suitably differentiable functions $\phi$ on $S$, where all quantities are computed with respect to the induced metric on $S$,
 we see that $$\mathcal{L}(|\bfy|^2)\geq 0.$$
 We are now in the position to construct a barrier for $|\bfy|^2$ and  deduce \eq{expdecay}.

Set $\rho(x)=\exp(-|x|^2/2)$, an ambient function on $\C^n$.  Since $L$ is a self-expander, we have that 
$$\Delta({|x|}^2)=2n+2\langle H,\bfx\rangle=2n+2|\bfx^\bot|^2.$$
Therefore one sees that, on $L$,
$$\Delta\rho=-\frac\rho 2 \Delta({|x|}^2)+\frac\rho 4|\nabla{|x|}^2|^2=\rho(|\bfx^\top|^2-n-|\bfx^\bot|^2).$$
Moreover,
$$\langle\nabla \rho, \x\rangle =-\rho|\x^{\top}|^2$$
and so
\begin{equation}\label{rhoeq} 
\mathcal L(\rho)=%\Delta\rho+\langle\nabla \rho, \x\rangle-2\rho= 
\rho(|\x^{\top}|^2-n-2-|\x|^2)\leq -(n+2)\rho.
\end{equation}

Let $\varepsilon>0$ and using \eqref{psi.decay} choose ${C}={C}(r_0,R_0,{D}_k)$ so that $$|\y|^2={|(\overline \nabla \psi)\circ\pi_L|}^2<{C}\exp(-{|x|}^2/2)\mbox{ on }\partial S.$$
Set $\tilde{\rho}=\varepsilon+{C}\rho$. 
For all $R$ sufficiently large we have $|\y|^2<\tilde \rho$ on $S\cap \partial \{R_1<|x|<R\}$ because $|\y|^2$ tends to zero at 
infinity by hypothesis.  Furthermore, {using} $\mathcal{L}(|\bfy|^2)\geq 0$ and \eqref{rhoeq}, we have 
$$\mathcal L(\tilde \rho-|\y|^2)=\mathcal L(\rho-|\y|^2)-2\varepsilon< 0$$
and thus the Maximum Principle implies that $$|\y|^2<\varepsilon+{C}\exp(-{|x|}^2/2)\mbox{ on }S\cap \{R_1<|x|<R\}.$$ 
Letting $R\rightarrow\infty$ and then $\varepsilon\rightarrow 0$, we conclude that $$|\y|^2\leq {C}\exp(-{|x|}^2/2)\mbox{ on }S.$$

Recall that on {$S$} we have $\bfy={(J\overline\nabla\psi)\circ \pi_L}$.  Therefore we can add a constant to $\psi$  and find some other constant ${C}={C}(r_0,R_0,{D}_k)$ so that, after integration, 
$$|\psi(x)|\leq {C}\exp\left(-\frac{1}{4}|x|^2\right)\quad\mbox{for all}\quad x\in S_0\setminus B_{R_1}.$$  
As the $C^{k+1}$ norm of $\psi$  in 
$S_0\setminus B_{R_1}$ is bounded it follows from standard interpolation inequalities for H\"older spaces (see e.g.~\cite[Theorem 3.2.1]{Krylov}) that, for some {further} constant $C=C(r_0,R_0,{D}_k)${,} $$\|\psi\|_{C^{k}(L_0\setminus B_{R})}\leq C\exp(-aR^2)$$
for some constant $a>0$ and any $R\geq R_1$.

 We can argue in the same manner for each connected component of $L_0\setminus B_{R_1}$ 
and conclude the desired result.
\end{proof}

We now make an observation concerning $\Theta$ given in \eq{gaussian}. Given $L^i\in{\mathcal S}(K)$, we denote by $\Theta^i_t(x_0,l)$ the Gaussian density ratios \eqref{gaussian} evaluated at $L^i_t$.

\begin{lemm}\label{lemm.asympt.zero}
Let $L^i$ be a sequence in ${\mathcal S}(K)$ and $(x_i)_{i\in \N}$ a sequence of points in $\C^n$ with $|x_i|$ tending to infinity.
 Then, for all $l>0$, $$ \lim_{i\to\infty}\Theta^i_0(x_i,l)\leq 1$$ with equality only if $\lim_{i\to\infty}\dist(x_i, L^i_0)=0$. %
\end{lemm}
\begin{proof}
We have $L^i_0=P^i_1+P^i_2$ and write $x_i=a^1_i+b^1_i=a^2_i+b^2_i$, where $a^j_i \in P^i_j$ and $\langle a^j_i,b^j_i\rangle=0$ for $j=1,2$.  We set $Q_i=L^i_0-%\{
x_i%\}
$, where we mean that we translate $L^i_0$ by the vector given by $x_i$. We have
\begin{equation}\label{dist.plane}
\min\{|b^1_i|,|b^2_i|\}=\dist(x_i, {L^i_0})=\dist(0,Q_i)
\end{equation}

Suppose first that
$\limsup_{i\to\infty}|b^1_i|=\limsup_{i\to\infty}|b^2_i|=+\infty$.
We then have $\dist(0,Q_i)$ tending to infinity and so
$$\lim_{i\to\infty}\Theta_0(x_i,l)= \lim_{i\to\infty} \int_{Q_i}\Phi(0,l) \d\H^n =0.$$

Otherwise, without loss of generality, %we have that
$\limsup_{i\to\infty}|b^1_i|<\infty$
and necessarily $\lim\inf_{i\to\infty}|a^1_i|=+\infty$. Note that we must also have $\lim\inf_{i\to\infty}|b^2_i|=+\infty$  because otherwise we could extract a subsequence of $(P^i_1,P^i_2)$ converging to a pair of planes intersecting along a line. 
 
Therefore $Q_i$ sequentially converges to $P_1+b$, i.e. an affine plane parallel to some plane $P_1$, where $b=\lim_{i\to\infty}b^1_i$ is orthogonal to $P_1$. Thus 
\begin{align*}
\lim_{i\to\infty}\Theta^i_0(x_i,l)&=\int_{P_1+b}\Phi(0,l) \d\H^n= \int_{P_1}\Phi(-b,l) \d\H^n\\
&=\exp\left(-\frac{|b|^2}{4l}\right)\int_{P_1}\Phi(0,l) \d\H^n=\exp\left(-\frac{|b|^2}{4l}\right)\leq 1,
\end{align*}
with equality only if $b=0$. This proves the desired result.
\end{proof}

We can now {finish the proof of Theorem \ref{thm.asympt}}. The idea is to show that the hypotheses of Proposition \ref{graphdecay} are satisfied for all $L\in {\mathcal S}(K)$.

{First we claim that 
\begin{equation}\label{distance.zero}
\lim_{R\to\infty} \sup\{\dist(x, L_0)\,|\, L \in {\mathcal S}(K),  x\in L\setminus B_R \}=0.
\end{equation}
Indeed, if we choose any sequence $L^i\in {\mathcal S}(K)$ and pick $x_i\in L^i$ with $|x_i|$ tending to infinity, we have from \eq{Thetadec} that
$$1=\lim_{r\to 0}\Theta^i_{1/2}(x_i,r)\leq \lim_{r\to 0} \Theta^i_{0}(x_i,r+1/2)=\Theta^i_0(x_i,1/2)$$
and thus 
$$\lim_{i\to\infty}\Theta^i_0(x_i,1/2)\geq 1.$$ 
The claim now follows from Lemma \ref{lemm.asympt.zero}.}

{Second we claim the existence of $R_1>0$ so that the $C^{2,\alpha}$ norm of  $L\setminus B_{R_1}$ is uniformly bounded  for all $L\in \mathcal S(K)$. Indeed, with $\varepsilon_0>0$ fixed, we obtain from Lemma \ref{lemm.asympt.zero} the existence of $R_1$ so that  for all $L\in\mathcal{S}(K)$ we have
$$\Theta_0(x,2)\leq 1+\varepsilon_0\mbox{ for all }|x|\geq R_1/2$$
and so, for all $t,l\in[0,1]$ we have
$$ \Theta_t(x, l)\leq \Theta_0(x,l+t)\leq \Theta_0(x,2)\leq 1+\varepsilon_0 \mbox{ for all }|x|\geq R_1/2.$$
White's Regularity Theorem \cite[Theorem 3.1]{white} implies the desired claim.

From the first and second claim we see that given $r>0$ and $\varepsilon>0$ we can find $R_2$ so that for all $L\in \mathcal{S}(K)$ and $x\in L\setminus B_{R_2}$ we have that $\hat B_r(x)\cap L$  is $\varepsilon$-close in $C^{2,\alpha}$ to a ball of radius $r$ in $L_0$. Elliptic regularity implies that for every $k\in \N$ we can choose  $R_2$ larger so that  $\hat B_r(x)\cap L$  is $\varepsilon$-close in $C^{k+1,\alpha}$ to a ball of radius $r$ in $L_0$.  Thus, for every $k\in \N$, we can find $r_0,R_0$, and  $D_k$ so that the  hypotheses of Proposition \ref{graphdecay} are satisfied for all $L\in \mathcal{S}(K). $
}
This implies the desired result.
\end{proof}

\section{Fredholm Theory}\label{section.fredholm}

In this section we develop the Fredholm theory for the operator %$\mathcal{L}$ in \eq{Leq} 
 $$\mathcal{L}(\phi)=\Delta\phi+\langle\bfx,\nabla\phi\rangle-2\phi$$ 
defined on the self-expander 
$L$, which already arose in \eq{Leq}. The relevant spaces to consider are given below.

\begin{dfn}\label{Hk*dfn}
For $k\in\Z^+$, let $H^k(L)$ denote the Sobolev space $W^{k,2}(L)$ with norm
$$\|\phi\|_{H^k}=\left(\sum_{j=0}^k{\int_L}|\nabla^j\phi|^2 \d\mathcal{H}^n\right)^{\frac{1}{2}}.$$
 Let $H^k_*(L)$ denote the subspace of $H^k(L)$ such that the norm
$$\|\phi\|_{H^k_*}=\left(\sum_{j=0}^k\int_L|\nabla^j\phi|^2 \d\mathcal{H}^n+\sum_{j=1}^{k-1}\int_L\langle\bfx^{\top},\nabla^j\phi\rangle^2
 \d\mathcal{H}^n\right)^{\frac{1}{2}}$$
is finite.  Both $H^k$ and $H^k_*$ are Banach spaces (in fact, Hilbert spaces).
\end{dfn}

Our first result, which shows the utility of Definition \ref{Hk*dfn}, is the following.

\begin{prop}\label{Lctsprop}
The map $\mathcal{L}:H^{k+2}_*(L)\rightarrow H^k(L)$ is well-defined and continuous.
\end{prop}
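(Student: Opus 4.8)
The plan is to estimate each term of $\mathcal L(\phi)=\Delta\phi+\langle\bfx,\nabla\phi\rangle-2\phi$ in the $H^k(L)$-norm by the $H^{k+2}_*(L)$-norm of $\phi$. The first term $\Delta\phi$ and the last term $-2\phi$ are immediate: $\|\Delta\phi\|_{H^k}$ is controlled by $\|\phi\|_{H^{k+2}}\le\|\phi\|_{H^{k+2}_*}$ after commuting $\Delta$ past covariant derivatives (the commutators introduce curvature terms of $L$, which are bounded by Lemma \ref{embedlem} and the exponential decay of Theorem \ref{thm.asympt}, so they are harmless), and $\|2\phi\|_{H^k}\le 2\|\phi\|_{H^{k+2}_*}$ trivially. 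So the only genuine issue is the first-order term $\langle\bfx,\nabla\phi\rangle$, whose coefficient $\bfx$ is unbounded on $L$; this is exactly the term for which the weighted space $H^k_*$ was designed.

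For that term I would write $\langle\bfx,\nabla\phi\rangle=\langle\bfx^\top,\nabla\phi\rangle$ (since $\nabla\phi$ is tangent to $L$) and then differentiate: $\nabla^j\big(\langle\bfx^\top,\nabla\phi\rangle\big)$ expands, by the Leibniz rule, into a sum of terms of the shape $\langle\nabla^a\bfx^\top,\nabla^{b+1}\phi\rangle$ with $a+b=j$. The key structural point is that $\nabla\bfx^\top$, and more generally all higher covariant derivatives $\nabla^a\bfx^\top$ for $a\ge1$, are \emph{bounded} tensor fields on $L$: indeed $\overline\nabla\bfx=\id$ and projecting onto $TL$ gives $\nabla_v\bfx^\top = v - (\text{second fundamental form terms})$, so $\nabla\bfx^\top$ is controlled by the geometry of $L$, which is uniformly bounded (bounded second fundamental form and all derivatives, again by Lemma \ref{embedlem} together with White regularity / elliptic estimates and the exponential decay in Theorem \ref{thm.asympt}). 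Hence in the Leibniz expansion, the only term where $\bfx^\top$ appears \emph{undifferentiated} is $\langle\bfx^\top,\nabla^{j+1}\phi\rangle$; for $j=0$ this is $\langle\bfx^\top,\nabla\phi\rangle$, whose $L^2$ norm is one of the summands in $\|\phi\|_{H^{k+2}_*}$ (the range $1\le j\le k+1$ of the weighted terms in the $H^{k+2}_*$ norm covers $j=1,\dots,k+1$, so in particular it covers $\langle\bfx^\top,\nabla^j\phi\rangle$ for all the orders that arise when differentiating $\mathcal L(\phi)$ up to order $k$), and for $1\le j\le k$ this is again exactly one of the weighted summands in $\|\phi\|_{H^{k+2}_*}$. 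All remaining terms $\langle\nabla^a\bfx^\top,\nabla^{b+1}\phi\rangle$ with $a\ge1$ have $\nabla^a\bfx^\top$ bounded and $b+1\le j+1\le k+1\le k+2$, so they are bounded by $\|\phi\|_{H^{k+2}}\le\|\phi\|_{H^{k+2}_*}$.

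Assembling these estimates: $\|\mathcal L(\phi)\|_{H^k}\le C\|\phi\|_{H^{k+2}_*}$ for a constant $C$ depending only on $L$ (through the uniform bounds on its geometry), which proves both well-definedness (the right-hand side is finite for $\phi\in H^{k+2}_*$) and continuity (the map is linear, so the bound is exactly operator-norm continuity). The main obstacle is bookkeeping rather than conceptual: one must verify carefully that the weighted norm $\|\cdot\|_{H^{k+2}_*}$, which only includes weighted terms $\langle\bfx^\top,\nabla^j\phi\rangle$ for $1\le j\le k+1$, really captures \emph{every} place where an undifferentiated $\bfx^\top$ can land after applying up to $k$ covariant derivatives to the first-order term of $\mathcal L$ — in particular the endpoint $j=0$ term $\langle\bfx^\top,\nabla\phi\rangle$ itself, which corresponds to the $j=1$ weighted summand — and that all the curvature commutators and the factors $\nabla^a\bfx^\top$ ($a\ge1$) are genuinely bounded on $L$, which is where Lemma \ref{embedlem} and Theorem \ref{thm.asympt} are invoked.
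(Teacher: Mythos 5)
Your proposal is correct and follows essentially the same route as the paper: the paper's proof also reduces everything to the inductive identity $\nabla^i\mathcal{L}(\phi)=\Delta\nabla^i\phi+\langle \bfx^{\top},\nabla^{i+1}\phi\rangle+\sum_{j=0}^i C_j\star\nabla^j\phi$ with $C_j$ uniformly bounded, using exactly your two structural inputs (boundedness of $|A|$ and its derivatives from Theorem \ref{thm.asympt}, and the relation $H=\bfx^{\bot}$ to control $\nabla\bfx^{\top}$), so that the only unbounded contributions are the weighted terms $\langle\bfx^{\top},\nabla^{j}\phi\rangle$, $1\le j\le k+1$, built into the $H^{k+2}_*$ norm. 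No gaps.
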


\begin{proof}
 From Theorem \ref{thm.asympt} we have that every derivative of the second fundamental form of $L$ is uniformly bounded. 
Thus if $T$ is a tensor on $L$ we have from the Bochner formula and Gauss equation that
\begin{equation}\label{bochner.fredholm}
\nabla \Delta T=\Delta \nabla T+C_1\star \nabla T+C_0\star T,
\end{equation}
where $C_0, C_1$ are two uniformly bounded tensors (depending only on $L$) and $A\star B$ denotes any contraction of tensor $A$ with tensor $B$. Moreover, using the fact that $H=\x^{\bot}$ on $L$ we also have
\begin{equation}\label{bochner2.fredholm}
\nabla \langle \x^{\top}, T\rangle= \langle \x^{\top},\nabla T \rangle+C_0\star T,
\end{equation}
 where $C_0$ is another uniformly bounded tensor which depends only on $L$.
 
 We can use \eqref{bochner.fredholm} and \eqref{bochner2.fredholm} inductively to conclude that for all $i\in \N$ and $\phi\in C_0^{\infty}({L})$ we have
 \begin{equation}\label{induct.fredholm}
 \nabla^i\mathcal{L}(\phi)=\Delta \nabla^{i}\phi+ \langle \x^{\top}, \nabla^{i+1}\phi\rangle+\sum_{j=0}^i C_j\star \nabla^j\phi,
 \end{equation}
 where $C_j$ are uniformly bounded tensors. Thus for all $i\in \N$ we can find a positive constant $c_i=c_i(L)$ so that 
\begin{equation*}
\int_{L}|\nabla^i\mathcal{L}(\phi)|^2\d\H^n\leq c_i\left(\sum_{j=0}^{i+2} \int_{L}|\nabla^j\phi|^2 \d\H^n+  \int_{L}\langle \x^{\top}, \nabla^{i+1}\phi\rangle^2 \d\H^n\right).
\end{equation*}
 It follows that $\mathcal{L}$ defines a continuous map from $H^{k+2}_*(L)$ to $H^k(L)$.  
\end{proof}

\begin{thm}\label{main.fredholm} The map $\mathcal {L}: H^2_{*}(L)\longrightarrow L^2(L)$ is an isomorphism.
\end{thm}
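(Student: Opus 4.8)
The plan is to show that $\mathcal{L}\colon H^2_*(L)\to L^2(L)$ is injective with closed range, and then that the range is dense (equivalently, that the formal adjoint has trivial kernel), so surjectivity follows; combined with the open mapping theorem this gives the isomorphism. The operator $\mathcal{L}(\phi)=\Delta\phi+\langle\bfx,\nabla\phi\rangle-2\phi$ is formally self-adjoint with respect to the weighted measure $e^{|x|^2/2}\a$, since $\langle\bfx,\nabla\phi\rangle\,e^{|x|^2/2} = \langle\bfx^\top,\nabla\phi\rangle\,e^{|x|^2/2}$ and $\Div_L(\bfx^\top) = n + \langle H,\bfx\rangle = n+|\bfx^\bot|^2$; integrating by parts one gets
$$
\int_L \mathcal{L}(\phi)\,\psi\, e^{|x|^2/2}\a = -\int_L\big(\langle\nabla\phi,\nabla\psi\rangle + (2 + \tfrac{n}{2} + \tfrac12|\bfx^\bot|^2)\phi\psi\big)\, e^{|x|^2/2}\a
$$
after absorbing the zeroth-order terms; in particular the associated Dirichlet form is strictly negative definite. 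This is the key structural point and it gives injectivity immediately on the weighted space, but here the target is the \emph{unweighted} $L^2(L)$, so I must be careful to track which function space the estimates live in.

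First I would establish the basic elliptic a priori estimate in the spaces of Definition~\ref{Hk*dfn}: using the exponential decay from Theorem~\ref{thm.asympt} and the bounded geometry it provides (uniformly bounded second fundamental form and all derivatives, plus a uniform injectivity radius lower bound away from a compact set), one proves a Gårding-type inequality
$$
\|\phi\|_{H^2_*}\le C\big(\|\mathcal{L}(\phi)\|_{L^2} + \|\phi\|_{L^2(B_R)}\big)
$$
for suitable large $R$. The point is that outside $B_R$, $L$ is a small exponentially-decaying graph over the pair of planes $L_0$, and on $L_0$ the operator $\mathcal{L}$ reduces (after the change of variables $\bfx^\top \mapsto$ position vector on the plane) to the model operator $\Delta_{\mathbb{R}^n}+\langle x,\nabla\rangle-2$ on $\mathbb{R}^n\setminus B_R$, for which the Ornstein--Uhlenbeck structure yields the estimate directly by integration by parts against $\phi$ with the weight; the error terms from the graph are controlled by the decay. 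Interior estimates on the compact piece are standard. This shows $\mathcal{L}$ is semi-Fredholm: the estimate plus Rellich compactness of $H^2_*(B_R)\hookrightarrow L^2(B_R)$ gives finite-dimensional kernel and closed range.

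Next, injectivity: if $\mathcal{L}(\phi)=0$ with $\phi\in H^2_*(L)$, the exponential decay of $L$ forces $\phi$ and its derivatives to decay exponentially as well (elliptic estimates again, comparing with the model operator whose solutions decaying at infinity must decay like Gaussians by a barrier argument exactly as in the proof of Proposition~\ref{graphdecay}), so $\phi\, e^{|x|^2/4}\in L^2$ and the integration by parts above is justified with no boundary term; strict negativity of the Dirichlet form then gives $\phi\equiv 0$. For surjectivity it remains to kill the cokernel: since the range is closed, it suffices to show that if $u\in L^2(L)$ satisfies $\int_L u\,\mathcal{L}(\phi)\a = 0$ for all $\phi\in H^2_*(L)$, then $u=0$. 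Here $u$ is a weak solution of the formal $L^2$-adjoint $\mathcal{L}^*(u) = \Delta u - \langle\bfx,\nabla u\rangle - (n+2+|\bfx^\bot|^2)u = 0$; elliptic regularity makes $u$ smooth, and the drift term $-\langle\bfx,\nabla u\rangle$ now points inward so that $u$ \emph{grows} at most like $e^{|x|^2/2}$ times an $L^2$ function — one shows via Agmon-type exponential-decay estimates (the zeroth order coefficient $-(n+2+|\bfx^\bot|^2)$ is uniformly negative and $|\bfx|\to\infty$) that in fact $u$ decays, whence $u e^{-|x|^2/4}\in L^2$, at which point one pairs $\mathcal{L}^*(u)=0$ against $u\, e^{-|x|^2/2}$ and the same strict sign of the quadratic form forces $u=0$.

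The main obstacle is the mismatch between the unweighted target $L^2(L)$ and the weighted structure that makes $\mathcal{L}$ self-adjoint and coercive: I expect the delicate step to be the decay argument for the adjoint solution $u$, where the weight works against us, requiring a careful Agmon/barrier estimate using that the potential $-(n+2+|\bfx^\bot|^2)$ is bounded above by $-(n+2)<0$ and that on the planar model the operator is explicitly an Ornstein--Uhlenbeck type operator. The rest — the Gårding inequality, Rellich, and the self-adjoint coercivity giving injectivity — is routine given Theorem~\ref{thm.asympt}.
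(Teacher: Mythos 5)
Your overall skeleton (injectivity $+$ closed range $+$ triviality of the cokernel via the formal adjoint) matches the paper's, but both halves are executed differently, and your route to surjectivity is genuinely different from the paper's. For the first half, the paper does not stop at a G\r{a}rding inequality with a compact error term: a direct \emph{unweighted} integration by parts using $\Div\bfx^{\top}=n+|\bfx^{\bot}|^2$ gives the identity $-\int_L\mathcal{L}(\phi)\phi=\int_L|\nabla\phi|^2+(2+\tfrac n2+\tfrac12|\bfx^{\bot}|^2)\phi^2$ and, after further divergence and Bochner identities, the global coercive estimate $\|\phi\|_{H^2_*}^2\le C\|\mathcal{L}(\phi)\|_{L^2}^2$ with \emph{no} lower-order term; this yields injectivity and closed range in one stroke and makes your separate Gaussian-decay argument for kernel elements (and the Rellich step) unnecessary. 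Note also that your weighted integration-by-parts formula is miswritten: with respect to $e^{|x|^2/2}\a$ the zeroth-order coefficient is just $2$, whereas the coefficient $2+\tfrac n2+\tfrac12|\bfx^{\bot}|^2$ you quote belongs to the unweighted pairing above. For the cokernel, the paper proceeds quite differently: it first shows $\eta\in H^1(L)$ by a cut-off argument and then runs a frequency-type growth argument, showing that $f(r)=\int_{L\cap B_r}\eta^2+(2+\tfrac n2)^{-1}\int_{L\cap B_r}|\nabla\eta|^2$ satisfies $(2+\tfrac n2)f(r)\le rf'(r)$, which forces polynomial growth of $f$ unless $\eta\equiv0$. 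Your alternative --- pairing $\mathcal{L}^*(\eta)=0$ against $\eta e^{-|x|^2/2}$ and using strict negativity of the resulting Dirichlet form --- is correct and arguably cleaner, but your stated worry is misplaced: no Agmon-type decay of $\eta$ is needed at all, since $\eta\in L^2$ already and the \emph{decaying} weight $e^{-|x|^2/2}$ kills the boundary terms along a sequence of radii once one has the preliminary $H^1$ (or annulus-wise $\nabla\eta$) control that the paper establishes with its cut-off computation --- that preliminary step is the one you actually need to spell out, not the decay of $\eta$.
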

\begin{proof}
We start by proving the existence of a positive constant $C_0=C_0(L,n)$ so that for all $\phi$ in $H^2_{*}(L)$ we have
\begin{equation}\label{inj.fredholm}
\|\phi\|^2_{H^2_{*}}\leq C_0\int_{L} |\mathcal {L}(\phi)|^2\d\H^n=C_0\|\mathcal {L}(\phi)\|^2_{L^2}.
\end{equation}
Using the fact that $H=\x^{\bot}$ on $L$, we have
\begin{equation}\label{div}
{\rm div}\, \bfx^{{\top}}=n+|\bfx^{{\bot}}|^2
\end{equation}
and so direct computation shows that 
\begin{equation}\label{grad.fredholm}
2\phi \langle \x,\nabla \phi\rangle=\Div(\x^{\top}\phi^2)-(n+|\x^{\bot}|^2)\phi^2.
\end{equation}
Thus
$$-\int_{L}\mathcal{L}(\phi)\phi\, \d\H^n=\int_{L}|\nabla\phi|^2+(2+n/2+|\x^{\bot}|^2/2) \phi^2\d\H^n$$
and so, since $-\mathcal{L}(\phi)\phi\leq |\mathcal{L}(\phi)|^2+\frac{1}{4}\phi^2$, we obtain
\begin{equation}\label{cauchy.fredholm} 
\int_{L}|\nabla\phi|^2 \d\H^n\leq  \int_{L}|\mathcal{L}(\phi)|^2 \d\H^n.
\end{equation} 
Moreover, if $A$ is the second fundamental form of $L$, 
\begin{multline}\label{div.fredholm}
\Delta\phi\langle \x,\nabla \phi\rangle =\Div\left(\nabla \phi\langle \x^{\top},\nabla \phi\rangle-\x^{\top}\frac{|\nabla \phi|^2}{2}\right)+\frac{|\nabla \phi|^2}{2}(n-2+|\x^{\bot}|^2)\\
-\langle H, A(\nabla \phi, \nabla \phi)\rangle
\end{multline}
and combining  \eqref{div.fredholm} with \eqref{grad.fredholm} we obtain
\begin{multline}\label{l2.fredholm}
\int_{L} |\mathcal {L}(\phi)|^2\d\H^n=\int_L (\Delta \phi)^2+\langle \x,\nabla \phi\rangle^2+(n+2+|\x^{\bot}|^2)|\nabla \phi|^2\d\H^n\\
+2\int_L(n+2+|\x^{\bot}|^2)\phi^2\d\H^n-2\int_L \langle H, A(\nabla \phi, \nabla \phi)\rangle \d\H^n.
\end{multline}
From the Bochner formula and Gauss equation  there is a constant $c=c(n)$ so that on $L$ we have $$|\nabla \Delta\phi-\Delta \nabla \phi|\leq c|A|^2|\nabla \phi|$$
and thus
\begin{equation}\label{hess.fredholm}
|\nabla^2 \phi|^2\leq (\Delta \phi)^2+\Div\big(\nabla^2\phi(\nabla\phi,\cdot)-\Delta\phi\nabla \phi\big)+c|A|^2|\nabla \phi|^2.
\end{equation}
Inserting  \eqref{hess.fredholm} in \eqref{l2.fredholm} we have
\begin{multline}\label{l2l2.fredholm}
\int_L|\nabla^2 \phi|^2+\langle \x,\nabla \phi\rangle^2+|\nabla \phi|^2+\phi^2 \d\H^n\leq \int_{L} |\mathcal {L}(\phi)|^2\d\H^n\\
+(c+2)\int_L |A|^2|\nabla \phi|^2 \d\H^n.
\end{multline}
From Theorem \ref{thm.asympt} we know that $|A|^2$ is uniformly bounded on $L$ and so we have from \eqref{cauchy.fredholm} the existence of a constant $C=C(L,n)$ so that
$$
\int_L |A|^2|\nabla \phi|^2 \d\H^n\leq  C\int_{L}|\mathcal{L}(\phi)|^2 \d\H^n.
$$
This last inequality and \eqref{l2l2.fredholm} imply \eqref{inj.fredholm} at once.

The immediate consequence of \eqref{inj.fredholm} is that $\mathcal{L}$ is injective. It also follows that its range is closed for 
the following reason. If $v_i=\mathcal{L}(\phi_i)$ is a sequence converging in $L^2(L)$ to $v$, then \eqref{inj.fredholm} implies  
$(\phi_i)_{i\in \N}$ is a Cauchy sequence in  $H^2_{*}(L)$ and thus sequentially converging to $\phi \in H^2_{*}(L)$. Naturally, 
$\mathcal{L}(\phi)=v$ since $\mathcal{L}$ is continuous by Proposition \ref{Lctsprop}.

We now argue that $\mathcal{L}$ is surjective. In order to do so we compute the formal adjoint $\mathcal{L}^*$ of  $\mathcal{L}$.
% Since $L$ is a self-expander we have
%\begin{equation*}
%\Div(\bfx^\top)=n+|\bfx^\bot|^2
%\end{equation*}
%and so, 
 {Using \eq{div} we have,} for every $u,v\in C^{\infty}(L)$, %we have 
\begin{equation}\label{uv.fredholm}
\langle \x,\nabla u\rangle v=\Div(\x^{\top}uv)-(n+|\x^{\bot}|^2)uv- \langle \x,\nabla v\rangle u.
\end{equation}
Hence, if $\phi\in C^{\infty}_0(L)$ and $\eta \in C^{\infty}(L)$,
$$
\int_{L}\mathcal{L}(\phi)\eta \,\d\mathcal{H}^n=\int_L\phi\big(\Delta\eta-\langle\bfx,\nabla\eta\rangle-(n+|\bfx^\bot|^2+2)\eta\big) \d\mathcal{H}^n
$$
which means 
\begin{equation}\label{adj.fredholm}
\mathcal{L}^*(\eta)=\Delta\eta-\langle\bfx,\nabla\eta\rangle-(n+|\bfx^\bot|^2+2)\eta.
\end{equation}

Suppose that $\mathcal{L}$ is not surjective. Since its range is closed we can find $\eta \in L^2(L)$ non-zero such that
$$\int_L \mathcal{L}(\phi)\eta\, \d\H^n=0\mbox{ for all }\phi \in C^{\infty}_0(L).$$
Elliptic regularity implies that $\eta$ is smooth and hence a solution to $\mathcal{L}^*(\eta)=0$. The next lemma implies that $\eta=0$ which is a contradiction.

\begin{lemm}\label{injectprop}  If $\eta\in C^{\infty}(L)\cap L^2(L)$ and $\mathcal{L}^*(\eta)=0$, then $\eta=0$.
\end{lemm}
\begin{proof}
We start by arguing that $\eta\in H^1(L)$.  For each $R>0$ consider a cut-off function $\phi_R\in C^{\infty}_0(\C^n)$ so that
\begin{equation}\label{phi.conds}
0\leq \phi_R\leq1 ,\quad \phi_R|_{B_R}=1,\quad \mbox{supp}(\phi_R)\subseteq B_{2R}, \quad|\nabla\phi_R|\leq c_0R^{-1},
\end{equation}
for some universal constant $c_0$.

Using \eqref{uv.fredholm} with $u=\eta^2/2$ and $v=\phi_R^2$ we have
\begin{multline*}%\label{w12.fredholm}
0=-\int_{L}\mathcal{L}^*(\eta)\eta\phi_R^2\,\d\H^n=\int_{L}|\nabla \eta|^2\phi_R^2+\left(\frac{n+|\x^{\bot}|^2}{2}+2\right)\eta^2\phi_R^2\,\d\H^n\\
+\int_{L}\langle\nabla \eta, \nabla \phi_R^2\rangle\eta \d\H^n-\int_{L}\langle\x, \nabla \phi_R^2\rangle\frac{\eta^2}{2}\, \d\H^n.
\end{multline*}
{We have for some universal constant $c$ and all $\varepsilon>0$ that 
$$\int_{L}\langle\nabla \eta, \nabla \phi_R^2\rangle\eta \d\H^n\leq {c\varepsilon}\int_{L}|\nabla \eta|^2\phi_R^2\, \d\H^n+\frac{c}{\varepsilon} \int_L\eta^2\d\H^n$$
 and thus we find another}
 uniform constant $c$ so that
$$\int_{L}|\nabla \eta|^2\phi_R^2\, \d\H^n\leq c\int_{L}\langle\x, \nabla \phi_R^2\rangle\frac{\eta^2}{2} \,\d\H^n+c\int_L\eta^2\d\H^n.$$
The term $\langle\x, \nabla \phi_R^2\rangle$ is uniformly bounded (independent of $R$) by \eq{phi.conds} and so
$$ \int_{L}|\nabla \eta|^2\phi_R^2 \,\d\H^n\leq c\int_L\eta^2\d\H^n$$
for some other universal constant $c$. Letting $R$ tend to infinity we obtain that  $\eta\in H^1(L)$.

We now show that $\eta=0$.  Set 
$$f(r)=\int_{L\cap B_r}\eta^2 \d\mathcal{H}^n+\left(2+\frac{n}{2}\right)^{-1}\int_{L\cap B_r}|\nabla\eta|^2 \d\mathcal{H}^n.$$
The idea is to show that 
\begin{equation}\label{ode.fredholm}
\left(2+\frac{n}{2}\right)f(r)\leq rf^{\prime}(r){\mbox{ for almost all }r\geq 2+\frac n 2}.
\end{equation}
If true we obtain from integrating \eqref{ode.fredholm} that, for all $r\geq r_1{\geq 2+\frac n 2}$,
$$f(r)\geq f(r_1)\left(\frac{r}{r_1}\right)^{2+\frac{n}{2}}.$$
As $\eta \in H^1(L)$ the function $f$ is uniformly bounded, which contradicts the inequality above unless $f\equiv 0$, which means
 $\eta=0$.

Hence to complete the proof we need to show \eqref{ode.fredholm}. Applying integration by parts to the identity
$$\int_{L\cap B_r}\mathcal{L}^*(\eta)\eta\, \d\H^n=0$$
 and using \eqref{grad.fredholm} with $\phi=\eta$ we obtain
\begin{multline*}
\int_{L\cap B_r}|\nabla \eta|^2 +\left(2+\frac n 2+\frac{|\bfx^\bot|^2}{2}\right)\eta^2 \d\mathcal{H}^n\\
=-\frac{1}{2}\oint_{\partial(L\cap B_r)}\big(\langle\bfx^\top,\nu\rangle\eta^2-\partial_\nu\eta^2\big)\d\mathcal{H}^{n-1}.
\end{multline*}
Therefore
\begin{multline}\label{ff.fredholm}
\left(2+\frac{n}{2}\right)f(r)\leq \frac{r}{2}\oint_{\partial(L\cap B_r)}\eta^2 \d\mathcal{H}^{n-1}
+\oint_{\partial(L\cap B_r)}|\eta||\nabla\eta| \d\mathcal{H}^{n-1}\\
 \leq \frac{r+1}{2}\oint_{\partial(L\cap B_r)}\eta^2 \d\mathcal{H}^{n-1}+\frac{1}{2}\oint_{\partial(L\cap B_r)}
|\nabla\eta|^2 \d\mathcal{H}^{n-1}.
\end{multline}
On the other hand using the co-area formula we have, for almost all $r$, 
\begin{align*}
f^{\prime}(r)&=\oint_{\partial (L\cap B_r)} \eta^2\frac{|\bfx|}{|\bfx^\top|} \,\d\mathcal{H}^{n-1}+\left(2+\frac{n}{2}\right)^{-1}
\oint_{\partial (L\cap B_r)} |\nabla\eta|^2\frac{|\bfx|}{|\bfx^\top|} \,\d\mathcal{H}^{n-1}\\
&\geq \oint_{\partial (L\cap B_r)}\eta^2 \d\mathcal{H}^{n-1}+\left(2+\frac{n}{2}\right)^{-1}\oint_{\partial (L\cap B_r)}
|\nabla\eta|^2 \d\mathcal{H}^{n-1}.
\end{align*}
Combining this inequality with \eqref{ff.fredholm} we obtain  {that \eqref{ode.fredholm} holds for almost all $r\geq 2+\frac n 2$.}
 \end{proof}
  Applying Lemma \ref{injectprop} shows that $\mathcal{L}$ is surjective, completing the proof of Theorem \ref{main.fredholm}.\end{proof}

\begin{cor}\label{Lisocor} The map $\mathcal {L}: H^{k+2}_{*}(L)\longrightarrow H^k(L)$ is an isomorphism  for all $k\in \N$.
\end{cor}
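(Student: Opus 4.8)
The plan is a standard elliptic bootstrap, taking Theorem \ref{main.fredholm} as the base case $k=0$ and inducting on $k$. Continuity of $\mathcal{L}\colon H^{k+2}_*(L)\to H^k(L)$ is Proposition \ref{Lctsprop}, and injectivity is immediate since $H^{k+2}_*(L)\subseteq H^2_*(L)$ and $\mathcal{L}$ is injective on $H^2_*(L)$ by Theorem \ref{main.fredholm}; so the entire content is surjectivity together with the a priori estimate $\|\phi\|_{H^{k+2}_*}\leq C\|\mathcal{L}(\phi)\|_{H^k}$, after which the bounded inverse theorem finishes the proof.

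So I would suppose the corollary holds with $k-1$ in place of $k$, and take $v\in H^k(L)\subseteq L^2(L)$. By Theorem \ref{main.fredholm} there is a unique $\phi\in H^2_*(L)$ with $\mathcal{L}(\phi)=v$; by the inductive hypothesis applied to $v\in H^{k-1}(L)$ and uniqueness in $H^2_*(L)$, in fact $\phi\in H^{k+1}_*(L)$ with $\|\phi\|_{H^{k+1}_*}\leq C\|v\|_{H^{k-1}}\leq C\|v\|_{H^k}$. Since by Theorem \ref{thm.asympt} the metric on $L$ has bounded geometry, $\mathcal{L}$ is uniformly elliptic with smooth coefficients all of whose derivatives are bounded, so I would invoke interior elliptic regularity (difference quotients) to conclude $\phi\in H^{k+2}_{\loc}(L)$. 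It then only remains to promote this local regularity to a global weighted bound.

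For that, I would apply $\nabla^i$ to $\mathcal{L}(\phi)=v$ and use \eqref{induct.fredholm}: the tensor $T:=\nabla^i\phi$ solves $\Delta T+\langle\x^{\top},\nabla T\rangle=v_i$ with $\|v_i\|_{L^2}\leq C(\|v\|_{H^k}+\|\phi\|_{H^i})$ for $i\leq k$, i.e.\ the same drift-elliptic equation as in Theorem \ref{main.fredholm} but on tensors and without the zeroth-order term. I would then repeat, with the cut-off functions $\phi_R$ of \eqref{phi.conds} to cope with non-compactness, the two integration-by-parts computations behind \eqref{cauchy.fredholm} and \eqref{l2l2.fredholm}: pairing the equation with $T$, then with itself, using \eqref{div} to control $\Div\x^{\top}=n+|\x^{\bot}|^2$, the Bochner/Gauss inequality \eqref{hess.fredholm} to pass from $(\Delta T)^2$ to $|\nabla^2T|^2$, and the uniform bounds on $|A|$ and $|H|=|\x^{\bot}|$ from Theorem \ref{thm.asympt}. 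This should yield, for $0\leq i\leq k$,
$$\int_L|\nabla^{i+2}\phi|^2+\langle\x^{\top},\nabla^{i+1}\phi\rangle^2\,\d\H^n\leq C\big(\|v\|_{H^k}^2+\|\phi\|_{H^{i+1}}^2\big),$$
the $\phi_R$-boundary terms vanishing in the limit because $\phi\in H^{k+1}_*(L)$. Summing over $i$ and inserting $\|\phi\|_{H^{k+1}}\leq\|\phi\|_{H^{k+1}_*}\leq C\|v\|_{H^k}$ from the previous step gives $\phi\in H^{k+2}_*(L)$ with $\|\phi\|_{H^{k+2}_*}\leq C\|v\|_{H^k}$, as required.

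The hard part will be the non-compact book-keeping: verifying that the cut-off error terms in all of these integrations by parts genuinely tend to zero (so that the identities of Section \ref{section.fredholm} survive on $L$), and noting that, unlike the scalar operator $\mathcal{L}$ where the $-2\phi$ term supplied the coercivity, here the positivity needed to absorb terms must come from $\Div\x^{\top}=n+|\x^{\bot}|^2\geq n$ together with the inductive control of the lower-order Sobolev norms. An alternative to the induction would be to prove the single a priori estimate $\|\phi\|_{H^{k+2}_*}\leq C(\|\mathcal{L}(\phi)\|_{H^k}+\|\phi\|_{L^2})$ directly, absorbing intermediate-order derivatives by interpolation on the bounded-geometry manifold $L$ and then absorbing $\|\phi\|_{L^2}$ via $\|\phi\|_{L^2}\leq\|\phi\|_{H^2_*}\leq C\|\mathcal{L}(\phi)\|_{L^2}$ from Theorem \ref{main.fredholm}.
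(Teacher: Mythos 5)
Your proposal is correct and takes essentially the same route as the paper: the paper likewise inducts on $k$, sets $T=\nabla^{k+1}\phi$, and combines \eqref{induct.fredholm} with the divergence identity behind \eqref{div.fredholm} and the Bochner formula to bound $\|\nabla^2 T\|_{L^2}$ and $\|\langle \x^{\top},\nabla T\rangle\|_{L^2}$ in terms of $\|\phi\|_{H^{k+2}_*}$ and $\|\mathcal{L}(\phi)\|_{H^{k+1}}$. The one point to be careful about is that the tensor-valued integration by parts produces a term of the form $|T||\nabla T||A|^2|\x^{\top}|$, so mere boundedness of $|A|$ is not enough — one needs the exponential decay from Theorem \ref{thm.asympt} to make $|A|^2|\x|$ bounded, which your citation of that theorem does supply.
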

\begin{proof}

We proceed by induction where the case $k=0$ follows from Theorem \ref{main.fredholm}. Assume Corollary \ref{Lisocor} holds for some
 $k\in \N$.  Thus $\mathcal{L}$ is injective and given $v\in H^{k+1}(L)\subset H^k(L)$ there is 
$\phi \in H^{k+2}_{*}(L)$ so that $\mathcal{L}(\phi)=v$.  We need to show that $\phi \in H^{k+3}_{*}(L)$ in order to prove the corollary.

Set $T=\nabla^{k+1}\phi$. From \eqref{induct.fredholm} there exists a constant $C_1=C_1(L,n,k)$ so that 
\begin{equation}\label{e2.fredholm}
\|\Delta T+\langle \x^{\top}, \nabla T\rangle\|^2_{L^2}\leq C_1\left(\|\phi\|^2_{H^{{k+1}}_{*}} +\|\mathcal {L}(\phi)\|^2_{H^{k+1}}\right).
\end{equation}
Next we argue that,  for some constant $C_2=C_2(L,n,k)$, 
\begin{equation}\label{l2norm.T}
\left|\int_{L}\langle\Delta T,\nabla_{ \x^{\top}}T\rangle \d\mathcal{H}^{n}\right|\leq C_2\int_{L}|\nabla T|^2+|T|^2\d\mathcal{H}^{n}.
\end{equation}
Reasoning as in \eqref{div.fredholm}{,} there is a universal constant $c=c(n,k)$ so that
\begin{multline*}
\left|\langle\Delta T,\nabla_{ \x^{\top}}T\rangle-\Div\left(\langle\nabla  T, \nabla_{\x^{\top}}T\rangle - \x^{\top}\frac{|\nabla T|^2}{2}\right)\right|\\
\leq c|\nabla T|^2(1+|A|^2+|\x^{\bot}|^2)+c|T||\nabla T||A|^2|\x^{\top}|.
\end{multline*}
From Theorem \ref{thm.asympt} we have that $|A|^2$ and $|H|^2=|\bfx^{\bot}|^2$ have exponential decay, so $|A|^2|\x|$ and $|A|^2+|\x^{\bot}|^2$ are uniformly bounded on $L$.  
Thus we can find a constant $C_2=C_2(L,n,k)$ so that
\begin{equation*}
\left|\langle\Delta T,\nabla_{ \x^{\top}}T\rangle-\Div\left(\langle\nabla  T, \nabla_{\x^{\top}}T\rangle - \x^{\top}\frac{|\nabla T|^2}{2}\right)\right|
\leq C_2(|\nabla T|^2+|T|^2),
\end{equation*}
which implies, after integration, inequality \eqref{l2norm.T}.

Combining \eqref{e2.fredholm} with \eqref{l2norm.T}, we have 
\begin{equation}\label{e1.fredholm}
\|\Delta T\|^2_{L^2}+\|\langle \x^{\top}, \nabla T\rangle\|^2_{L^2}\leq 
C_3\left(\|\phi\|^2_{H^{k+2}_{*}} +\|\mathcal {L}(\phi)\|^2_{H^{k+1}}\right)
\end{equation}
for some $C_3=C_3(C_1,C_2)$. Using the Bochner formula and the fact that $|A|^2$ is uniformly bounded on $L$ we can find a constant $C_4=C_4(L,n,k)$ so that
$$\|\nabla^2T\|^2_{L^2}\leq \|\Delta T\|^2_{L^2}+C_4\|\nabla T\|^2_{L^2}.$$
Combining this inequality with \eqref{e1.fredholm} we obtain
$$\|\phi\|^2_{H^{k+3}_{*}}\leq C_5\left(\|\phi\|^2_{H^{k+2}_{*}}+\|\mathcal {L}(\phi)\|^2_{H^{k+1}}\right)$$
for some constant $C_5$, which implies that $\phi \in H^{k+3}_{*}(L)$ as required. \end{proof}

\section{Local uniqueness}\label{local.section}

In this section we prove the local uniqueness of zero-Maslov class Lagrangian self-expanders $L$ asymptotic to 
transverse pairs of multiplicity one planes $L_0=P_1+P_2$.  

Recall the definition of $G_n$ in Section \ref{exponential.decay} and consider a smooth path $(P_1(s),P_2(s))\in G_n$ with $P_1(0)=P_1$ and $P_2(0)=P_2$. We assume that the difference of the Lagrangian angles $\theta\big(P_1(s)\big)-\theta\big(P_2(s)\big)$ is constant.
\begin{thm}\label{local.uniqueness} Given  a zero-Maslov class self-expander $L$ asymptotic to $L_0=P_1+P_2$, there is $R_0>0$, $s_0>0$ and $\varepsilon>0$ so that zero-Maslov class  self-expanders $L^s$  which satisfy
\begin{itemize}
\item $L^s$ is asymptotic to $L^s_0=P_1(s)+P_2(s)$ for some  $|s|\leq s_0$,
\item $L^s$ is $\varepsilon$-close in $C^2$ to $L$ in $B_{R_0}$,
\end{itemize}
exist and are unique. The  family $(L^s)_{|s|\leq s_0}$ is continuous in $C^{2,\alpha}$.
\end{thm}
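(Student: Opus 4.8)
The plan is to realize $L^s$ as a graph over $L$ of a small function $u_s$, reduce the self-expander equation to a nonlinear elliptic PDE $F(u_s,s)=0$ with $F(0,0)=0$, and apply the Inverse Function Theorem using the isomorphism in Theorem \ref{main.fredholm} (more precisely Corollary \ref{Lisocor}). The first step is to set up the graph parametrization carefully. Since $L$ is Lagrangian, a nearby Lagrangian $L^s$ can be written (outside a compact set, using the exponential decay from Theorem \ref{thm.asympt}, and on the compact part using a Weinstein-type neighborhood) as the graph of $J\overline\nabla u_s$ for a function $u_s$ on $L$; one must check that the zero-Maslov/exactness conditions translate, via Lemma \ref{selfexplem1}, into the equation $\beta_{L^s}+\theta_{L^s}=\text{const}$, and that this is equivalent to a single scalar equation for $u_s$. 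Writing the self-expander operator in these coordinates, the linearization at $u=0$, $s=0$ is precisely $\mathcal L$ up to a constant (this is the standard fact that the Jacobi-type operator for the self-expander equation on a Lagrangian, restricted to the Lagrangian deformations parametrized by potentials, is $\Delta+\langle\x,\nabla\cdot\rangle-2$, matching \eqref{Leq}).

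The second step is to choose the right Banach spaces so that $F$ is a smooth map between them and its linearization at the origin is the isomorphism of Corollary \ref{Lisocor}. I would take the domain to be $H^{k+2}_*(L)\times(-s_0,s_0)$ with $k$ large enough (so that Sobolev embedding gives $C^{2,\alpha}$ control, i.e. $k+2>n/2+2$), and the target $H^k(L)$; the $s$-dependence enters through the varying asymptotic planes $P_1(s)+P_2(s)$, and here one uses that the difference of Lagrangian angles is kept constant so that the relevant compatibility condition (the existence of a self-expander with $\kappa=1$) is preserved along the path — otherwise $F$ would not even be well-defined, since the constant in $\beta+\theta=\text{const}$ has to be consistent. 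One subtlety is that the asymptotic model changes with $s$, so strictly one is solving a family of equations on a fixed manifold $L$ but with $s$-dependent "boundary data at infinity"; I would handle this by first fixing smooth reference diffeomorphisms absorbing the change of asymptotic planes (using that $(P_1(s),P_2(s))$ stays in a compact subset of $G_n$, so Theorem \ref{thm.asympt} applies uniformly), reducing to a genuine map $F:H^{k+2}_*(L)\times(-s_0,s_0)\to H^k(L)$ with $F(0,0)=0$ and $D_uF(0,0)=\mathcal L$ an isomorphism.

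The third step is the application of the Implicit Function Theorem in Banach spaces: since $D_uF(0,0):H^{k+2}_*(L)\to H^k(L)$ is an isomorphism and $F$ is $C^1$ (in fact smooth, as the nonlinearities are algebraic in the derivatives of $u$ with uniformly bounded coefficients by Theorem \ref{thm.asympt}), there is $s_0>0$ and a unique $C^1$ path $s\mapsto u_s\in H^{k+2}_*(L)$ with $u_0=0$ and $F(u_s,s)=0$, depending continuously (indeed $C^1$) on $s$. Elliptic regularity upgrades each $u_s$ to a smooth function, and Sobolev embedding $H^{k+2}_*(L)\hookrightarrow C^{2,\alpha}$ gives that $(L^s)$ is continuous in $C^{2,\alpha}$, and that for $\varepsilon$ small enough any self-expander $\varepsilon$-close to $L$ in $C^2$ on $B_{R_0}$ and asymptotic to $P_1(s)+P_2(s)$ must be one of these $L^s$ — this last point needs that $C^2$-closeness on a large enough ball $B_{R_0}$ together with the uniform exponential decay forces global $H^{k+2}_*$-smallness, so the IFT uniqueness clause applies; choosing $R_0$ large and $\varepsilon$ small using Theorem \ref{thm.asympt} makes this work.

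The main obstacle I expect is the bookkeeping in Step 2: making the map $F$ genuinely well-defined and smooth between the weighted Sobolev spaces $H^k_*$ despite the $s$-varying asymptotic cone, and verifying that the constant-Lagrangian-angle hypothesis is exactly what is needed for solvability of the family (so that $0$ is in the image for all small $s$, not just $s=0$). The rest — computing the linearization, invoking Corollary \ref{Lisocor}, and running the Banach-space IFT — is then routine.
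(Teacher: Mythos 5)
Your proposal is correct and follows essentially the same route as the paper: graph the nearby Lagrangians via a Weinstein-type neighbourhood compatible with the asymptotic planes, encode the self-expander condition as $\beta^{\phi,s}+\theta^{\phi,s}=\mathrm{const}$ via Lemma \ref{selfexplem1}, absorb the $s$-dependence of the asymptotic cone by ambient unitary diffeomorphisms (possible precisely because the difference of Lagrangian angles is held fixed), check that the resulting map is well defined from $B_{\varepsilon_0}(0)\subset H^{k+2}_*(L)\times(-s_0,s_0)$ to $H^k(L)$ with linearization $\mathcal{L}$, and apply the Implicit Function Theorem together with Corollary \ref{Lisocor} and Theorem \ref{thm.asympt}. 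The place you flag as the main obstacle --- verifying that the nonlinearity genuinely maps into $H^k$ --- is indeed where the paper's technical effort is concentrated (Proposition \ref{Fdfnprop}, using interpolation inequalities), but your outline is otherwise the paper's argument.
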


We achieve this by studying the deformation theory of $L$ and applying the Implicit Function Theorem. 

 We start by constructing the tubular neighbourhoods of $L_0$ and $L$ that we require and derive some basic properties.

\subsection*{Symplectic preliminaries}  

The cotangent bundle of a Lagrangian $N$ has a natural symplectic structure which is exact, meaning that there is a tautological one form  $\tau\in \Lambda^1(T^*N)$ so that, if $\omega_N$ is the tautological symplectic  form on $T^*N$, then $\d\tau=-\omega_N$. The form $\tau$ is determined by the following property: if $\Xi\in \Lambda^1(N)$ and we consider the natural map $\Xi:N\rightarrow T^*N$, then $\Xi^*{\tau}=\Xi$.  We remark that on $\R^{2n}=T^*\R^n$, $\tau=\sum_{i=1}^ny_i\d x_i$ and hence $\tau$ is different from the Liouville form $\lambda$.

For $\Xi \in \Lambda^1(N)$, we let $\Gamma_{\Xi}$ denote the section of $T^*N$ given by $x\mapsto \big(x,\Xi(x)\big)$.

A symplectomorphism 
$\Phi:(M_1,\d\lambda_1)\rightarrow (M_2,\d\lambda_2)$ between exact symplectic manifolds is called exact if 
 %the pull-back of the Liouville form differs from the initial Liouville form 
%by an exact 1-form.  
 $\Phi^*(\lambda_2)-\lambda_1$ is exact.  

In particular, in the case of $\R^n\subseteq\C^n$, the map from $T^*\R^n$ to $\C^n$ given by
\begin{equation}\label{sympleq}
\big(x=(x_1,\ldots,x_n),\sum_{j=1}^ny_j(x)\d x_j\big)\mapsto (x_1+iy_1,\ldots,x_n+iy_n)
\end{equation}
 is an exact symplectomorphism identifying the zero section with the real $\R^n$ in $\C^n$.

The construction of the tubular neighbourhood of $L_0=P_1\cup P_2$ is elementary.  Without loss of generality we may assume that $P_1$ is the real $\R^n\subseteq\C^n$ and that 
$P_2=A\cdot\R^n$ where $A=\text{diag}(e^{i\theta_1},\ldots,e^{i\theta_n})$.
Naturally, we may define symplectomorphisms $\Psi_j:T^*P_j\rightarrow \C^n$ for $j=1,2$, where $\Psi_1$ is given by \eq{sympleq}  and $\Psi_2=A\circ\Psi_1$. 
Clearly, there exists $\zeta>0$ so that if 
\begin{equation}\label{Vjeq}
V_j=\big\{\big(x,\xi(x)\big)\in T^*(P_j\setminus \{0\}):|\xi(x)|<2\zeta|x|\big\}
\end{equation}
then $\Psi_1(V_1)\cap\Psi_2(V_2)=\emptyset$.  This choice ensures that we can allow for rotations of $P_1$ and $P_2$ in a tubular neighbourhood which is symplectomorphic to an open neighbourhood in 
$T^*(L_0\setminus\{0\})$, so we have the following.

\begin{lemm}\label{conenbdlem}
Set
\begin{itemize}
\item $V_0=V_1\cup V_2\subseteq T^*(L_0\setminus\{0\})$;\smallskip
\item $T_0=\Psi_1(V_1)\cup\Psi_2(V_2)\subseteq\C^n;$\smallskip
\item $\Psi_0:V_0\rightarrow T_0$ defined by $\Psi_0|_{V_j}=\Psi_j$.
\end{itemize}  
Then $V_0$ and $T_0$ are open tubular neighbourhoods of $L_0\setminus\{0\}$ and  $\Psi_0$ is a symplectomorphism preserving the Liouville form.
 Moreover, any small rotation of either of the planes $P_1$ or $P_2$ remains in $T_0$. 
 \end{lemm}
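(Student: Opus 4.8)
The plan is to verify each clause of the lemma directly from the construction; everything is elementary once we invoke the choice of $\zeta$ fixed just before the statement. First I would record that, since $P_1\cap P_2=\{0\}$, the punctured cone $L_0\setminus\{0\}$ is the disjoint union of its two connected components $P_1\setminus\{0\}$ and $P_2\setminus\{0\}$, so $T^*(L_0\setminus\{0\})=T^*(P_1\setminus\{0\})\sqcup T^*(P_2\setminus\{0\})$ and $V_0=V_1\sqcup V_2$. Each $V_j$ is open in $T^*(P_j\setminus\{0\})$ because it is cut out by the strict inequality $|\xi(x)|<2\zeta|x|$ of \eqref{Vjeq}, and the fibrewise scaling $(x,\xi(x))\mapsto(x,t\xi(x))$, $t\in[0,1]$, deformation retracts $V_j$ onto its zero section; hence $V_j$, and therefore $V_0$, is an open tubular neighbourhood of $L_0\setminus\{0\}$.

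Next I would check that $\Psi_0$ is a diffeomorphism onto $T_0$. The map $\Psi_1$ of \eqref{sympleq} is a diffeomorphism of $T^*P_1$ onto $\C^n$, and $\Psi_2=A\circ\Psi_1$ --- using the linear identification $T^*P_2\cong T^*\R^n$ induced by $A$ --- is a composition of diffeomorphisms, hence a diffeomorphism of $T^*P_2$ onto $\C^n$; so each $\Psi_j$ restricts to a diffeomorphism $V_j\to\Psi_j(V_j)$. By the choice of $\zeta$ we have $\Psi_1(V_1)\cap\Psi_2(V_2)=\emptyset$, so $T_0=\Psi_1(V_1)\sqcup\Psi_2(V_2)$ is open and $\Psi_0$, being $\Psi_j$ on the disjoint piece $V_j$, is a well-defined diffeomorphism $V_0\to T_0$ carrying the zero section onto $L_0\setminus\{0\}$; in particular $T_0$ is an open tubular neighbourhood of $L_0\setminus\{0\}$.

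I would then treat the symplectic assertion and the rotation statement together. Since $\Psi_1$ is the standard exact symplectomorphism of \eqref{sympleq}, a direct computation gives $\Psi_1^*\omega=\sum_j\d x_j\w\d y_j$ and $\Psi_1^*\lambda=\sum_j(x_j\,\d y_j-y_j\,\d x_j)$ on $T^*\R^n$; because the diagonal unitary $A$ preserves both $\omega$ and $\lambda$ on $\C^n$, the map $\Psi_2=A\circ\Psi_1$ pulls $\omega$ and $\lambda$ back to the very same expressions on $T^*P_2\cong T^*\R^n$. Declaring $\omega_0$ and $\lambda_0$ on $T^*(L_0\setminus\{0\})$ to be these common forms on each component, we obtain $\Psi_0^*\omega=\omega_0$ and $\Psi_0^*\lambda=\lambda_0$, i.e.\ $\Psi_0$ is a symplectomorphism preserving the Liouville form. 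For the last claim, given $B\in\U(n)$ near the identity the plane $B\cdot P_j$ is transverse to the fibres of $T^*P_j$ (since a Lagrangian plane meets $i$ times itself only at $0$, and this is an open condition), hence equals $\Psi_j(\Gamma_\xi)$ for a $1$-form $\xi$ on $P_j\setminus\{0\}$ which is linear in $x$ and satisfies $|\xi(x)|\le\delta(B)|x|$ with $\delta(B)\to 0$ as $B\to\id$; once $\delta(B)<2\zeta$ we get $\Gamma_\xi\subset V_j$, so $B\cdot(P_j\setminus\{0\})=\Psi_j(\Gamma_\xi)\subset\Psi_j(V_j)\subset T_0$.

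I do not expect a genuine obstacle here. The only point deserving care is clarifying what ``the Liouville form on $T^*(L_0\setminus\{0\})$'' means: it is the common pull-back $\lambda_0=\Psi_1^*\lambda=\Psi_2^*\lambda$ on each component, which differs from $-2\tau$ by the exact term $\d(\sum_j x_jy_j)$, and the content of the preservation statement is precisely the $\U(n)$-invariance of $\lambda$ on $\C^n$ that forces $\Psi_1$ and $\Psi_2$ to induce the same $\lambda_0$.
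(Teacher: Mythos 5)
Your proof is correct and follows exactly the route the paper intends: the paper offers no written proof of this lemma, treating all of its clauses as immediate from the choice of $\zeta$ in \eqref{Vjeq} and the definition $\Psi_2=A\circ\Psi_1$, and your verification simply makes that explicit. Your closing clarification — that "preserving the Liouville form" amounts to the $\U(n)$-invariance of $\lambda$ forcing $\Psi_1$ and $\Psi_2$ to induce the same form $\lambda_0$ on $T^*(L_0\setminus\{0\})$, which differs from $-2\tau$ by an exact term — is exactly the point the paper relies on later when it uses $\Psi_0$ to conclude exactness of $\Phi^*\lambda-\lambda_L$.
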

 We now use $V_0$, $T_0$, and $\Psi_0$ to construct our tubular neighbourhoods of $L$.  The point will be to ensure that the
 symplectomorphism we construct is compatible with the standard symplectomorphism 
\eq{sympleq} over the planes. 

 By Theorem \ref{thm.asympt}, there is $R_1>0$ and an open set $B\subset \C^n$ with compact closure so that $$L\setminus B=\{x+J\overline\nabla\psi(x):x\in L_0\setminus B_{R_1}\}$$
for some $\psi\in C^{\infty}(L_0\setminus B_{R_1})$ whose $C^{2,\alpha}$ norm decays exponentially.  In particular, we may assume by making $R_1$  larger if necessary that 
$|\d\psi(x)|<\zeta|x|$ for all $x\in L_0\setminus B_{R_1}$.  Let 
\begin{equation}\label{pi.eq}
\pi:L_0\setminus B_{R_1}\rightarrow L\setminus B, \quad\pi(x)=x+J\overline\nabla\psi(x)
\end{equation}
 so that $\pi^{*}:T^*(L\setminus B)\rightarrow T^*(L_0\setminus B_{R_1})$ is an isomorphism.   %We now deduce the following.

\begin{prop}\label{tubenbdthm}  Recall the notation of Lemma \ref{conenbdlem}.  
There exist 
\begin{itemize}
\item  open neighbourhoods $\hat V\subset V$ of $L$ in $T^*L$;
\item open %(asymptotically conical) 
 tubular neighbourhoods $\hat T\subset T$ of $L$ in $\C^n$;
 \item an exact symplectomorphism $\Psi:V\rightarrow T$ with $\Psi|_L=\id_L$ and $\hat T=\Psi(\hat V);$
 \end{itemize}
 such that 
\begin{equation}\label{Veq}
\pi^*(V)=\big\{\big(x,\xi(x)\big)\in T^*(L_0\setminus B_{R_1}):|\xi(x)|<\zeta|x|\big\}\subseteq V_0,
\end{equation}
$$\pi^*(\hat{V})=\big\{\big(x,\xi(x)\big)\in T^*(L_0\setminus B_{R_1}):|\xi(x)|<\frac{1}{2}\zeta|x|\big\},$$
 and
\begin{equation}\label{Psieq}
\Psi\circ{(\pi^*)}^{-1}\big(x,\xi(x)\big)=\Psi_0\big(x,\d\psi(x)+\xi(x)\big)\quad\text{for all }(x,\xi)\in\pi^*(V). %\Psi\circ{(\pi^*)}^{-1}\big(x,\xi(x)\big)
\end{equation}
\end{prop}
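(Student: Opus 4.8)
The plan is to build the symplectomorphism $\Psi$ in two pieces — one over the non-compact region where $L$ is graphical over $L_0$, and one over a compact piece — and then glue them via a Moser-type argument. First I would work over $L_0\setminus B_{R_1}$, where $\pi$ in \eqref{pi.eq} identifies this region with $L\setminus B$. Pulling back $\Psi_0$ from Lemma \ref{conenbdlem} along $\pi$ and using the map $\xi\mapsto \d\psi + \xi$, which is simply a fibrewise translation in $T^*(L_0\setminus B_{R_1})$, one obtains an exact symplectomorphism from a conical neighbourhood $\pi^*(V)$ onto a tubular neighbourhood of $L\setminus B$ in $\C^n$; the defining formula is exactly \eqref{Psieq}. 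The bound $|\d\psi(x)|<\zeta|x|$, guaranteed by the exponential decay in Theorem \ref{thm.asympt} after enlarging $R_1$, ensures the translated sets $\{|\xi|<\zeta|x|\}$ and $\{|\xi|<\tfrac12\zeta|x|\}$ still land inside $V_0$, so $\Psi_0$ is defined there and \eqref{Veq} holds. That $\Psi_0$ preserves the Liouville form, together with the fact that fibrewise translation by an exact one-form $\d\psi$ changes the tautological one-form $\tau$ by an exact form, gives that this piece of $\Psi$ is an exact symplectomorphism and restricts to the identity on $L\setminus B$ (where $\xi=0$).

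Next I would extend over the compact part. On a neighbourhood of $L\cap \bar B$ in $\C^n$ apply the Weinstein Lagrangian neighbourhood theorem to produce \emph{some} symplectomorphism from a neighbourhood of the zero section in $T^*L$ onto a tubular neighbourhood of $L$ in $\C^n$ restricting to the identity on $L$. The two constructions — the explicit graphical one and the Weinstein one — need not agree on the overlap annular region, so the key step is to interpolate between them. Both are symplectomorphisms fixing $L$ pointwise and inducing the identity on the normal bundle along $L$ (since $dJ\overline\nabla\psi$ vanishes on $L$), so on a sufficiently narrow neighbourhood of the overlap their composition is $C^1$-close to the identity; a cutoff in the radial variable together with Moser's trick (solving $\dot\omega_t=-d\iota_{X_t}\omega_t$ with the interpolating family of closed two-forms) produces the gluing symplectomorphism, and one checks exactness is preserved because the relevant primitive one-forms differ by exact forms throughout the isotopy. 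Shrinking to $\hat V\subset V$ and $\hat T\subset T$ as in the statement absorbs the constants lost in the cutoff.

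The main obstacle is the gluing step: making the Moser interpolation \emph{exact} (not merely symplectic) and compatible with the precise normalizations \eqref{Veq}–\eqref{Psieq} on the conical end, rather than just producing an abstract Weinstein neighbourhood. One must track the primitives of the Liouville forms carefully — the paper already flagged that $\tau\neq\lambda$ — and verify that the cutoff does not destroy the identity $\Psi|_L=\id_L$ nor the inclusions of neighbourhoods. Everything else (the fibrewise-translation computation, the bounds coming from exponential decay, the application of Weinstein's theorem) is routine; I would present the conical-end construction in detail and treat the compact gluing as a standard Moser argument.
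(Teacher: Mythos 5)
Your construction on the conical end is exactly the paper's: define $\Psi$ there by the formula \eqref{Psieq}, note that $\pi^*$ preserves the tautological one-form and that fibrewise translation by the exact form $\d\psi$ changes $\tau$ by an exact form, and use the decay of $\psi$ from Theorem \ref{thm.asympt} (after enlarging $R_1$) to keep everything inside $V_0$. Where you diverge is on the compact piece. You invoke the Weinstein Lagrangian neighbourhood theorem to get a second symplectomorphism there and then glue two symplectomorphisms along an overlap annulus; the paper instead extends the end construction by an \emph{arbitrary} diffeomorphism (the normal exponential map, using $T^*L\cong NL$), observes that the resulting $\Phi$ already satisfies $\Phi^*\lambda-\lambda_L=\d\alpha_L$ over $L\setminus B$, cuts off $\alpha_L$, and runs a single Moser deformation at the level of the one-forms $\lambda_t=(1-t)(\lambda_L+\d\alpha_L)+t\Phi^*\lambda$. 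That version buys two things for free: the Moser vector field vanishes identically wherever $\Phi$ is already an exact symplectomorphism, so compact support and the preservation of \eqref{Psieq} outside a compact set are automatic; and exactness is built into the interpolation rather than checked afterwards. Your route can be made to work, but the gluing step is the delicate point: the graphical symplectomorphism does \emph{not} induce exactly the identity on $NL$ under the identification $T^*L\cong NL$ (the induced metric on the graph differs from that on $L_0$, so this holds only up to errors controlled by the decay of $\psi$), so the "$C^1$-close to the identity" claim needs that quantitative input, and the radial cutoff must be confined to a compact annulus if \eqref{Psieq} is to survive on the whole of $\pi^*(V)$. Running one global Moser argument, as the paper does, sidesteps both issues.
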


\begin{proof}
Recall that $L$ is %exact by Lemma \ref{selfexplem1} and 
embedded by Lemma \ref{embedlem}.  

Since $L$ is Lagrangian, $T^*L\cong NL$ so we may consider the exponential map $\exp$ acting on $T^*L$.  
Given any compact $K\subset L$, we may apply the usual tubular neighbourhood theorem to give open neighbourhoods of $K$ in $T^*K$ and $\C^n$ 
 which are diffeomorphic via $\exp$.  Moreover, $\exp$ and its derivative act as the identity on $K$.  In particular, $\exp^*\lambda=\lambda_L$ on $K$, 
 where $\lambda_L$ is the Liouville form on $T^*L$.

We may define $V$ and $\Psi$ over $L\setminus B$ via \eq{Veq} and \eq{Psieq} so that $\Psi(V)$ is a tubular neighbourhood of 
$L\setminus B$. Using the exponential map over the remainder of $L$, we can extend to open neighbourhoods $V$, $T$ of $L$ in $T^*L$ and $\C^n$ and a
 diffeomorphism $\Phi:V\rightarrow T$ such that $\Phi$ and $\Psi$ agree over $L\setminus B$.  

As $\pi$ in \eq{pi.eq} is a diffeomorphism, the map $\pi^*:T^*(L\setminus B)\rightarrow T^*(L_0\setminus B_{R_1})$ is a symplectomorphism preserving the tautological 
1-form (see, for example, \cite[Theorem 2.1]{symplectic}).  Since $\Psi_0$ preserves the Liouville form, we see  that on $L\setminus B$ we have 
$\Phi^*\lambda-\lambda_L=\d\alpha_L$ for some smooth function $\alpha_L$.  We can smoothly cut-off the function $\alpha_L$ so that it is
defined on $T^*L$ and vanishes over a compact subset of $L$.  Hence $\Phi$ is an exact symplectomorphism outside some compact set.  The idea now is to 
essentially use Moser's trick to perturb $\Phi$ over a compact set to a global exact symplectomorphism. 

Define $$\lambda_t=(1-t)(\lambda_L+\d\alpha_L)+t\Phi^*\lambda,$$ so that $$\d\lambda_t=2(1-t)\omega_L+2t\Phi^*\omega$$ is a closed nondegenerate 2-form on $V$ for all 
$t\in[0,1]$.  Using nondegeneracy, we can uniquely solve 
$$X_t\lrcorner \d\lambda_t=\lambda_L+\d\alpha_L-\Phi^*\lambda$$
pointwise for $X_t$.  Since $\Phi^*\lambda=\lambda_L+\d\alpha_L$ over $L\setminus B$, we see that $X_t$ is zero outside a compact set, {so} we 
may solve for a smooth vector field $X_t$ on $V$ for all $t\in[0,1]$ by shrinking $V$ if necessary.  Moreover, 
$L$ (viewed as the zero section) is Lagrangian with respect to $\omega_L$ and $\Phi^*\omega$, {so we may deduce that} $\d\lambda_t|_L=0$ and {hence} $X_t|_L=0$ as well.   

Define diffeomorphisms $f_t$ on $V$ such that $f_0=\id$ and $\frac{\d}{\d t}f_t=X_t\circ f_t$.  Then
$$\frac{\d}{\d t}f_t^*\lambda_t=f_t^*\big(\Phi^*\lambda-\lambda_L-\d\alpha_L+\d(X_t\lrcorner\lambda_t)+X_t\lrcorner\d\lambda_t\big)
=\d f_t^*(X_t\lrcorner\lambda_t).$$
We deduce that $f_1^*\Phi^*\lambda-\lambda_L=f_1^*\lambda_1-f_0^*\lambda_0+\d\alpha_L$ is exact.  Moreover the diffeomorphism 
$f_1$ acts as the identity on $L$ and on $V$ over $L\setminus B$.  Hence we have an exact symplectomorphism $\Psi=\Phi\circ f_1:V\rightarrow T$ 
which satisfies \eq{Veq} and \eq{Psieq}.
%  We may then extend $V$ and $\Psi$ over the 
%compact set $B$ using the Lagrangian Neighbourhood Theorem.  A standard generalisation of the Lagrangian Neighbourhood Theorem (see, for example, \cite{Weinstein}), 
%using the exactness of $L$, allows us to ensure that $\Psi$ is an 
%exact symplectomorphism.
The remainder of the proposition follows by taking appropriate open subsets of $V$ and $T$.
\end{proof}

Write $C^{2}_{\loc}(V)$ and $C^2_{\loc}(\hat{V})$ for the space of locally $C^2$ 1-forms $\Xi$ with graph $\Gamma_{\Xi}\subseteq V$ and $\Gamma_{\Xi}\subseteq \hat{V}$ respectively.  We use similar notation for $C_{\loc}^{\infty}(V)$.
%and $\Gamma_{\Xi}\subseteq \hat{V}$ respectively.   $C^{2}_{\loc}(\hat V)$ and $C^{\infty}(V)$, respectively,  for the space of locally $C^2$ and smooth $1$-forms $\Xi$ with graph $\Gamma_{\Xi}\subseteq V$.
% and $C^1_{\loc}(\hat{V})$ for the space of locally $C^1$ 1-forms $\Xi$ with graph $\Gamma_{\Xi}\subseteq V$ 
%and $\Gamma_{\Xi}\subseteq \hat{V}$ respectively.  
For $\Xi\in C^2_{\loc}(V)$ we define a $C^2$-embedding $f^{\Xi}:L\rightarrow T$ by $$f^{\Xi}(x)=\Psi\big(x,\Xi(x)\big)$$ so that $f^{\Xi}(L)$ is the
 deformation of $L$ given by $\Xi$.     
 
 %An immediate consequence of Proposition \ref{tubenbdthm} is that we can view $f^{\Xi}(L)$,
 %outside a compact set, both as the graph of $\Xi$ over $L\setminus K_0$ and as the graph of $\pi^*(\Xi)+\d\psi$ over $L_0\setminus B_{R_1}$. 
 %Moreover, Proposition \ref{tubenbdthm} implies that if $\Xi\in C^1_{\loc}(\hat{V})$ and $|s|<\hat{\zeta}_0$ then we may define a $C^1$-embedding
%$f^{\Xi,s}:L\rightarrow T$ by 
%$$f^{\Xi,s}(x)=\Psi\big(x,\Xi(x)+\d\hat{\chi}_s(x)\big).$$

We note that  $f^{\Xi}(L)$ is Lagrangian if and only if $\d\Xi=0$.  
 %
% By construction, every normal Lagrangian deformation of $L$ in $T$ is of the form $f^{\Xi}(L)$ for some closed 1-form $\Xi\in C^{\infty}(V)$.  
However, we want to restrict ourselves to exact zero-Maslov class deformations $f^{\Xi}(L)$ since, by Lemma \ref{selfexplem1}, we know that if $f^{\Xi}(L)$ is a self-expander it must be exact.  
%Therefore, it suffices to consider exact zero-Maslov class deformations $f^{\Xi}(L)$ of $L$.  
This motivates the next lemma.

\begin{lemm}\label{exactlem}  
%If $\phi\in C^2_{\loc}(L)$ such that $\d\phi\in C^1_{\loc}(V)$ then $L^{\phi}=f^{\d\phi}(L)$ is exact and zero-Maslov class.
Let $L^{\prime}=f^{\Xi}(L)$ with $\Xi\in  C^2_{\loc}(V).$  Then $L^{\prime}$ is exact and 
zero-Maslov class if and only if $\Xi=\d\phi$ for some $\phi\in C_{\loc}^{3}(L)$ with $\Gamma_{\d\phi}\subseteq V$.

Moreover, if we set
\begin{itemize}
\item $\bar \phi$ defined on $T$ so that $\bar\phi\big(\Psi(x,\xi)\big)=\phi(x)$,
%\item $X_s$ a vectorfield on $L_s=f^{s\d\phi}(L)$ so that $X_s|_{f^{s\d\phi}(x)}=\frac{d}{ds}|_{s=0}f^{s\d\phi}(x),$
\item $H(L^s)$ the mean curvature of  $L^s=f^{s\d\phi}(L)$ and $\Delta^s$ the pullback to $L$ of the Laplacian on $L^s$,
\item $\theta^{\phi}$ the pullback to $L$ of the Lagrangian angle of $f^{\d\phi}(L)$, and
\item $\beta^{\phi}$ the pullback to $L$ of the primitive of the Liouville form of $f^{\d\phi}(L)$,
\end{itemize}
then  $\theta^{\phi}$ and $\beta^{\phi}$ can be given, respectively, by
$$\theta^{\phi}(x)=\theta(x)+\int_0^1\Delta^s \phi(x)-\langle H(L^s),\overline \nabla \bar \phi\rangle|_{f^{s\d\phi}(x)}\d s$$
and
$$\beta^{\phi}(x)=\beta(x)-2\phi(x)+\int_0^1\langle \bfx,\overline \nabla \bar \phi\rangle|_{f^{s\d\phi}(x)}\d s.$$

We also have
$$\left.\frac{\d\theta^{s\phi}}{\d s}\right|_{s=0}=\Delta\phi-\langle H,\overline \nabla \bar \phi\rangle$$
and
$$\left.\frac{\d\beta^{s\phi}}{\d s}\right|_{s=0}=-2\phi+\langle \bfx, \nabla\phi \rangle+\langle H,\overline \nabla \bar \phi\rangle.$$
\end{lemm}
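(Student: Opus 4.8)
The plan is to treat the deformation $L^s = f^{s\d\phi}(L)$ as a one-parameter family of Lagrangians and to track the Lagrangian angle and Liouville primitive along the family by differentiating in $s$. First I would establish the equivalence ``$L' = f^{\Xi}(L)$ exact and zero-Maslov class $\iff$ $\Xi = \d\phi$''. One direction is immediate from the remark preceding the lemma: $f^{\Xi}(L)$ is Lagrangian iff $\d\Xi = 0$, and $\Xi$ closed means $\Xi = \d\phi$ locally. The content is that the pullback of the Liouville form $\lambda|_{L'}$ differs from $\d(\text{something in }\phi)$ by an exact form, so $L'$ exact forces $\Xi$ to be globally exact; this uses that $\Psi$ is an \emph{exact} symplectomorphism (Proposition \ref{tubenbdthm}), so $\Psi^*\lambda - \lambda_L$ is exact, and the tautological-one-form computation showing $\Gamma_{\d\phi}^*\lambda_L$ is exact iff it equals $\d\phi$ up to the position-vector correction already seen in the proof of Proposition \ref{graphdecay}.

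Next, for the formulas, I would fix $x \in L$ and set $x_s = f^{s\d\phi}(x)$, a curve in $T$ with velocity $V_s = \frac{\d}{\d s} x_s$. The key observation is that $V_s$ is a \emph{normal} vector field along $L^s$ whose value is essentially $J\overline\nabla\bar\phi$ (restricted and projected): this is how a Hamiltonian/exact deformation of a Lagrangian moves. Then I would invoke the standard first-variation formulas for the Lagrangian angle and for the Liouville primitive under a normal variation of a Lagrangian. For the Lagrangian angle: if $L^s$ moves with normal speed $W_s$, then $\frac{\d}{\d s}\theta^s = \mathrm{div}_{L^s}(JW_s)$ type identity, which here becomes $\frac{\d}{\d s}\theta^{s\phi} = \Delta^s\phi - \langle H(L^s), \overline\nabla\bar\phi\rangle|_{x_s}$ after writing $W_s$ in terms of $\overline\nabla\bar\phi$ and accounting for the tangential part via $\langle H, \overline\nabla\bar\phi\rangle$. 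For the Liouville primitive: differentiate $\d\beta^{s} = \lambda|_{L^s}$ and use $\d\lambda = 2\omega$ together with $\omega(V_s, \cdot) = -\d(\text{Hamiltonian})$ to get $\frac{\d}{\d s}\beta^{s\phi} = -2\phi + \langle \bfx, \overline\nabla\bar\phi\rangle|_{x_s}$, where the $-2\phi$ comes from the $\d\lambda = 2\omega$ normalization and $\langle\bfx,\overline\nabla\bar\phi\rangle$ from the flux term. Integrating these two ODEs from $0$ to $1$, with initial data $\theta^0 = \theta$ and $\beta^0 = \beta$, yields the stated integral formulas for $\theta^\phi$ and $\beta^\phi$; evaluating the derivative at $s=0$ and using $x_0 = x$, $\Delta^0 = \Delta$, $H(L^0) = H$ gives the last two displayed identities, where the sign bookkeeping between $\langle\bfx,\nabla\phi\rangle$ (tangential gradient on $L$) and $\langle\bfx,\overline\nabla\bar\phi\rangle$ (ambient gradient) accounts for the $+\langle H,\overline\nabla\bar\phi\rangle$ term in the $\beta$-derivative.

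The main obstacle I expect is the careful identification of the variation vector field $V_s = \frac{\d}{\d s} f^{s\d\phi}(x)$ with the ``correct'' multiple of $\overline\nabla\bar\phi$, i.e. verifying that the symplectomorphism $\Psi$ of Proposition \ref{tubenbdthm} — built by Moser's trick and only normalized to be exact and to agree with $\Psi_0$ at infinity — genuinely converts the graph deformation $\Xi = s\d\phi$ into a Hamiltonian deformation with Hamiltonian compatible with $\bar\phi$, so that the tangential/normal decomposition of $V_s$ produces exactly the $\Delta^s\phi$ and $\langle H(L^s),\overline\nabla\bar\phi\rangle$ (resp.\ $\langle\bfx,\overline\nabla\bar\phi\rangle$) terms with no extra curvature-of-$\Psi$ corrections. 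This is where exactness of $\Psi$ and the fact that $\Psi|_L = \id_L$ with $\d\Psi$ acting as the identity on $L$ (from the exponential-map construction) are essential; once that is pinned down, the remaining computations are routine applications of the first-variation formulas for $\theta$ and $\beta$ together with $H = J\nabla\theta$ and $\d\beta = \lambda|_L$.
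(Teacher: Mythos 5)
Your proposal follows essentially the same route as the paper: exactness of $f^{\Xi}(L)$ is reduced to exactness of $\Xi$ via the tautological one-form and the exactness of $\Psi$, the deformation field is identified as the Hamiltonian vector field $J\overline\nabla\bar\phi$ using that $\Psi$ is a symplectomorphism (which resolves the ``main obstacle'' you flag), and the two evolution equations (the Thomas--Yau first variation for $\theta$ with the tangential correction, and Cartan's formula applied to $\lambda$ with $\d\lambda=2\omega$) are integrated in $s$ from the initial data $\theta^0=\theta$, $\beta^0=\beta$. The only point you leave implicit is that the integrated formula for $\theta^{\phi}$ is itself what shows the zero-Maslov condition imposes no constraint beyond $\Xi=\d\phi$, which is exactly how the paper closes that loop as well.
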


\begin{proof}
We first show that  exactness of $f^{\Xi}(L)$ corresponds to exactness of $\Xi$. It suffices to see that $(f^{\Xi})^*(\sum_{i=1}^ny_i\d x_i)$ is exact because that differs from  $-(f^{\Xi})^*(\lambda/2)$ by an exact form.  Since $\Psi$ is an exact symplectomorphism 
there exists a function $\alpha_L$ on $T^*L$ such that $\Psi^*(\sum_{i=1}^ny_i\d x_i)=\tau+\d\alpha_L$.   By definition, $f^{\Xi}=\Psi\circ\Xi$ so 
  $$(f^{\Xi})^*\left(\sum_{i=1}^ny_i\d x_i\right)=\Xi^*\circ\Psi^*\left(\sum_{i=1}^ny_i\d x_i\right)=\Xi^*(\tau+\d\alpha_L)=\Xi+\d\Xi^*(\alpha_L).$$    
Hence $f^{\Xi}(L)$ is exact if and only if $\Xi$ is exact.  

We now compute the stated identities for $\beta^{\phi}$.
 Consider the vector field on $T^*L$ given by $$\tilde X|_{(x,\xi)}=\big(0,\d\phi(x)\big)\in T_{(x,\xi)}(T^*L),$$ the function $\tilde \phi$ on $T^*L$ given by $\tilde \phi(x,\xi)=\phi(x)$, and $X=\Psi_{*}(\tilde X)$ a vector field on $T$.  We have $\tilde X\lrcorner\Psi^*\omega=-\d\tilde \phi$ and so, because $\bar \phi\circ\Psi=\tilde \phi$, we obtain $X=J\overline \nabla \bar \phi$ on $T$. As a result we have
\begin{equation*}\label{defor.angle}
\frac{\d}{\d s}f^{s\d\phi}(x)=\Psi_{*}|_{(x,s\d\phi)}(\tilde X)=X|_{f^{s\d\phi}(x)}=J\overline \nabla\bar \phi|_{f^{s\d\phi}(x)}.
\end{equation*}

Therefore we obtain
 \begin{align*}
 \frac{\d}{\d s}(f^{s\d\phi})^{*}\lambda=(f^{s\d\phi})^{*}\mathcal{L}_{J\overline \nabla \bar\phi}\lambda &=(f^{s\d\phi})^{*}\left(\d(J\overline \nabla \bar\phi\lrcorner\lambda)+J\overline \nabla \bar\phi\lrcorner2\omega\right).\\
&=(f^{s\d\phi})^{*}\d(\langle\bfx,\overline \nabla \bar\phi\rangle)-2\d\phi.
\end{align*}
In light of this formula we see that if we define 
$$\beta^{\phi}(x)=\beta(x)-2\phi(x)+\int_0^1\langle \bfx,\overline \nabla \bar \phi\rangle|_{f^{s\d\phi}(x)}\d s,$$
we have $(f^{\d\phi})^{*}\lambda=\d\beta^{\phi}$ and, since $L$ is a self-expander,
 \begin{align*}\left.\frac{\d\beta^{s\phi}}{\d s}\right|_{s=0}=-2\phi+\langle \bfx, \overline \nabla\bar\phi \rangle
 &=-2\phi+\langle \bfx, \nabla\phi \rangle+\langle \bfx^{\bot}, \overline\nabla\bar\phi \rangle\\
 &=-2\phi+\langle \bfx, \nabla\phi \rangle+\langle H, \overline\nabla\bar\phi \rangle.
 \end{align*}
 
We now show that the zero-Maslov class condition imposes no condition on $\Xi=\d\phi$. We have $(f^{s\d\phi})^*(\Omega)=e^{i\theta^s}(f^{s\d\phi})^*\vol_{L^s}$ where $e^{i\theta^s}$ is an ${S}^1$-valued function on $L$. 
 %and $(f^{s\d\phi})^*(\lambda)=\d\beta^s$ where $\beta^s\in C^{\infty}(L)$.  
If the deformation vector $X$ were orthogonal to $L^s$, we would have by  \cite[Lemma 2.3]{ThomasYau} that
$$\frac{\d\theta^s}{\d s}=\Delta^s\phi.$$
The fact that $X$ might have a tangential component along $L^s$ implies
\begin{multline*}
\frac{\d\theta^s}{\d s}=\Delta^s\phi+\langle X,(f^{s\d\phi})_{*}\nabla\theta^s\rangle=\Delta^s\phi+\langle JX,H(L^s)\rangle\\
=\Delta^s\phi-\langle H(L^s),\overline \nabla \bar \phi\rangle.
\end{multline*}
Therefore, integrating this equation for $\theta^s$ together with the 
initial condition $\theta^0=\theta$ allows us to define a (single-valued) function on each $L^{s}$ which is the Lagrangian angle.  
 In particular, $f^{\d\phi}(L)$ has zero-Maslov class and we can set
 $$\theta^{\phi}(x)=\theta(x)+\int_0^1\Delta^s \phi(x)-\langle H(L^s),\overline \nabla \bar \phi\rangle|_{f^{s\d\phi}(x)}\d s.$$
The equation for $\frac{\d}{\d s}\theta^{s\phi}$ was computed above.
 \end{proof}

\subsection*{Proof of Theorem \ref{local.uniqueness}}

Consider smooth rotations $P_1(s)$ and $P_2(s)$ of the planes $P_1$ and $P_2$ respectively, so that the planes remain Lagrangian and the difference of their Lagrangian angles stays the same.
 In that case we  find a one parameter family of matrices $B_1(s), B_2(s)\in \U(n)$ with 
 \begin{itemize}
 \item $P_1(s)=B_1(s)\cdot P_1$, and $P_2(s)=B_2(s)\cdot P_2$,
 \item $B_1(s)B_2^{-1}(s)\in \SU(n)$ and $B_1(0)=B_2(0)=\id$,
 \item $\det B_1(s)=\det  B_2(s)=e^{i\theta(s)},$ where $\theta(s)$ is a smooth function with $\theta(0)=0$.
\end{itemize} 

Consider $G_s:\C^n\rightarrow \C^n$, a one-parameter group of diffeomorphisms with $G_0=\id$,
$$G_s(x)=B_1{(s)}(x)\mbox{ on }\Psi_1(V_1)\setminus B_{R_1}\!\quad\mbox{and}\quad\!  G_s(x)=B_2{(s)}(x)\mbox{ on } \Psi_2(V_2)\setminus B_{R_1}.$$

From Proposition \ref{tubenbdthm} we can find $s_0$ so that for all $|s|\leq s_0$ and $\d\phi\in C_{\loc}^3(\hat V)$ we have $G_s\circ f^{\d\phi}(L)\subset T$.
In this case we define the $C^2$ embedding
$$f^{\d\phi,s}:L\rightarrow T,\quad f^{\d\phi,s}(x)=G_s\circ\Psi\big(x,\d\phi(x)\big).$$

  Given $\phi\in C_{\loc}^3(L)$ so that $\d\phi\in C^2_{\loc}(\hat{V})$ and $|s|\leq s_0$, we set $L^{\phi,s}=f^{\d\phi,s}(L)$. Thus, by Lemma \ref{exactlem} we know that $L^{\phi,s}$ is an exact zero-Maslov class Lagrangian and we consider its Lagrangian angle $\theta^{\phi,s}$ and primitive for the Liouville form $\beta^{\phi,s}$ pulled-back to $L$, which are given by Lemma \ref{exactlem}. 

If $\phi$ has strong enough decay then $L^{\phi,s}$ is asymptotic to  $L^s_0=P_1(s)+P_2(s)$.
 For simplicity we write $L^{\phi}=L^{\phi,0}$, $\beta^{\phi}=\beta^{\phi,0}$ and $\theta^{\phi}=\theta^{\phi,0}$.

Lemma \ref{selfexplem1} shows that $L^{\phi,s}$ is a self-expander 
with $H=\bfx^{\bot}$ if and only if $\beta^{\phi,s}+\theta^{\phi,s}$ is constant. 
 This motivates our definition of a deformation map. 

\begin{dfn}\label{defmapdfn} We define a function $\hat{F}$ on functions $\phi\in C^3_{\loc}(L)$ such that $\d\phi\in C^2_{\loc}(\hat{V})$ and $|s|<s_0$ by 
$$\hat{F}(\phi,s)=\beta^{\phi,s}+\theta^{\phi,s}-\theta(s).$$  
We also let $F(\phi)=\hat{F}(\phi,0)$.
%If $\d\phi\in C^1_{\loc}(\hat{V}$ and $|s|<\hat{\zeta}_0$ we define $\hat{F}$ by
%$$\hat{F}(\phi,s)=(f^{\d\phi,s})^*(\beta^{\phi,s}+\theta^{\phi,s}).$$
\end{dfn} 

%Observe that if $\phi$ has decay at infinity then $L^{\phi,s}$ is asymptotic to 
%$L_0^s=P_1\cup(B(s)\cdot P_2)$.  Therefore the $\phi$ and $s$ for which $\hat{F}(\phi,s)$ is constant parameterise 
%the zero-Maslov class Lagrangian self-expanders near $L$ which are asymptotic to $L_0^s$.

%By Lemma \ref{selfexplem2}, $F(\phi)$ is a constant if and only if $L^\phi$ is a self-expander satisfying $H=\bfx^\bot$.
We now compute the linearisations of $F$ and $\hat{F}$ at zero, whose kernels will govern infinitesimal deformations of $L$.  
We have in fact already encountered the key operator in \eq{Leq}.

\begin{lem}\label{linlem} For $\phi\in C^2_{\loc}(L)$ and $s\in\R$, 
\begin{align*}
\d F|_0(\phi)&=\mathcal{L}(\phi)=\Delta\phi+\langle\bfx,\nabla\phi\rangle-2\phi\quad\text{and}\\
\d \hat{F}|_{(0,0)}(\phi,s)&=\mathcal{L}(\phi)+s\gamma
\end{align*} where 
$\gamma$ is a smooth function with compact support.%\in H^{k}_*(L)$ for all $k\in\N$. %$\frac{\d\hat{\chi}_s}{\d s}|_{s=0}=\gamma$.  Moreover, 
\end{lem}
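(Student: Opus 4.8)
The plan is to deduce both linearisations from the variational identities recorded at the end of Lemma \ref{exactlem}. For $\d F|_0$: by Definition \ref{defmapdfn} and $\theta(0)=0$ we have $F(\phi)=\hat F(\phi,0)=\beta^{\phi}+\theta^{\phi}$, so $\d F|_0(\phi)=\frac{\d}{\d s}|_{s=0}(\beta^{s\phi}+\theta^{s\phi})$. The last two displays of Lemma \ref{exactlem} give
$$\frac{\d}{\d s}\Big|_{s=0}\theta^{s\phi}=\Delta\phi-\langle H,\overline\nabla\bar\phi\rangle\quad\text{and}\quad\frac{\d}{\d s}\Big|_{s=0}\beta^{s\phi}=-2\phi+\langle\bfx,\nabla\phi\rangle+\langle H,\overline\nabla\bar\phi\rangle,$$
and adding these the two $\langle H,\overline\nabla\bar\phi\rangle$ terms cancel, leaving $\d F|_0(\phi)=\Delta\phi+\langle\bfx,\nabla\phi\rangle-2\phi=\mathcal{L}(\phi)$.

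For $\d\hat F|_{(0,0)}$, I would write the differential at $(0,0)$ as the sum of its partial derivatives, $\d\hat F|_{(0,0)}(\phi,s)=\d\hat F|_{(0,0)}(\phi,0)+s\,\partial_s\hat F|_{(0,0)}$; since $\hat F(\,\cdot\,,0)=F$, the first term equals $\d F|_0(\phi)=\mathcal{L}(\phi)$ by the previous step. Hence $\d\hat F|_{(0,0)}(\phi,s)=\mathcal{L}(\phi)+s\gamma$ with $\gamma:=\partial_s\hat F|_{(0,0)}=\frac{\d}{\d s}|_{s=0}\hat F(0,s)$, and it remains to check that $\gamma$ is a smooth function with compact support. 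Smoothness is immediate: $s\mapsto L^{0,s}=f^{0,s}(L)=G_s(L)$ is a smooth family of embedded Lagrangians, so the pulled-back quantities $\theta^{0,s}$ and $\beta^{0,s}$ (given by the formulas of Lemma \ref{exactlem} applied to the deformation $f^{0,s}$), as well as $\theta(s)$, depend smoothly on $s$.

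To see that $\gamma$ has compact support, observe that outside $B_{R_1}$ the map $G_s$ acts as the unitary rotation $B_j(s)$ on the $j$-th end of $L$. Since $B_j(s)\in\U(n)$ preserves the Liouville form $\lambda$, and since $G_s$ may be chosen — using the freedom in its extension over the compact region, e.g.\ as the Hamiltonian flow of a function equal near each $P_j$-end to the quadratic Hamiltonian generating the rotations $B_j(s)$ — to be exact with Liouville primitive vanishing near both ends, one gets $\beta^{0,s}=\beta$ on $L\setminus B_{R_1}$. On the other hand $B_j(s)^*\Omega=(\det B_j(s))\,\Omega=e^{i\theta(s)}\Omega$ while $B_j(s)$ is an isometry, so the Lagrangian angle of $B_j(s)L$ is $\theta+\theta(s)$, and hence $\theta^{0,s}=\theta+\theta(s)$ on $L\setminus B_{R_1}$ once the branch continuous from $s=0$ is chosen. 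Therefore, on $L\setminus B_{R_1}$,
$$\hat F(0,s)=\beta^{0,s}+\theta^{0,s}-\theta(s)=\beta+\theta=\hat F(0,0),$$
the last equality because $L$ is a self-expander, so $\beta+\theta$ is constant by Lemma \ref{selfexplem1}(ii). Thus $\hat F(0,s)-\hat F(0,0)$ vanishes outside $B_{R_1}$ for all small $s$, so $\gamma$ is supported in the compact set $L\cap\overline{B_{R_1}}$, as required.

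The step I expect to be the main obstacle is exactly this compact-support assertion: one must rule out that $\beta^{0,s}$ acquires an $s$-dependent additive constant — possibly different on the two ends of $L$ — since $L$ has infinite volume and such a term would already fail $\gamma\in L^2(L)$. This is where the hypothesis $B_1(s)B_2(s)^{-1}\in\SU(n)$, i.e.\ $\det B_1(s)=\det B_2(s)=e^{i\theta(s)}$, enters (the Lagrangian-angle shift is the same $\theta(s)$ on both ends, so it is killed by the $-\theta(s)$ in $\hat F$), together with a suitable choice of the extension of $G_s$ over the compact set so that its Liouville primitive is trivial at infinity.
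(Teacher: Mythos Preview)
Your computation of $\d F|_0(\phi)=\mathcal{L}(\phi)$ and your decomposition $\d\hat F|_{(0,0)}(\phi,s)=\mathcal{L}(\phi)+s\gamma$ with $\gamma=\partial_s\hat F(0,s)|_{s=0}$ match the paper's argument. You are also right that the compact-support assertion is the crux, and your observation that $B_j(s)\in\U(n)$ preserves $\lambda$ (hence $\theta^{0,s}=\theta+\theta(s)$ and $\d\beta^{0,s}=\d\beta$ on each end) is exactly the paper's starting point.

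Where your argument falls short is the step from ``$\beta^{0,s}-\beta$ is locally constant on the ends'' to ``$\beta^{0,s}=\beta$ on the ends''. Your proposed resolution---choosing $G_s$ to be the Hamiltonian flow of a function equal to the quadratic generators $H_j$ near the ends, so that its ambient Liouville primitive vanishes there---does not close the gap as written. The function $\beta^{0,s}$ is not defined by restricting an ambient primitive of $G_s$ to $L$; it is the \emph{specific} primitive produced by the formula of Lemma \ref{exactlem} applied to the potential $\alpha_s$ with $f^{\d\alpha_s}=f^{0,s}$. An ambient primitive $\mu_s$ (with $G_s^*\lambda-\lambda=\d\mu_s$) agrees with $\beta^{0,s}-\beta$ only up to an additive constant on $L$, and you have not pinned that constant down. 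Moreover, the freedom you invoke---the extension of $G_s$ over the compact region---cannot affect the value of $c(s)=\beta^{0,s}-\beta$ on an end, since there $f^{0,s}$ is already determined by $B_j(s)$.

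The paper resolves this by a direct computation you are missing. Over $L_0\setminus B_{R_1}$ one writes $f^{0,s}\big(\pi(x)\big)=\Psi_0\big(x,\d\chi_s+\d\psi_s(x)\big)$, where $\chi_s$ is the potential whose gradient-graph over $P_j$ is the rotated plane $B_j(s)P_j$; because $B_j(s)$ is \emph{linear}, $\chi_s$ is a homogeneous quadratic polynomial, while $\psi_s$ decays exponentially. Feeding this into the formula for $\beta^{\phi}$ in Lemma \ref{exactlem} gives
\[
c(s)=\beta^{0,s}\big(\pi(x)\big)-\beta\big(\pi(x)\big)=-2\chi_s(x)+\langle x,\nabla\chi_s(x)\rangle+o(|x|^{-1}),
\]
and Euler's identity $\langle x,\nabla\chi_s\rangle=2\chi_s$ forces $c(s)=0$. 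So the vanishing of the end-constant is a consequence of the degree-$2$ homogeneity of the generating function of a linear unitary rotation, combined with the particular form $-2\phi+\langle\bfx,\nabla\phi\rangle$ of the $\beta$-variation, rather than of any freedom in how $G_s$ is interpolated over the compact set.
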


\begin{proof}
From Lemma \ref{exactlem} we have
 $$\d F|_0(\phi)=\frac{\d}{\d t}(\beta^{t\phi}+\theta^{t\phi})|_{t=0}= \Delta\phi+\langle\bfx,\nabla\phi\rangle-2\phi.$$
Thus $\d_1 \hat{F}|_{(0,0)}(\phi,s)=\mathcal{L}(\phi)$. 

We note that we can find $\alpha_s\in C_{\loc}^\infty(L)$ so that $L^{0,s}=f^{\d\alpha_s}(L)$ and thus, using Lemma \ref{exactlem} again, we obtain
$$ \d_2 \hat{F}|_{(0,0)}(\phi,s)=\frac{\d}{\d t}(\beta^{0,st}+\theta^{0,st})|_{t=0}-s\theta'(0)=s\big(\mathcal{L}(\alpha)-\theta'(0)\big)$$
for some $\alpha\in C_{\loc}^\infty(L)$. We now argue that $\gamma=\mathcal{L}(\alpha)-\theta'(0)$ has compact support and this finishes the proof of the lemma. 

On each connected component of  $T\setminus B_{R_1}$, $G_s$ belongs to $\U(n)$ and we have $\theta^{0,s}=\theta+\theta(s)$ and $\beta^{0,s}-\beta$  a constant $c(s)$ for all $|s|<s_0$. {Next we argue that $c(s)=0$ for all $|s|<s_0$, {which} implies {that} $$\hat F(0,s)=\theta^{0,s}+\beta^{0,s}-\theta(s)=\theta+\theta(s)+\beta-\theta(s)=0$$ outside a compact set, {so}  $\gamma$ {indeed has} compact support.

Recall the diffeomorphism $\pi$ given in \eq{pi.eq}.  From Proposition \ref{tubenbdthm} we see that, by choosing a larger $R_1$ if necessary,  we can find $\chi_s, \psi_s\in C^{\infty}(L_0\setminus B_{R_1})$ with $\chi_0=0,$ $\psi_0=\psi$, respectively, so that, for all $x\in L_0\setminus B_{R_1}$ and $|s|\leq s_0$, we have
$$
f^{0,s}\big(\pi(x)\big)=G_s\circ \Psi_0(x, \d\psi(x))=\Psi_0\big(x,\d\chi_s+\d\psi_s(x)\big).
$$
 Note that $\chi_s$ is a homogeneous quadratic polynomial and the $C^{2}$ norm of $\psi_s$ decays exponentially. Therefore  we have from  Lemma \ref{exactlem} that
$$c(s)=\beta^{0,s}\big(\pi(x)\big)-\beta\big(\pi(x)\big)=-2\chi_s+\langle x, \nabla \chi_s \rangle+o(|x|^{-1}).$$
{Since} $\chi_s$ is a homogeneous quadratic, we have $-2\chi_s+\langle x, \nabla \chi_s \rangle=0$ and thus $c(s)=0$ for all $|s|<s_0.$ 
}
\end{proof}

%Theorem \ref{thm.asympt} and Lemma \ref{exactlem} imply that any zero-Maslov Lagrangian self-expander in $T$ 
%with $H=\bfx^\bot$ which is a normal deformation of $L$ and asymptotic to $L_0^s$ will be of the form $L^{\phi,s}$ for some $\phi$, with exponential decay on all derivatives, satisfying $\hat{F}(\phi,s)$ is constant.  
%Such $\phi$ will necessarily lie in our Banach spaces $H^{k+2}_*(L)$ for which the Fredholm theory 
% of the linearisation $\mathcal{L}$ is well-behaved by the work in the previous section.  
 
We show that the nonlinear map $F$ is well-defined for $\phi$ in some open ball about zero in $H^{k+2}_*(L)$ for all $k$ large.

\begin{prop}\label{Fdfnprop}
If $k>1+\frac{n}{2}$, there exists $\varepsilon_0>0$ so that $$F:B_{\varepsilon_0}(0)\subset H^{k+2}_*(L)\rightarrow H^{k}(L)\quad\mbox{and}\quad
\hat{F}:B_{\varepsilon_0}(0)\times(-s_0,s_0)\rightarrow H^{k}(L)$$ 
are well-defined.
\end{prop}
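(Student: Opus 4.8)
The plan is to show that $F$ (and then $\hat F$, with only cosmetic modifications) maps a small $H^{k+2}_*$-ball into $H^k(L)$ by combining the integral formulas for $\theta^\phi$ and $\beta^\phi$ from Lemma \ref{exactlem} with the Sobolev embedding $H^{k+1}(L)\hookrightarrow C^1(L)$, which holds because $k+1>2+\tfrac n2>\tfrac n2+1$ and $L$ has bounded geometry (by Theorem \ref{thm.asympt}, every derivative of $A$ is bounded and the injectivity radius is bounded below). Concretely, $F(\phi)=\beta^\phi+\theta^\phi-\mathrm{const}$, and from Lemma \ref{exactlem} we have
$$
F(\phi)(x)=\mathrm{const}+\int_0^1\Big(\Delta^s\phi(x)+\langle\bfx,\overline\nabla\bar\phi\rangle|_{f^{s\d\phi}(x)}-2\phi(x)\Big)\d s,
$$
using $H(L^s)=(f^{s\d\phi})_*(\cdot)^\bot$-terms that cancel as in the proof of Lemma \ref{linlem}; so the task reduces to estimating, in $H^k(L)$, the difference $F(\phi)-\mathcal{L}(\phi)$, which is a sum of terms each at least quadratic in $\phi$ and its derivatives up to order two, with coefficients built from $\Psi$, $\Psi_0$, the geometry of $L$, and the position vector $\bfx$.

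First I would make this quantitative. Write $F(\phi)=\mathcal{L}(\phi)+Q(\phi)$ where, schematically, $Q(\phi)$ is a finite sum of terms of the form $a(x)\star P\big(\nabla^{\le2}\phi,\bfx^\top\big)$ in which $P$ is a polynomial vanishing to second order at the origin in the $\nabla^{\le 2}\phi$-slot, and $a(x)$ involves bounded derivatives of the (fixed) symplectomorphism data together with possibly one power of $\bfx$ (the factor $\langle\bfx,\overline\nabla\bar\phi\rangle$). The key observation, exactly as in the proof of Proposition \ref{Lctsprop}, is that the dangerous factor $\langle\bfx^\top,\cdot\rangle$ only ever appears paired against at most one derivative of $\phi$, and whenever it does appear it is multiplied by another factor of $\nabla^{\le 2}\phi$ that we can place in $C^1\subset L^\infty$ via Sobolev embedding. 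Thus each summand of $\nabla^j Q(\phi)$, for $j\le k$, after distributing derivatives via \eqref{bochner.fredholm}--\eqref{bochner2.fredholm}, is an $L^2$-function controlled by a product of $\|\phi\|_{C^1}$-type factors (one per "extra" copy of $\phi$ beyond the first two, all bounded by $\|\phi\|_{H^{k+1}}\le\|\phi\|_{H^{k+2}_*}$ once $\varepsilon_0$ is small) and one factor of the $H^{k+2}_*$-norm (which carries the remaining derivatives, including the single $\langle\bfx^\top,\nabla^{k+1}\phi\rangle$ term the starred norm is designed to absorb). This yields an estimate $\|Q(\phi)\|_{H^k}\le C(\|\phi\|_{H^{k+2}_*})\,\|\phi\|_{H^{k+2}_*}^2$ with $C$ continuous and increasing, so for $\|\phi\|_{H^{k+2}_*}<\varepsilon_0$ small we get $F(\phi)\in H^k(L)$.

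The main obstacle I expect is bookkeeping the factor of $\bfx$ in the term $\langle\bfx,\overline\nabla\bar\phi\rangle|_{f^{s\d\phi}(x)}$: after evaluating at the perturbed point $f^{s\d\phi}(x)$ rather than at $x$, and after differentiating $k$ times along $L$, one produces terms like $\langle\bfx^\top,\nabla^{k+1}\phi\rangle$ (fine — that is the $H^{k+2}_*$ norm), but also terms like $\bfx\star(\nabla\phi)\star(\nabla^{k}\phi)$ where the position vector is no longer contracted against a single derivative in the good way. Here one uses that such terms always carry a genuine second factor of $\nabla^{\le k}\phi$ which one estimates in $L^2$, times $\nabla\phi$ in $L^\infty$, times $|\bfx|$ — but $|\bfx|$ is unbounded on $L$, so this does not immediately close. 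The resolution is the same mechanism that makes $\mathcal{L}$ itself bounded: the $\bfx$-dependence enters only through $\bfx^\top$ paired with a $\phi$-derivative (because $\langle \bfx^\bot,\overline\nabla\bar\phi\rangle=\langle H,\overline\nabla\bar\phi\rangle$ and $H=\bfx^\bot$ has exponential decay by Theorem \ref{thm.asympt}, so the normal part is harmless), and $\langle\bfx^\top,\nabla^j\phi\rangle$-type quantities are precisely what the $H^k_*$ norm controls at the top order and what \eqref{bochner2.fredholm} keeps under control at lower orders; a careful induction on the number of derivatives landing on the $\bfx^\top$ factor, mirroring \eqref{induct.fredholm}, disposes of these. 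The argument for $\hat F$ is identical: $G_s$ and its derivatives in $s$ are bounded with bounded derivatives on the relevant region, $\theta(s)$ is a fixed smooth function of $s$, and the extra $s$-dependence contributes only smooth bounded multipliers, so $\hat F:B_{\varepsilon_0}(0)\times(-s_0,s_0)\to H^k(L)$ is well-defined by the same estimates, shrinking $\varepsilon_0$ and $s_0$ if necessary.
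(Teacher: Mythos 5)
Your proposal correctly isolates the danger --- the unbounded factor $|\bfx|$ in the nonlinear part of $\beta^{\phi}$ --- but it does not actually resolve it, and the mechanism you propose is not the one that works. You assert that inside the quadratic remainder ``the $\bfx$-dependence enters only through $\bfx^\top$ paired with a $\phi$-derivative'' and that an induction mirroring \eqref{induct.fredholm} then closes the estimate; but this structural claim is exactly what needs proof, and the starred norm only controls $\langle\bfx^\top,\nabla^j\phi\rangle$ in $L^2$ (not pointwise), so even granting the claim a product such as $\langle\bfx^\top,\nabla\phi\rangle\star\nabla^{k+2}\phi$ would not obviously be in $L^2$. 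The paper's resolution is different and essential: writing $F(\phi)=\mathcal{L}(\phi)+Q(x,\nabla\phi,\nabla^2\phi)$, one passes to the asymptotic region where $L$ is a graph over $L_0$ and computes \emph{explicitly} that over the planes the primitive of the Liouville form on the graph of $\d u$ is exactly $\langle\bfx,\nabla u\rangle-2u$ (so $Q_\beta\equiv 0$ there) and the Lagrangian angle is exactly $\sum_j\tan^{-1}(\mu_j)$, whence $Q_0=\sum_j\tan^{-1}(\mu_j)-\Delta(\phi_0+\psi)$. This shows the quadratic remainder has \emph{no} $\bfx$-growth whatsoever --- $Q$ and all its derivatives are uniformly bounded and vanish quadratically in $(y,z)=(\nabla\phi,\nabla^2\phi)$ --- which is a much stronger statement than the one you try to induct on, and without which your argument does not close.

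Two further points. First, your scheme for estimating $\nabla^jQ$ in $L^2$ (``one factor in $L^\infty$ via Sobolev, the rest carried by the $H^{k+2}_*$-norm'') fails for balanced distributions of derivatives when $n$ is large: e.g.\ for $j=k$ and a split $\nabla^{2+m_1}\phi\star\nabla^{2+m_2}\phi$ with $m_1\approx m_2\approx k/2$, putting one factor in $L^\infty$ requires roughly $k>n$, which does not follow from $k>1+\tfrac n2$. The paper instead applies H\"older with exponents $q_i=(j-a)/m_i$ and the Gagliardo--Nirenberg-type interpolation $\int_L|D^p\sigma|^{2s/p}\leq C\|\sigma\|_\infty^{2s/p-2}\|\sigma\|_{H^s}^2$ (Cantor), which is the correct tool here. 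Second, for $\hat F$ you must rule out that $\beta^{\phi,s}$ differs from $\beta^{\phi}$ by a nonzero constant outside a compact set --- a nonzero constant would put $\hat F(\phi,s)$ outside $L^2(L)$ entirely; the paper verifies $c(s)=0$ using that the generating function $\chi_s$ of the rotation is a homogeneous quadratic, so $-2\chi_s+\langle x,\nabla\chi_s\rangle=0$. Saying that $G_s$ contributes ``only smooth bounded multipliers'' does not address this.
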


\begin{proof}
If $k>1+\frac{n}{2}$ then the Sobolev Embedding Theorem implies that $H^{k+2}\hookrightarrow C^{3,\alpha}$ for some $\alpha\in(0,1)$, so there is an open set containing zero 
in $H^{k+2}$ on which $F$ is defined.  The existence of $\varepsilon_0$ so that $F(\phi)$ and $\hat{F}(\phi,s)$ are defined for $\phi\in B_{\varepsilon_0}(0)$ and $|s|<s_0$ is then immediate as $H^{k+2}_*\subset H^{k+2}$.  

We need to show that $F$ and $\hat F$ take $B_{\varepsilon_0}(0)$ into $H^{k}(L)$. Note that the conditions satisfied by  $G_s\in \U(n)$ in $T\setminus B_{R_1}$ imply that, outside $B_{R_1},$ $\theta^{\phi,s}=\theta^{\phi}+\theta(s)$ and $\beta^{\phi,s}$ differs from $\beta^{\phi}$ by a constant $c(\phi,s)$. Since $\phi$ tends to zero at infinity in $C^3$, we have that $c(s,\phi)=c(s)$ and  we saw in the proof of  Lemma \ref{linlem} that $c(s)=0$. Thus $\beta^{\phi,s}$ is identical to $\beta^{\phi}$ and so $\hat F(\phi,s)$ and $F(\phi)$ are identical functions outside $B_{R_1}$. Hence, we only need to argue that $F$ takes $B_{\varepsilon_0}(0)$ into $H^{k}(L)$.

Notice that $\theta^{\phi}$ depends only on  the tangent space of $L^{\phi}$, and thus on $\nabla\phi$ and $\nabla^2\phi$. Thus we can consider a smooth function of its arguments  $Q_{\theta}(x,y,z)$ so that
$$\theta^{\phi}(x)=\theta(x)+\Delta\phi(x)-\langle H,\overline \nabla\bar \phi\rangle+Q_{\theta}\big(x,\nabla\phi(x),\nabla^2\phi(x)\big).$$
Using  the expression for the  linearization of $\theta^{\phi}$ given in Lemma \ref{exactlem} and arguing as in  \cite[Proposition 2.10]{JoyceSLsing2}, we conclude that $Q_{\theta}$, $\partial_yQ_{\theta}$ and $\partial_zQ_{\theta}$ vanish at $(x,0,0)$. 

From the expression for $\beta^{\phi}$ given in Lemma \ref{exactlem} we see that we can find a smooth function of its arguments  $Q_{\beta}(x,y)$ so that
$$\beta^{\phi}(x)=\beta(x)+\langle \bfx,\nabla\phi(x)\rangle-2\phi(x)+\langle H,\overline \nabla\bar \phi\rangle+Q_{\beta}\big(x,\nabla\phi(x)\big). $$
Using the expression for the linearization of $\beta^{\phi}$ given in Lemma \ref{exactlem} and again arguing as in  \cite[Proposition 2.10]{JoyceSLsing2}, we conclude that $Q_{\beta}$ and $\partial_yQ_{\beta}$ vanish at $(x,0)$.

Therefore, since $F(0)=\theta+\beta=0$, we see that
$$F(\phi)(x)=\mathcal{L}(\phi)(x)+Q\big(x,\nabla\phi(x),\nabla^2\phi(x)\big),$$ where $Q=Q(x,y,z)$ is a smooth function of its arguments 
such that $Q$, $\partial_yQ$ and $\partial_zQ$ all vanish at $(x,0,0)$.  Observe that $Q(x,y,z)$ does not
 directly depend on $\phi(x)$. By Proposition \ref{Lctsprop} it is now enough to show that $Q$ takes $B_{\varepsilon_0}(0)$ into $H^k(L)$.  

Let $\phi\in B_{\varepsilon_0}(0)$ be a smooth function with compact support.  We derive estimates for $\eta$ given by
 $\eta(x)=Q\big(x,\nabla\phi(x),\nabla^2\phi(x)\big)$.  
Since $Q$ and its first derivatives in $y$ and $z$ vanish when $\phi=0$,  
\begin{equation}\label{etaesteq1}
|\eta(x)|\leq C(x)(|\nabla\phi(x)|^2+|\nabla^2\phi(x)|^2)
\end{equation}
 %whenever the pointwise $C^2$-norm of $\phi$ is sufficiently small.  
 for some non-negative function $C$ on $L$.  Our first objective is to show that $C$ is bounded.
 
 Recall we have a diffeomorphism $\pi:L_0\setminus B_{R_1}\rightarrow L\setminus B$ and set $\phi_0=\pi^*(\phi)$, $F_0(\phi)=\pi^*\big(F(\phi)\big)$. {From Proposition \ref{tubenbdthm} we have 
 $$f^{\phi,0}\big(\pi(x)\big)= \Psi_0\big(x, \d\psi(x)+\d\phi_0(x)\big)$$
 and so on 
 } $L_0\setminus B_{R_1}$ we have $F_0(\phi)=\theta_0^{\phi}+\beta_0^{\phi}$, where  $\theta_0^{\phi}$ and $\beta_0^{\phi}$ are the pull backs of the Lagrangian angle and primitive for $\lambda$ {on} the graph of $\d\phi_0+\d\psi$ over $L_0\setminus B_{R_1}$.  Thus,
 $$F_0(\phi)(x)=\mathcal{L}_0(\phi_0+\psi)(x)+Q_0\big(x,\nabla(\phi_0+\psi)(x),\nabla^2(\phi_0
+\psi)(x)\big),$$
where $\mathcal{L}_0$ is the operator given in \eq{Leq} calculated on $L_0$ and $Q_0$  is a function with the same properties as $Q$.  Since we are working over the planes $L_0$, we compute $$\beta_0^{\phi}=\beta_0+\langle \bfx,\nabla(\phi_0+\psi)\rangle-2(\phi_0+\psi)\quad\text{and}\quad
\theta_0^{\phi}=\theta_0+\sum_{j=1}^n\tan^{-1}(\mu_j)$$ where $\mu_1,\ldots,\mu_n$ are the eigenvalues of $\hess(\phi_0+\psi)$, and $\theta_0$, $\beta_0$ are the Lagrangian angle and primitive for the Liouville form on $L_0$. Thus, because we have chosen $\theta_0+\beta_0=0$, we have
$$Q_0\big(x,\nabla(\phi_0+\psi)(x),\nabla^2(\phi_0+\psi)(x)\big)=\sum_{j=1}^n\tan^{-1}(\mu_j)(x)-\Delta(\phi_0+\psi)(x).$$
  From this explicit formula, we deduce that $Q_0$ and all its derivatives are bounded on $L_0\setminus B_{R_1}$.  Moreover, the 
decay of $|(\partial_x)^aQ_0(x,y,z)|$ is controlled by $|y|^2+|z|^2$. The exponential decay of $\psi$ in Theorem \ref{thm.asympt} implies  $Q$ and $Q_0$ differ by terms with exponential decay and so we have that  $Q$ and all its derivatives are bounded on $L$, and that the $x$ derivatives of $Q$ satisfy
\begin{equation}\label{etaesteq2}
\big|(\partial_x)^aQ\big(x,\nabla\phi(x),\nabla^2\phi(x)\big)\big|\leq C^a(|\nabla\phi(x)|^2+|\nabla^2\phi(x)|^2)
\end{equation}
for some constants $C^a$.  In particular, we can choose $C(x)=C$ independent of $x$
 in \eq{etaesteq1} and we deduce that 
 $$\|\eta\|_{L^2}\leq C\|\phi\|_{C^2}\|\phi\|_{{H}^2}.$$
Since any element of $H^{k+2}_*$ has bounded $C^2$ norm and lies in ${H}^2$, we deduce that $Q$ maps $B_{\varepsilon_0}(0)$ into $L^2$.
  
Now let $j\in\{1,\ldots,k\}$.  Our aim is to show that $\nabla^j\eta$ lies in $L^2$.  By the chain rule,
\begin{equation}\label{nablaetaeq}
|\nabla^j\eta|\leq j!\!\!\!\!\sum_{{\buildrel a,b,c\geq 0\over{\scriptscriptstyle a+b+c\leq j}}}\!\!\!\!|(\partial_x)^a(\partial_y)^b(\partial_z)^cQ|\hspace{-40pt}
\sum_{{\buildrel m_1,\ldots,m_b,n_1,\ldots,n_c\geq 1\over{\scriptscriptstyle a+m_1+\ldots+m_b+n_1+\ldots+n_c=j}}} \!\!
\prod_{i=1}^b|\nabla^{m_i}(\nabla\phi)|\prod_{l=1}^c|\nabla^{n_l}(\nabla^2\phi)|.
\end{equation}
If $j=a$ in the sum in \eq{nablaetaeq}, the products are trivial and we can use \eq{etaesteq2} to show that the corresponding terms
 lie in $L^2$.  Therefore we now assume 
that $j>a$.
Let $q_1,\ldots,q_b,r_1,\ldots,r_c$ be positive constants so that
\begin{equation}\label{qreq}
\sum_{i=1}^b\frac{1}{q_i}+\sum_{l=1}^c\frac{1}{r_l}=1.
\end{equation}
Applying H\"older's inequality to \eq{nablaetaeq}, we see that
\begin{equation}\label{nablaetaeq2}
\begin{split}
\int_L|\nabla^j\eta|^2\d\mathcal{H}^n %&
&\leq \!\!\!\!\sum_{{\buildrel a,b,c\geq 0\over{\scriptscriptstyle a+b+c\leq j}}}\!\!\!\!C(a,b,c)\hspace{-30pt}
\sum_{{\buildrel m_1,\ldots,m_b,n_1,\ldots,n_c\geq 1\over{\scriptscriptstyle a+m_1+\ldots+m_b+n_1+\ldots+n_c=j}}} \!\! 
\prod_{i=1}^b\left(\int_L|\nabla^{m_i}(\nabla\phi)|^{2q_i}\d\mathcal{H}^n\right)^{\frac{1}{q_i}} \times\\
&\qquad 
\prod_{l=1}^c\left(\int_L|\nabla^{n_l}(\nabla^2\phi)|^{2r_l}\d\mathcal{H}^n\right)^{\frac{1}{r_l}}
\end{split}
\end{equation}
 for some constants $C(a,b,c)$ determined by $j$ and the derivatives of $Q$, which are bounded.
  
 Given a section $\sigma$ of a vector bundle  with a connection $D$ over $L$ that lies in $H^s\cap L^{\infty}$ we have by \cite[Theorem 3]{Cantor} that
\begin{equation}\label{keyesteq}
\int_L |D^p\sigma|^\frac{2s}{p}\d\mathcal{H}^n\leq C\|\sigma\|_{\infty}^{2\frac{s}{p}-2}\|\sigma\|_{H^s}^2%\int_L|D^s\sigma|^2\d\mathcal{H}^n
\end{equation}
for some constant $C$ independent of $\sigma$, whenever $s\geq p$.  (Notice that the results in \cite{Cantor} apply since $L$ is complete, has injectivity radius bounded away from zero and 
bounded sectional curvature.)  
%Suppose that $j>a$, since otherwise the products in \eq{nablaetaeq} are trivial and we can use the fact that the $x$ derivatives 
%of $Q$ have exponential decay to show that these terms lie in $H^k$.  
Choosing $q_i=\frac{j-a}{m_i}$ and $r_l=\frac{j-a}{n_l}$, we see that \eq{qreq} holds and we can apply \eq{keyesteq} to deduce that there exists some constant 
$C$, independent of $\phi$, so that
\begin{align}\label{nablaphieq}
\int_L |\nabla^{m_i}(\nabla\phi)|^{2q_i}\d\mathcal{H}^n&\leq C\|\nabla\phi\|_{\infty}^{2q_i-2}\|\phi\|_{H^{j-a+1}}^2 %\int_L|\nabla^{j-a+1}\phi|^2\d\mathcal{H}^n 
\quad\text{and}\\
\label{hessphieq}
\int_L|\nabla^{n_l}(\nabla^2\phi)|^{2r_l}\d\mathcal{H}^n&\leq C\|\nabla^2\phi\|_{\infty}^{2r_l-2}\|\phi\|_{H^{j-a+2}}^2. %\int_L|\nabla^{j-a+2}\phi|^2\d\mathcal{H}^n
%,
\end{align}
%whenever $j>a$ (otherwise the products in \eq{nablaetaeq} are trivial anyway).  
Therefore, 
substituting \eq{nablaphieq} and \eq{hessphieq} into \eq{nablaetaeq2} we see that there {exists} a 
constant $C(j,Q,\|\phi\|_{C^2})$ %depending on the $C^2$-norm of $\phi$, $j$ and the bounds on the derivatives of $Q$ 
so that
$$\int_L|\nabla^j\eta|^2\d\mathcal{H}^n\leq C(j,Q,\|\phi\|_{C^2})\|\phi\|_{H^{j+2}}^2.$$
 Since this holds for all smooth compactly supported $\phi\in B_{\varepsilon}(0)$ we see that 
$\eta\in H^j$ whenever $\phi\in H^{j+2}\cap C^2$ for $j=1,\ldots,k$.  The result for $F$ follows.

\end{proof}

We can now prove the following local uniqueness result.  

\begin{thm}\label{defthm1}    
Let $k>1+\frac{n}{2}$.  There exist $0<{\varepsilon_1}\leq\varepsilon_0$ and $0<s_1\leq s_0$ so that
for each $|s|<s_1$ there exists a unique $\phi(s)\in B_{\varepsilon_1}(0)\subseteq H^{k+2}_*(L)$ so that $\hat{F}\big(\phi(s),s\big)=0$, where $s\mapsto\phi(s)$ is continuous.

Moreover, if $(\phi,s)\in B_{{\varepsilon_1}}(0)\times(-s_1,s_1)$ then $L^{\phi,s}$ is a self-expander with $H=\bfx^\bot$ 
if and only if $\phi=\phi(s)$.
%In particular, if $\phi\in B_{{\varepsilon_1}}(0)$ then $L^{\phi}$ is a self-expander if and only if $\phi=0$, so $L^{\phi}=L$.
\end{thm}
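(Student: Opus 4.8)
The plan is to prove this by applying the Implicit Function Theorem for Banach spaces to the map $\hat F$ of Definition \ref{defmapdfn} at the point $(0,0)$, with the crucial input being that its partial derivative in the first variable is the isomorphism supplied by Corollary \ref{Lisocor}.

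First I would promote Proposition \ref{Fdfnprop} from well-definedness to the statement that, for $k>1+\tfrac n2$, the map
$$\hat F\colon B_{\varepsilon_0}(0)\times(-s_0,s_0)\subset H^{k+2}_*(L)\times\R\longrightarrow H^k(L)$$
is $C^1$. Recall from Proposition \ref{Fdfnprop} that $F(\phi)=\mathcal{L}(\phi)+Q\big(x,\nabla\phi,\nabla^2\phi\big)$, where $Q$, $\partial_yQ$, $\partial_zQ$ all vanish at $(x,0,0)$ and all derivatives of $Q$ are bounded, while $\hat F(\phi,s)$ agrees with $F(\phi)$ outside $B_{R_1}$ and differs from it only by a compactly supported term depending smoothly on $s$ through $G_s$. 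Continuity of $\mathcal{L}$ is Proposition \ref{Lctsprop}; for the nonlinear part, one shows that $\phi\mapsto Q(\cdot,\nabla\phi,\nabla^2\phi)$ is Fréchet differentiable with derivative $\dot\phi\mapsto \partial_yQ\cdot\nabla\dot\phi+\partial_zQ\cdot\nabla^2\dot\phi$, by estimating the relevant difference quotient exactly as the nonlinearity itself was estimated in the proof of Proposition \ref{Fdfnprop}, i.e.\ via the interpolation inequality \eqref{keyesteq} together with a Taylor expansion of $Q$ in the $(y,z)$ variables. This verification --- Fréchet differentiability of the Nemytskii-type operator between the weighted Sobolev spaces $H^{k+2}_*$ and $H^k$ --- is the only point requiring genuine (though routine) work; everything else is assembly.

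With this in hand, Lemma \ref{linlem} gives $\d_1\hat F|_{(0,0)}=\mathcal{L}\colon H^{k+2}_*(L)\to H^k(L)$, which is a Banach space isomorphism by Corollary \ref{Lisocor}, and $\hat F(0,0)=F(0)=\theta+\beta=0$. The Implicit Function Theorem then yields $0<\varepsilon_1\leq\varepsilon_0$, $0<s_1\leq s_0$, and a $C^1$ (in particular continuous) map $s\mapsto\phi(s)\in B_{\varepsilon_1}(0)\subseteq H^{k+2}_*(L)$ with $\phi(0)=0$ such that, for $(\phi,s)\in B_{\varepsilon_1}(0)\times(-s_1,s_1)$, one has $\hat F(\phi,s)=0$ if and only if $\phi=\phi(s)$; after shrinking $s_1$ we may assume $\phi(s)\in B_{\varepsilon_1}(0)$ for all $|s|<s_1$. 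This gives existence, uniqueness and continuity of $\phi(s)$.

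Finally, for the equivalence: by Lemma \ref{selfexplem1}(ii) together with Definition \ref{defmapdfn}, $L^{\phi,s}$ is a self-expander with $H=\bfx^\bot$ if and only if $\beta^{\phi,s}+\theta^{\phi,s}$ is constant, i.e.\ if and only if $\hat F(\phi,s)=\beta^{\phi,s}+\theta^{\phi,s}-\theta(s)$ is constant. For $\phi\in B_{\varepsilon_1}(0)$ we have $\hat F(\phi,s)\in H^k(L)$, and since $L$ has bounded geometry and $k>\tfrac n2$, Sobolev embedding forces $\hat F(\phi,s)$ to vanish at infinity; hence $\hat F(\phi,s)$ is constant if and only if it is identically $0$. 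Combining this with the Implicit Function Theorem uniqueness, $L^{\phi,s}$ is a self-expander with $H=\bfx^\bot$ if and only if $\hat F(\phi,s)=0$, if and only if $\phi=\phi(s)$; conversely $\hat F(\phi(s),s)=0$ makes $\beta^{\phi(s),s}+\theta^{\phi(s),s}=\theta(s)$ constant, so $L^{\phi(s),s}$ is indeed such a self-expander. This would complete the proof.
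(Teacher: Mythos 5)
Your proposal is correct and follows essentially the same route as the paper: both reduce the statement to the Implicit Function Theorem at $(0,0)$, using Lemma \ref{linlem} and Corollary \ref{Lisocor} to see that $\d_1\hat F|_{(0,0)}=\mathcal{L}$ is an isomorphism, and both dispose of the ``self-expander iff $\hat F=0$'' equivalence by noting that $\hat F(\phi,s)\in H^k(L)$ with $k>n/2$ must decay at infinity, so constancy forces vanishing. Your explicit attention to the Fr\'echet differentiability of the Nemytskii part of $\hat F$ is a point the paper leaves implicit, and your sketch of how to verify it (via the same interpolation estimates as in Proposition \ref{Fdfnprop}) is the right one.
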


\begin{proof}
By Proposition \ref{Fdfnprop}, $\hat{F}:B_{{\varepsilon}_0}(0)\times(-s_0,s_0)\rightarrow H^k(L)$ is well-defined and $L^{\phi,s}$ is a self-expander if and only if $\hat{F}(\phi,s)$ is constant.  However, 
if $\phi\in B_{\varepsilon_0}(0)$ then $\hat{F}(\phi,s)\in C^1(L)\cap L^2(L)$ by the Sobolev Embedding Theorem and so $|\hat{F}(\phi,s)(x)|\rightarrow 0$ as $|x|\rightarrow \infty$.  Hence 
$L^{\phi,s}$ is a self-expander if and only if $\hat{F}(\phi,s)=0$.   
 %Since $L^{\prime}$ is a self-expander, $F(\phi)=0$.
 
By Lemma \ref{linlem}, $\d\hat{F}|_{(0,0)}(\phi,s)=\mathcal{L}(\phi)+s\gamma$ and $\gamma$ has compact support.  Corollary \ref{Lisocor} implies that $\d_1\hat{F}|_{(0,0)}=\mathcal{L}:H^{k+2}_*(L)\rightarrow H^k(L)$ is an isomorphism.  Thus $\d\hat{F}|_{(0,0)}:H^{k+2}_*(L)\times\R\rightarrow H^k(L)$ is 
  surjective.  Moreover, there exists unique $\Gamma\in H^{k+2}_*(L)$ such that $\mathcal{L}(\Gamma)=-\gamma$, so $\d\hat{F}|_{(0,0)}$ has a 1-dimensional kernel.  

Applying the Implicit Function Theorem for Banach spaces \cite[Chapter XIV, Theorem 2.1]{Lang}, we see that  there 
exist ${\varepsilon}_1\leq{\varepsilon}_0$, $s_1\leq s_0$ and a unique continuous map $s\mapsto\phi(s)$ so that 
$$\hat{F}^{-1}(0)\cap \big(B_{{\varepsilon_1}}(0)\times(-s_1,s_1)\big)=\big\{\big(\phi(s),s\big)\,:\,|s|<s_1\big\}.$$ 
 The result follows.   
\end{proof}

We now finish the proof of Theorem \ref{local.uniqueness}.

From Theorem \ref{defthm1} we obtain, for all $|s|<s_1$, the existence of a zero-Maslov class self-expander $L^s$ asymptotic to $L^s_0=P_1(s)+P_2(s)$.  The family $(L_s)_{|s|<s_1}$ is continuous in $C^{2,\alpha}$.

To show uniqueness, apply Theorem \ref{thm.asympt} with $k=3$ and $$K=\big\{\big(P_1(s),P_2(s)\big)\big\}_{|s|\leq s_1}\subset G_n$$ to obtain the existence of $\varepsilon$ and $R_0$ so that if $N^s$ is a self-expander asymptotic to $L^s_0$ which is  $\varepsilon$-close in $C^2$ to $L$ in $B_{R_0}$, then $N^s=L^{\phi,s}$ for some $\phi\in B_{\varepsilon_1}$. Theorem   \ref{defthm1} implies $N^s$ is unique and equal to $L^s$.
{\hfill$\square$}

\section{Compactness Theorem in $\C^2$}\label{compactness.section}%for self-expanders}

We now restrict to the situation where the self-expander is asymptotic to transverse planes in $\C^2$.  The reason is that it is 
only in $\C^2$ where a Lagrangian cone with density strictly less than $2$ must be a plane.  For $n>2$, the Harvey--Lawson
 $\U(1)^{n-1}$-invariant special Lagrangian cone in $\C^n$ has density strictly between $1$ and $2$.

Consider $M=G_L(2,\C^2)\times G_L(2,\C^2)$, where  $G_L(2,\C^2)$ denotes the set of all multiplicity one 
Lagrangian planes in $\C^2$.  Define
$$SL=\{(P_1,P_2)\in M\,|\, P_1+P_2\mbox{ or }P_1-P_2\mbox{ is area-minimizing}\}$$
 and 
\begin{equation}\label{lambdaeq} 
\Lambda=\{(P_1,P_2)\in M\,|\, P_1\cap P_2=\{0\}\}\setminus  SL.
\end{equation}
 Since $(P_1,P_2)\in M$ lies in $SL$ if and only if the sum of the angles between $P_1$ and $P_2$ is an integer multiple of $\pi$, we see that 
 $\Lambda$ is an open subset of $M$.

The aim of this section is to prove the following compactness result.

\begin{thm}\label{compactness}
Pick a compact set ${K}\subset   \Lambda$. The set 
\begin{align*}
\mathcal S(K)=\{L\subseteq\C^2\,|\,& L\mbox{ is a %connected 
zero-Maslov class Lagrangian self-expander }\\
&%\qquad\quad 
\mbox{%with $H=\bfx^{\perp}$ 
which is asymptotic to $P_1+P_2$ where }(P_1,P_2)\in {K}\}
\end{align*}
is compact in the $C^{2,\alpha}$ topology.
\end{thm}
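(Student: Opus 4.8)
The plan is to argue by sequential compactness. Given $L^i\in\mathcal S(K)$ asymptotic to $L_0^i=P_1^i+P_2^i$ with $(P_1^i,P_2^i)\in K$, compactness of $K$ lets me pass to a subsequence with $(P_1^i,P_2^i)\to(P_1,P_2)\in K$, and I must produce a further subsequence converging in $C^{2,\alpha}$ to some $L\in\mathcal S(K)$. The behaviour at infinity comes essentially for free from Theorem \ref{thm.asympt}: applied to $K$ with $k$ large it gives a radius $R_1$ and a uniform exponential bound on the graph functions $\psi^i$ describing $L^i$ over $L_0^i\setminus B_{R_1}$. After identifying $L_0^i$ with $L_0$ via the converging rotations, Arzel\`a--Ascoli gives subsequential $C^\infty$-convergence $\psi^i\to\psi^\infty$, so $L^i\to L$ smoothly, with exponential decay, on $\C^2\setminus B_{2R_1}$, where $L$ is the graph of $\psi^\infty$ over $L_0\setminus B_{R_1}$. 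Inside $B_{2R_1}$ I have the uniform mass bound $\H^2(L^i\cap B_r(x))\le c_1 r^2$ of Lemma \ref{embedlem} (whose proof in fact yields a constant depending only on dimension), together with $|H_{L^i}|=|\bfx^\bot|\le|\bfx|$, so by Allard's compactness theorem a further subsequence converges as integral varifolds to a limit $V$ with generalized mean curvature equal to $\bfx^\bot$, agreeing with the exterior graph outside $B_{2R_1}$.

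To upgrade varifold convergence to smooth convergence I would use the mean curvature flow picture. The rescalings $L^i_t=\sqrt{2t}L^i$ are smooth flows with $L^i_t\to L_0^i$ as $t\to0$, so after a further subsequence they converge to a Brakke flow $(\mu_t)_{t\ge0}$ with $\mu_0=P_1+P_2$ and $\mu_{1/2}=V$, and Huisken's monotonicity passes to the limit, giving $\Theta_\mu\big((p,\tfrac12)\big)\le\Theta_0(p,\tfrac12)$ for every $p\in\C^2$. Now a tangent flow to $\mu$ at a spacetime point $(p,\tfrac12)$ is a self-shrinker, and since the flow is zero-Maslov class Lagrangian it is a stationary special Lagrangian cone (\cite[Corollary 3.5]{neves2}), of density equal to $\Theta_\mu\big((p,\tfrac12)\big)$. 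In $\C^2$ such a cone is a finite union of Lagrangian planes, so its density is either $1$ or at least $2$: there is a density gap on $(1,2)$. For $p\ne0$ we have $\Theta_\mu\big((p,\tfrac12)\big)\le\Theta_0(p,\tfrac12)<2$ by \eqref{lessthantwo}, hence the tangent flow is a multiplicity-one plane, $(p,\tfrac12)$ is a regular point, and White's regularity theorem \cite{white} yields smooth multiplicity-one convergence $L^i\to V$ near $p$; the same comparison forbids higher-multiplicity limits at $p\ne0$.

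The genuinely delicate case, and the main obstacle, is $p=0$, where $\Theta_0(0,\tfrac12)=2$ and the comparison only gives $\Theta_\mu\big((0,\tfrac12)\big)\le2$. Suppose equality holds. Because $\Theta_0(0,l)=2$ for every $l>0$, the rigidity case of Huisken's monotonicity forces $\mu_t$ to shrink self-similarly about $0$; as $\mu_0=P_1+P_2$ is a cone this gives $\mu_t\equiv P_1+P_2$, so $V=P_1+P_2$. By the previous paragraph this means $L^i\to P_1\cup P_2$ in $C^\infty_{\loc}(\C^2\setminus\{0\})$, so outside any fixed ball about the origin $L^i$ consists of two nearly planar sheets; since $L^i$ is connected while $(P_1\cup P_2)\setminus\{0\}$ is not, these sheets are joined by a neck contained in $B_{\delta_i}(0)$ with $\delta_i\to0$. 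A rescaling argument at the neck then yields a complete non-flat minimal Lagrangian surface $\Sigma$ in $\C^2$ (the rescaled mean curvature tends to zero since $|H_{L^i}|\le|\bfx|\le\delta_i$ there), which is zero-Maslov class and hence has $\nabla\theta_\Sigma$ proportional to $H_\Sigma=0$, so $\theta_\Sigma$ is constant. On the other hand the Lagrangian angle of $L^i$ must pass from $\theta(P_1)$ to $\theta(P_2)$, a transition concentrated on the neck because away from it the sheets are nearly planar, and $\theta(P_1)-\theta(P_2)\notin\pi\Z$ is nonzero precisely because $(P_1,P_2)\notin SL$. Thus $\Sigma$ should carry a nontrivial angle variation, contradicting constancy of $\theta_\Sigma$. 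The technical heart here is making the rescaling precise — choosing the blow-up scale so that $\Sigma$ genuinely witnesses the full angle jump (or, alternatively, running a density-gap argument: every non-trivial special Lagrangian cone in $\C^2$ has density at least $2$, whereas a blow-up limit sitting at an infinitesimal scale of the embedded surfaces $L^i$ inherits a density ratio at infinity strictly below $2$). Either way $\Theta_\mu\big((0,\tfrac12)\big)<2$, so $(0,\tfrac12)$ is a regular point.

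Consequently $\mu$ has no singular points at time $\tfrac12$, so $L^i\to V$ in $C^{2,\alpha}_{\loc}$ with multiplicity one. Together with the uniform exterior control, $V$ is a smooth embedded Lagrangian with single-valued Lagrangian angle (the limit of the $\theta_{L^i}$) satisfying $H_V=\bfx^\bot$, it is asymptotic to $P_1+P_2$ by Definition \ref{asympdfn}, and $L^i\to V$ in $C^{2,\alpha}$. Hence $V\in\mathcal S(K)$; since every sequence in $\mathcal S(K)$ thus has a $C^{2,\alpha}$-convergent subsequence with limit in $\mathcal S(K)$, the set $\mathcal S(K)$ is compact.
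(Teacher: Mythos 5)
Your overall architecture (Brakke/varifold limit, exterior control from Theorem \ref{thm.asympt}, the density gap for special Lagrangian cones in $\C^2$, White's regularity) matches the paper's, and your analysis away from the origin is essentially right, modulo one omission: to identify tangent flows as special Lagrangian cones you need a \emph{uniform} bound on the Lagrangian angles $\theta^i$, which does not come for free; the paper establishes it (Lemma \ref{props.poincare}(iii)) from a uniform isoperimetric inequality on $L^i\cap B_R$, connectedness, and the resulting intrinsic diameter bound.

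The genuine gap is exactly where you flag it: ruling out $\Theta_\mu\bigl((0,\tfrac12)\bigr)=2$, i.e.\ showing the limit is not the stationary cone $P_1+P_2$ (the paper's Proposition \ref{stationary}). Your neck blow-up does not close as stated. First, the parenthetical density alternative fails outright: the density of the flow at $(0,\tfrac12)$ is exactly $2$ in the scenario you are excluding, and a blow-up limit at the neck can perfectly well be a union of two planes or a Lawlor-type special Lagrangian of density $2$ at infinity, so no contradiction arises from densities alone. Second, the angle-jump version requires that a \emph{single} blow-up scale witnesses the full transition from $\theta(P_1)$ to $\theta(P_2)$; but since $\int_{L^i\cap B_R}|\nabla\theta^i|^2=\int_{L^i\cap B_R}|H|^2\to 0$, the total variation of $\theta^i$ along any fixed sequence of scales tends to zero, and the jump can spread over infinitely many scales so that every locally smooth blow-up limit has constant angle without contradiction. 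What actually closes the argument is a quantitative Poincar\'e-type inequality: the paper uses the uniform isoperimetric inequality and connectedness of $L^i\cap B_{2R}$ (Lemma \ref{props.poincare}) together with $\int|H|^2+|\bfx^\bot|^2\to 0$ and \cite[Proposition A.1]{neves} to show $\beta^i=-\theta^i$ converges in $L^2$ to a constant, whence by \cite[Proposition 5.1]{neves} the limit $P_1+P_2$ would have constant Lagrangian angle and be area-minimizing, contradicting $(P_1,P_2)\notin SL$. You need this (or an equivalent global-in-$B_R$ estimate), not a pointwise blow-up, to convert ``$\nabla\theta^i\to 0$ in $L^2$'' into ``no angle jump.'' Once that is in place, the paper also upgrades the conclusion to a uniform two-sided density bound $\Theta_t(y,l)\le 1+\varepsilon_0$ at all small scales (Lemma \ref{small}) before invoking White, rather than treating each limit point separately.
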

\begin{proof}
Let $(L^i)_{i\in\N}$ be a sequence of self-expanders in $\mathcal S(K)$ asymptotic to $L^i_0=P^i_1+P^i_2$. %the pair of planes $(P^i_1, P^i_2)$. 
%We set $L^i_t=\sqrt{2t}L^i$ with $L^i_0=P^i_1+P^i_2$. Thus 
Setting $L^i_t=\sqrt{2t}L^i$, we thus have a sequence
$(L^i_t)_{t\geq 0}$ %is a sequence 
of solutions to Lagrangian mean curvature flow which are smooth for all $t>0$. From Lemma \ref{embedlem} we have uniform area
 bounds for
 $(L^i_t)_{t\geq 0}$ and so \cite[Theorem 7.1]{ilmanen1} implies that we can consider a subsequence which converges weakly to an integral Brakke motion $(L_t)_{t\geq 0}$. 

 It also follows from \cite[Theorem 7.1]{ilmanen1} %we know 
that, for almost all $t>0$, $L^i_t$ admits a subsequence which converges to $L_t$ as an integral varifold and so $2tH=\x^{\bot}$ on 
$L_t$. Furthermore, Radon measure convergence implies that $L_t=\sqrt{2t}L_{1/2}$ for all $t>0$. In particular, $L_{1/2}$ is an 
integral varifold with $H=\x^{\bot}$ and we denote it simply by $L$. 

 Compactness of ${K}$ implies that, after passing to another subsequence, $(P^i_1,P^i_2)$ converges to $(P_1,P_2)\in {K}$.  Our objective 
is to show that $L\in\mathcal{S}(K)$.

We first show that $L$  is asymptotic to $L_0=P_1+P_2$.
 \begin{lemm}\label{lemm.se.asympt} There is $R_0>0$ and $\psi\in C^{\infty}(L_0\setminus B_{R_0})$ so that
$$L\setminus B_{2R_0}\subset\{x+J\overline \nabla\psi(x)\,|\, x\in L_0%(P_1\cup P_2)
\setminus B_{R_0}\}  \subset L\setminus B_{R_0/2}$$
and, for some $b>0$, 
%$$\|Y\|_{C^{2,\alpha}(L_0\setminus B_{R})}=o(1)\quad\text{and}\quad \|Y\|_{C^{1,\alpha}(L_0\setminus B_R)}=O(e^{-bR^2})
$$\|\psi\|_{C^{3,\alpha}(L_0\setminus B_R)}=O(e^{-bR^2})
\mbox{ as }R\rightarrow\infty.$$ 
Moreover, $L^i\setminus B_{R_0}$ converges to $L\setminus B_{R_0}$ in $C^{2,\alpha}$ as $i\rightarrow\infty$.
\end{lemm}
 \begin{proof}  The lemma follows from Theorem \ref{thm.asympt}.
  \end{proof}

We can now deduce that $L$ is asymptotic to $L_0$.

\begin{lemm}\label{time.zero} As Radon measures, $L_t\rightarrow L_0$ as $t\rightarrow 0$. 
%$t$ tends to zero $L_t$ converges, as Radon measures, to $L_0$. 
\end{lemm}
\begin{proof}
Given $\varepsilon>0$ small we obtain from Lemma \ref{lemm.se.asympt} that, for all $t$ sufficiently small,
$$L_{t}\setminus B_{2\varepsilon}\subset\{x+JY_t(x)\,|\, x\in L_0\setminus B_{\varepsilon}\}  \subset L_t\setminus B_{\varepsilon/2},$$
where the $C^{2,\alpha}$ norm of the vector field $Y_t$ tends to zero as $t\rightarrow 0$. Thus, %Lemma \ref{unif.bound} implies that
$$\lim_{t\to 0^+}\int_{L_t}\phi\, \d\H^2 =\int_{L_0}\phi\, \d\H^2$$
for all $\phi\in C^{\infty}_0(\C^2)$.%every continuous function $\phi$ with compact support.
 \end{proof}

The next proposition is one of the key steps to ensure that $L$ is smooth.
 
\begin{prop}\label{stationary}
$L$ is not a stationary varifold. 
\end{prop}
\begin{proof}
Assume $L$ is stationary.  Then $L$ needs to be a cone because $\x^{\bot}=H=0$.
Thus $L_t=\sqrt{2t}L$ has $H=0$ for all $t>0$ and we obtain from Lemma \ref{time.zero} that $L=P_1+P_2$. The goal for the rest  of this proof is to show that $L$ must be area-minimizing and this gives us a contradiction because $(P_1,P_2)\notin SL$.

Since $L^i$ is a self-expander we have (from varifold convergence) that for every $r>0$
$$\lim_{i\to\infty}\int_{L^i\cap B_r}|\x^{\bot}|^2\d\H^2=\int_{L\cap B_r}|\x^{\bot}|^2\d\H^2=0$$
and thus, for all $r>0$,  
\begin{equation}\label{zero}
\lim_{i\to\infty}\int_{L^i\cap B_r} \big(|H|^2+|\x^{\bot}|^2\big)\d\H^2=\lim_{i\to\infty}\int_{L^i\cap B_r} \big(2|\x^{\bot}|^2\big)\d\H^2=0.
\end{equation}
\begin{lemm}\label{props.poincare}The following properties hold:
\begin{itemize}
\item[(i)]There is $d_0>0$ %universal        
so that for every $R>0$, every $i$ sufficiently large, and every open subset $A$ of $L^i\cap B_{4R}$ with rectifiable boundary we have
\begin{equation*}\label{isoperimetric}
\left(\H^2(A)\right)^{1/2}\leq d_0\H^{1}(\partial A).
\end{equation*}
\item[(ii)] There is $R_1>0$ so that for all $R>R_1$ and all $i$ sufficiently large
 $$L^i\cap B_{2R} \mbox{ is connected  and }\partial(L^i\cap B_{3R})\subset \partial B_{3R}.$$ 
\item[(iii)]There is $c>0$ so that  for all  $i$ sufficiently large we have
$$\sup_{L^i}|\theta^i|= \sup_{L^i}|\beta^i|\leq c.$$
\end{itemize}
\end{lemm} 
\begin{proof}
We first prove (i). From {the}  Michael--Simon Sobolev inequality
(see \cite[Theorem 18.6]{Leon})
$$
\left(\H^2(A)\right)^{1/2}\leq c_0\int_A |H| +c_0\H^{1}(\partial A)
$$
for some universal 
constant $c_0>0$. In this case we have
$$
\left(\H^2(A)\right)^{1/2}\leq c_0\left(\H^2(A)\right)^{1/2} \left(\int_A |H|^2\right)^{1/2}+c_0\H^{1}(\partial A)
$$
and so we get the desired claim because for all $i$ sufficiently large we have (due to \eqref{zero}) 
$$c_0^2 \int_{L^i\cap B_{4R}} |H|^2\leq \frac{1}{4}.$$

Property (ii) follows from  Lemma \ref{lemm.se.asympt}.

Finally, we prove property (iii). Given $y_i\in L^i$, denote by $\hat{B}_{r}(y_i)$ the intrinsic ball  in $L^i$ of radius $r$ {and set $\psi_i(r)=\H^2(\hat{B}_{r}(y_i)).$
From (i) we see that for almost all $r$
$$\big(\psi_i(r)\big)^{1/2}\leq d_0\H^1\big(\partial \hat{B}_{r}(y_i)\big) =d_0\psi'_i(r).$$
Integrating the above inequality implies} the existence of  $d_1>0$, depending only on $d_0$, so that  for all $R>0$
\begin{equation}\label{lower.bound}
\H^{2}\big(\hat{B}_{r}(y_i)\big)\geq d_1r^2\mbox{ for all }y_i\in B_{3R}\cap L^i \mbox{ and } r<R.
\end{equation}
Choose $\beta^i$, the primitive for the Liouville form $\lambda|_{L_i}$, so that $\beta^i+\theta^i=0$ ($L^i$ is a self-expander).
Combining the uniform area bounds given in Lemma \ref{embedlem} with \eqref{lower.bound}, we have that the intrinsic diameter of $L^i\cap B_{R}$ is uniformly bounded for all  $i$ sufficiently large. Hence, if $x,y\in L^i\cap B_{R}$ and $\gamma$ is a path in $L^i\cap B_{R}$ connecting $x$ to $y$, we have
$$\beta^i(x)-\beta^i(y)=\int_{\gamma}\lambda\leq R\,\mbox{length}(\gamma).$$
Thus the oscillation  of $\beta^i$ in $L^i\cap B_{R}$ is uniformly bounded. The angle $\theta^i=-\beta^i$ can always be chosen so that its range in $L^i\cap B_{R}$ intersects the interval $[0,2\pi]$ and so we obtain that $\theta^i=-\beta^i$ is uniformly bounded in $B_R$. 

From Lemma \ref{lemm.se.asympt} we know that $\theta^i=-\beta^i$ are uniformly bounded outside a large ball and thus %we obtain that $\theta^i$ and $\beta^i$ 
are uniformly bounded on $L^i$.
\end{proof}

We can now finish the proof of Proposition \ref{stationary}. Recall that $L=P_1+P_2$ in the varifold sense and, if necessary, we can change the orientation of one of the planes so that the  identity also holds in the current sense. We want to show that $L=P_1+P_2$ is area-minimizing.

We know that for all $R>0$,
$$\lim_{i\to\infty}\int_{L^i\cap B_R}|H|^2+|\x^{\bot}|^2\d\H^2=0$$
and $|\x^{\bot}|=|\nabla \beta^i|$ is uniformly bounded in $B_R$. From Lemma \ref{props.poincare} we have that all conditions necessary to apply   \cite[Proposition A.1]{neves} are met and so we conclude the  existence of a constant $\bar \beta$  and $R_2$ such that, for all $\phi\in {C}^{\infty}_0(\C^2)$, 
        \begin{equation*}\label{poincare}
        \lim_{i\to\infty}\int_{L^i\cap B_{R_2}}(\beta^i-\bar\beta)^2\phi \,\d\H^2=0.
        \end{equation*}
        Hence
        $$\lim_{i\to\infty}\int_{L^i\cap B_{R_2}}(\theta^i+\bar\beta)^2\phi\, \d\H^2=0$$
for all $\phi\in {C}^{\infty}_0(\C^2)$.  We deduce, from \cite[Proposition 5.1]{neves}, that $L$ has constant Lagrangian angle  {$-\bar \beta$} and   is thus  area-minimizing, providing our required contradiction.
\end{proof}

Using the fact that $L$ is not stationary, we now show that $L$ satisfies the conditions of White's Regularity Theorem. 
It is in this lemma that we use the fact that $n=2$ in a crucial way. Recall the definition of Gaussian density in \eqref{gaussian}.

\begin{lemm}\label{small}
Given $\varepsilon_0>0$ small, there is $\delta>0$ so that
        $$\Theta_t(y,l)\leq 1+\frac{\varepsilon_0}{2}\quad\mbox{ for every } l\leq \delta t,\,\,y\in \C^2\mbox{ and }t>0.$$
\end{lemm}
\begin{rmk}
We briefly sketch the idea. The first step is to find $\delta$ so that $\Theta_{1/2}(y,l)<2$ for all $y\in\C^n$ and $l\leq \delta$. This follows because the monotonicity formula implies that  $\Theta_{1/2}(y,l)\leq\Theta_{0}(y,l+1/2)\leq 2$ with equality only if $L$ is a self-shrinker centered at the origin. In the latter case, because $L$ is a self-expander, we obtain that $L$ must be stationary, which contradicts Proposition \ref{stationary}. Thus, the strict inequality holds as claimed.

The second step is to show that if $\Theta_{1/2}(y_i,\delta_i)\geq 1+\frac{\varepsilon_0}{2}$ for some sequence $\delta_i$ tending to zero, then we can blow-up $L$ and obtain a stationary Lagrangian varifold $\tilde L$ which is not  a plane. Then we blow-down $\tilde L$ to obtain a stationary Lagrangian cone $C$ which must have Gaussian density  at the origin bigger than $1+\frac{\varepsilon_0}{2}$. Since we are in $\C^2$, this forces the Gaussian density at the origin to be at least two, which we then show contradicts the first step. 
\end{rmk}
\begin{proof}
        It suffices to prove the lemma for $t=1/2$ because $L_t=\sqrt{2t}L.$  
        
        In what follows we will constantly use the fact that, because $P_1$ intersects $P_2$ transversely,
        $$\int_{P_1+P_2}\Phi(y,l) \d\H^2< 2\quad\mbox{for all }l>0\mbox{ and }y\neq 0,$$ 
        with equality holding  if $y=0$.        
        \medskip
        
       \noindent{\bf First step:} We start by arguing the existence of $c_1>0$  such
 that, for every $l\leq 2$ and $y\in \C^2$,
        \begin{equation}\label{density}
                \int_{L}\Phi(y,l) \,\d\H^2\leq 2-c_1^{-1}.
        \end{equation}
From the monotonicity formula for Brakke flows \cite[Lemma 7]{ilmanen},
\begin{align}\label{huisken}
        \int_{L}&\Phi(y,l) \d\H^2\\
&+\int_{0}^{1/2}\int_{L_t}\left|H+\frac{(x-y)^{\bot}}{2(l+1/2-t)}\right|^2\Phi(y,l+1/2-t) \d\H^2 \d t\nonumber\\
        &= \int_{P_1+P_2}\Phi(y,l+1/2) \d\H^2\leq 2.\nonumber
\end{align}
 Suppose there is a sequence $y_i$ and $l_i$ with $0\leq l_i\leq 2$ such that

$$\int_{L}\Phi(y_i,l_i) \d\H^2\geq 2-\frac 1 i.$$
Then, by \eqref{huisken},
$$ \int_{P_1+P_2}\Phi(y_i,l_i+1/2) \d\H^2\geq 2-\frac 1 i$$
and so  $y_i$ must converge to zero. 

Assuming without loss of generality that $l_i$ converges to $\bar l$, we have again from \eqref{huisken} that
 \begin{align*}
\int_{0}^{1/2}&\int_{L_t}\left|H+\frac{x^{\bot}}{2(\bar l+1/2-t)}\right|^2\Phi(0,\bar l+1/2-t) \d\H^2 \d t\\
%=\lim_{i\to\infty}\int_{0}^{1/2}\int_{\bar Q_t}\left|H+\frac{(x-y_i)^{\bot}}{2(l_i+1-t)}\right|^2\Phi(y_i,l_i+1-t)d\H^2 dt\\
&= \lim_{i\to\infty}\int_{0}^{1/2}\int_{L_t}\left|H+\frac{(x-y_i)^{\bot}}{2(l_i+1/2-t)}\right|^2\Phi(y_i,l_i+1/2-t) \d\H^2 \d t\\
&\leq 2-\lim_{i\to\infty}\int_{L}\Phi(y_i,l_i) \d\H^2 =0
\end{align*}
and thus
$$ H+\frac{\bfx^{\bot}}{2(\bar l+1/2-t)}=0\mbox{ on }L_t\mbox{ for almost all } t\in[0,1/2].$$
Combining this with the fact that 
$H=\frac{\bfx^{\bot}}{2t}$ on $L_t$
we obtain that $L$ must be stationary, which contradicts Lemma \ref{stationary}. Thus \eqref{density} must hold.

         \medskip
        
        \noindent{\bf Second step:} To finish the proof we argue again by contradiction and assume the existence of sequences  $(y_j)_{j\in\N}$ in $\C^2$ and $(\delta_j)_{j\in\N}$ converging to zero so that
        \begin{equation}\label{bound.below.L}
        \Theta_{1/2}(y_j,\delta_j)\geq 1+\frac{\varepsilon_0}{2}.
        \end{equation}
        From the monotonicity formula for Brakke flows \cite[Lemma 7]{ilmanen} we have
        \begin{equation*}
        \int_{P_1+P_2}\Phi(y_j,\delta_j+1/2) \d\H^2\geq \Theta_{1/2}(y_j,\delta_j)\geq 1+\frac{\varepsilon_0}{2}.
        \end{equation*}
        Note that  the sequence  $(|y_j|)_{j\in \N}$ is bounded by a positive constant $M_0$, because otherwise we could find a subsequence so that
        $$\lim_{j\to\infty} \int_{P_1+P_2}\Phi(y_j,\delta_j+1/2) \leq 1.$$ 

        Consider the sequence of blow-ups
         \begin{equation*}
        \tilde L^{j,i}_s=\delta_j^{-1/2}\left(L^{i}_{1/2+s\delta_j}-y_j\right)\, \mbox{ with } s\geq 0.
        \end{equation*}
        A standard diagonalization argument allows us to consider a subsequence $\tilde L^{j}_s=\tilde L^{j,i(j)}_s$ 
        such that, for all $1\leq  l \leq j$,
        \begin{equation}\label{sequence.bound}
        -\frac{1}{j}\leq\int_{\tilde L^j_0}\Phi(0,l) \d\H^2- \int_{L} \Phi(y_j, l\delta_j)  \d\H^2\leq \frac{1}{j}.
        \end{equation}
        Thus, for every $r>0$,
        \begin{align*}
                \int_0^1\int_{\tilde L^j_s\cap B_r}|H|^2\d\H^2 \d s&= \delta_j^{-1}\int_{1/2}^{1/2+\delta_j}\int_{L^{{i(j)}}_t\cap B_{\sqrt \delta_j r}(y_j)}|H|^2\d\H^2 \d t\\
                &= \delta_j^{-1}\int_{1/2}^{1/2+\delta_j}\int_{L^{{i(j)}}_t\cap B_{\sqrt \delta_j r}(y_j)}\left|\frac{\bfx^{\bot}}{2t}\right|^2\d\H^2 \d t \leq c_2 \delta_j
        \end{align*}
        where $c_2$ depends on $r$ and $M_0$.
        Therefore
        $$\lim_{j\to\infty}\int_0^1\int_{\tilde L^j_s\cap B_r}|H|^2\d\H^2 \d s=0$$
        and so $(\tilde L^j_s)_{0\leq s\leq 1}$ converges to an integral Brakke flow $(\tilde L_s)_{0\leq s\leq 1}$ with $\tilde L_s=\tilde L$ for all $s$ and $\tilde L$ a stationary varifold. From \cite[Proposition 5.1]{neves} we conclude that $\tilde L$ is a union of special Lagrangian currents (the Lagrangian angle is uniformly bounded by Lemma \ref{props.poincare} (iii)).

 From \eqref{bound.below.L} and \eqref{sequence.bound} we have
        $$\int_{\tilde L} \Phi(0,1) \d\H^2\geq 1+\frac{\varepsilon_0}{2}$$ and so $\tilde L$ cannot be a  plane with multiplicity one. The blow-down $$C=\lim_{i\to 0} \varepsilon_i \tilde L, \quad\mbox{where}\quad\varepsilon_i\to 0,$$ is a union of Lagrangian planes (as these are the only special Lagrangian cones in $\C^2$) and so
        \begin{multline*}
        \lim_{i\to\infty}\int_{\tilde L}\Phi(0,\varepsilon_i^{-2}) \d\H^2=\lim_{i\to\infty}\int_{\varepsilon_i\tilde{L}}\Phi(0,1) \d\H^2 %\\
        =\int_{C}\Phi(0,1) \d\H^2 \geq2.
        \end{multline*}
        From \eqref{sequence.bound} this implies that one can find $l$  such that for every $j$ sufficiently large we have
        $$
                2-\frac{1}{2c_1}\leq\int_{\tilde L^j_0}\Phi(0,l) \d\H^2\leq \int_{L}\Phi(y_j,l \delta_j) \d\H^2+\frac{1}{j}.  
        $$
        This contradicts \eqref{density}.
\end{proof}

We may now complete the proof of Theorem \ref{compactness}. From Lemmas \ref{lemm.se.asympt} and \ref{small} we have that, for all $i$ sufficiently large,
$$\Theta^i_t(y,l)\leq 1+{\varepsilon_0}\quad\mbox{ for every } l\leq \delta t,\,\,y\in \C^2,\mbox{ and }t>0,$$
where $\Theta^i_{t}(y,l)$ is the Gaussian density \eqref{gaussian} of $L^i_t$.   

White's Regularity Theorem \cite{white} implies uniform $C^{2,\alpha}$ bounds for $L^i_{1/2}$ and so  $L=L_{1/2}$ is a smooth multiplicity one self-expander asymptotic to $P_1+ P_2$ with $(P_1,P_2)\in{K}$ and $L^i_{1/2}$ converges to $L$ in  $C^{2,\alpha}$.  Hence, $L\in\mathcal{S}(K)$  as we wanted to show.
\end{proof}

\section{Uniqueness Theorem in $\C^2$}\label{uniqueness.section}

We first prove the uniqueness  for self-expanders which are asymptotic to planes $P_1+P_2$, where $P_1$ and $P_2$ share the same $S^1$-symmetry.
\subsection*{Equivariant case}
We say a Lagrangian surface $N\subset \C^2$ is equivariant if there is a curve $\gamma:\R\rightarrow\C$ or $\gamma:[0,\infty)\rightarrow \C$ so that
$$N=\{(\gamma(s)\cos\alpha, \gamma(s)\sin\alpha)\,|\,s\in \R, \alpha\in[0,2\pi]\}\subset \C^2.$$
Consider the ambient function $\mu=x_1y_2-y_2x_1$.  The relevance of this function is that {an} {embedded Lagrangian} $N$ is equivariant if and only if $N\subset \mu^{-1}(0)$ (see \cite[Lemma 7.1]{neves3} for instance).
 
 Studying the o.d.e. arising from $H=\bfx^{\perp}$, Anciaux \cite{Anciaux} showed that given two equivariant planes $P_1, P_2$, there is a unique  equivariant  Lagrangian self-expander  $L$  asymptotic to  $L_0=P_1+P_2$.
 
 \begin{lemm}\label{anciaux} Suppose that $L$ is a zero-Maslov class Lagrangian self-expander  asymptotic to $L_0=P_1+P_2$, where $P_1,P_2$ are equivariant planes. Then $L$ is equivariant. In particular, it is unique.
 \end{lemm}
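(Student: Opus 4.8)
The plan is to show that the restriction to $L$ of the moment map
$$\mu = x_1y_2 - y_1x_2$$
for the circle action $(z_1,z_2)\mapsto(z_1\cos t - z_2\sin t,\, z_1\sin t + z_2\cos t)$ (whose generator I call $V$) vanishes identically. By the characterisation recalled above (an embedded Lagrangian is equivariant iff it lies in $\mu^{-1}(0)$) together with embeddedness of $L$ from Lemma \ref{embedlem}, this forces $L$ to be equivariant. Uniqueness then follows immediately from Anciaux's theorem: we will have shown that every zero-Maslov class self-expander asymptotic to $P_1+P_2$ is equivariant, and Anciaux proved there is exactly one equivariant such self-expander.

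The heart of the argument is the identity $\mathcal{L}(\mu|_L)=0$, where $\mathcal{L}$ is the operator from \eq{Leq}. Since $V$ is a linear holomorphic Killing field, $\operatorname{grad}_{\C^2}\mu = JV$ and $\overline\nabla^2\mu = J\overline\nabla V$ is a constant symmetric endomorphism commuting with $J$. For any isometrically immersed $L$ one has, writing $f=\mu|_L$,
$$\Delta_L f = \operatorname{tr}_{TL}\!\big(\overline\nabla^2\mu\big) + \langle \operatorname{grad}\mu, H\rangle = \operatorname{tr}_{TL}\!\big(\overline\nabla^2\mu\big) + \langle JV, H\rangle .$$
The key point, which genuinely uses that $L$ is Lagrangian, is that $\operatorname{tr}_{TL}(\overline\nabla^2\mu)=0$: if $e_1,e_2$ is an orthonormal basis of $TL$ then $e_1,e_2,Je_1,Je_2$ is an orthonormal basis of $\C^2$, and $\langle \overline\nabla^2\mu\, v, v\rangle = \langle \overline\nabla^2\mu\, Jv, Jv\rangle$ (as $\overline\nabla^2\mu$ commutes with $J$), so $\operatorname{tr}_{TL}(\overline\nabla^2\mu) = \tfrac12\operatorname{tr}_{\C^2}(\overline\nabla^2\mu)$; and this full trace is zero since $J\overline\nabla V$ is complex-linear with vanishing complex trace. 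Hence $\Delta_L f = \langle JV,H\rangle$. Now using $H=\bfx^{\bot}$, together with $(JV)^{\top}=\nabla f$ and $\langle JV,\bfx\rangle = \langle \overline\nabla^2\mu\,\bfx,\bfx\rangle = 2\mu$, we get
$$\langle JV, H\rangle = \langle JV,\bfx\rangle - \langle (JV)^{\top},\bfx^{\top}\rangle = 2f - \langle \bfx,\nabla f\rangle,$$
so that $\mathcal{L}(f) = \Delta_L f + \langle\bfx,\nabla f\rangle - 2f = 0$.

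It remains to deduce $f\equiv 0$. Since $L_0=P_1+P_2\subset\mu^{-1}(0)$ and $\mu$ is a homogeneous quadratic (indeed $\operatorname{grad}\mu = JV$ is normal to $L_0$ along $L_0$), Theorem \ref{thm.asympt} applied to $L$ alone shows that $f$ and its derivatives decay exponentially along $L$; in particular $f\to 0$ at infinity and $f\in H^2_*(L)$. One then concludes $f\equiv 0$ either by the maximum principle --- a positive interior maximum of $f$ would give $\mathcal{L}(f)<0$ there, and likewise for $-f$, exactly as in the barrier argument in the proof of Theorem \ref{thm.asympt} --- or directly from the injectivity of $\mathcal{L}\colon H^2_*(L)\to L^2(L)$ in Theorem \ref{main.fredholm}. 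Thus $L\subset\mu^{-1}(0)$, $L$ is equivariant, and by Anciaux's theorem it coincides with the unique equivariant self-expander asymptotic to $P_1+P_2$.

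The only delicate step is the trace identity $\operatorname{tr}_{TL}(\overline\nabla^2\mu)=0$, which fails for a generic quadratic function or a generic submanifold and is exactly where the Lagrangian hypothesis enters; everything else is routine. I would also take care to fix the normalisations $\operatorname{grad}\mu = JV$ and $\langle JV,\bfx\rangle = 2\mu$ consistently with the convention $H=\bfx^{\bot}$ (as opposed to $H=-\bfx^{\bot}$), since this is what gives the zeroth-order coefficient of $\mathcal{L}$ the sign needed for the maximum principle.
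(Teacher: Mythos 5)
Your proof is correct, but it takes a genuinely different route from the paper's. The paper argues parabolically: it uses the evolution equation $\tfrac{\d}{\d t}\mu^2=\Delta\mu^2-2|\nabla\mu|^2$ along $L_t=\sqrt{2t}L$ from \cite{neves2}, feeds it into Huisken's monotonicity formula to get that $t\mapsto\int_{L_t}\mu^2\,\Phi(0,1-t)\,\d\H^2$ is non-increasing on $(0,1)$, and then uses Theorem \ref{thm.asympt} to show this quantity tends to $\int_{P_1+P_2}\mu^2\,\Phi(0,1)\,\d\H^2=0$ as $t\to 0$, forcing $\mu\equiv 0$ on $L$. You instead work elliptically on the fixed slice: the identity $\mathcal{L}(\mu|_L)=0$ is the time-independent incarnation of that same evolution equation specialised to a self-similar solution, and your trace computation ($\operatorname{tr}_{TL}\overline\nabla^2\mu=\tfrac12\operatorname{tr}_{\C^2}\overline\nabla^2\mu=0$ via the Lagrangian condition and $J$-invariance of the Hessian) together with $H=\x^\bot$ and Euler's relation $\langle\overline\nabla\mu,\x\rangle=2\mu$ checks out. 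Both arguments need Theorem \ref{thm.asympt} — the paper to evaluate the $t\to 0$ limit of a non-compactly-supported integrand, you to get the decay of $\mu|_L$ (which, as you note, is only $O(|x|e^{-b|x|^2})$ rather than $O(e^{-2b|x|^2})$ since $\overline\nabla\mu$ is nonzero normal to $L_0$, but that is still more than enough). Your conclusion via the maximum principle is the most elementary finish; invoking the injectivity from Theorem \ref{main.fredholm} also works and has the merit of reusing machinery already built in Section \ref{section.fredholm}. What the paper's route buys is that it never needs the pointwise trace identity or any decay of $\nabla(\mu|_L)$, only the weighted integral and measure convergence at $t=0$; what yours buys is a purely elliptic, slice-wise argument that avoids Brakke-flow/monotonicity technicalities. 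Both reduce uniqueness to Anciaux's ODE classification in the same way.
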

 \begin{proof}
From \cite[Lemma 3.3]{neves2} we know that along $L_t=\sqrt{2t}L$,
$$ \frac{\d}{\d t}\mu^2=\Delta\mu^2-2|\nabla \mu|^2.$$
Using the evolution equation above in Huisken's monotonicity formula  we have that, for $t<1$,
$$\frac{\d}{\d t}\int_{L_t} \mu^2\Phi(0,1-t) \d\H^2\leq 0.$$
From Theorem \ref{thm.asympt} we see that
$$\lim_{t\to 0}\int_{L_t} \mu^2\Phi(0,1-t) \d\H^2= \int_{P_1+P_2} \mu^2\Phi(0,1)\d\H^2=0$$
because $P_1, P_2$ are equivariant. Thus 
$$\int_{L_t} \mu^2\Phi(0,1-t) \d\H^2=0$$
for all $t<1$ and this implies the desired result.
 \end{proof}

\subsection*{General case}

 Consider the set $\Lambda \subset G_L(2,\C^2)\times G_L(2,\C^2)$ defined in \eqref{lambdaeq}
 $$\Lambda=\{(P_1,P_2)\,|\, P_1\cap P_2=\{0\}\}\setminus  SL,$$
and let 
$$E=\Lambda\cap \{(P_1,P_2)\,|\,P_1\subset \mu^{-1}(0),P_2\subset \mu^{-1}(0)\}.$$
 
 \begin{thm} Given $(P_1,P_2)\in \Lambda$ there is a unique zero-Maslov class self expander $L$ which is asymptotic to $L_0=P_1+P_2$.
 \end{thm}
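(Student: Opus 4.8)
The plan is to join $(P_1,P_2)$ to an equivariant pair by a path in $\Lambda$ along which the difference of the Lagrangian angles of the two planes stays constant, and then to transport the self-expander along this path by combining the local deformation theory of Theorem~\ref{local.uniqueness} with the compactness of Theorem~\ref{compactness}; at the equivariant endpoint uniqueness comes from Lemma~\ref{anciaux} and existence from \cite{Anciaux}.

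First I would build the path. After a fixed unitary change of coordinates we may assume $P_1=\R^2$ and that the circle exhibiting $P_1$ as an equivariant plane is the standard $\SO(2)\subset\U(2)$, whose moment map is $\mu$. Since $P_1\cap P_2=\{0\}$ we may write $P_2=U\cdot\mathrm{diag}(e^{i\theta_1},e^{i\theta_2})\cdot\R^2$ with $U\in\SO(2)$ and $\theta_1,\theta_2\in(0,\pi)$. Set $\phi=\tfrac12(\theta_1+\theta_2)$ and, for $s\in[0,1]$,
\[
P_2(s)=U\cdot\mathrm{diag}\big(e^{i((1-s)\theta_1+s\phi)},\,e^{i((1-s)\theta_2+s\phi)}\big)\cdot\R^2 .
\]
Each angle $(1-s)\theta_j+s\phi$ lies in $(0,\pi)$, so $(P_1,P_2(s))$ is transverse for every $s$; the two angles sum to the constant $\theta_1+\theta_2\notin\pi\Z$ (because $(P_1,P_2)\in\Lambda$), so the pair is never area-minimizing and the difference of the Lagrangian angles of $P_1$ and $P_2(s)$ is constant in $s$. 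Thus $(P_1,P_2(s))\in\Lambda$ for all $s$ and the path $s\mapsto(P_1,P_2(s))$ satisfies the hypotheses of Theorem~\ref{local.uniqueness}. Finally, since $U$ fixes $\R^2$ and commutes with $e^{i\phi}\,\mathrm{Id}$, we get $P_2(1)=e^{i\phi}\R^2\subset\mu^{-1}(0)$, so $(P_1,P_2(1))\in E$.

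Then I would run an open--closed argument on
\[
I=\{s\in[0,1]\ :\ \text{there is a unique zero-Maslov class self-expander asymptotic to }P_1+P_2(s)\}.
\]
It is nonempty, as $1\in I$: since $(P_1,P_2(1))\in E$, \cite{Anciaux} gives existence of an equivariant self-expander, and Lemma~\ref{anciaux} shows every zero-Maslov class self-expander asymptotic to $P_1+P_2(1)$ is equivariant, hence unique. For openness, fix $s_0\in I$ with associated self-expander $L^{s_0}$ and apply Theorem~\ref{local.uniqueness} to $L^{s_0}$ along the reparametrized path through $(P_1,P_2(s_0))$: for $s$ near $s_0$ it yields a zero-Maslov class self-expander asymptotic to $P_1+P_2(s)$ that is $\varepsilon$-close in $C^2$ to $L^{s_0}$ in $B_{R_0}$, unique with this property, and varying $C^{2,\alpha}$-continuously in $s$. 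If uniqueness failed near $s_0$ we could find $s_j\to s_0$ and distinct self-expanders $L^{s_j}_a\ne L^{s_j}_b$ asymptotic to $P_1+P_2(s_j)$. All of these lie in $\mathcal S(K)$ for the compact set $K=\{(P_1,P_2(s)):s\in[0,1]\}\subset\Lambda$, so by Theorem~\ref{compactness} subsequences converge in $C^{2,\alpha}$ to self-expanders asymptotic to $P_1+P_2(s_0)$, both equal to $L^{s_0}$ because $s_0\in I$. Hence for large $j$ both $L^{s_j}_a$ and $L^{s_j}_b$ are $\varepsilon$-close in $C^2$ to $L^{s_0}$ in $B_{R_0}$, which forces $L^{s_j}_a=L^{s_j}_b$, a contradiction; so $I$ is open. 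For closedness, let $s_j\in I$, $s_j\to s_\infty$, with unique self-expanders $L^{s_j}$. By Theorem~\ref{compactness} a subsequence converges in $C^{2,\alpha}$ to a self-expander $L^{s_\infty}$ asymptotic to $P_1+P_2(s_\infty)$, so existence holds at $s_\infty$. Given any self-expander $\tilde L$ asymptotic to $P_1+P_2(s_\infty)$, Theorem~\ref{local.uniqueness} applied to $\tilde L$ gives a $C^{2,\alpha}$-continuous family $\tilde L^s$ with $\tilde L^{s_\infty}=\tilde L$; as each $s_j\in I$ forces $\tilde L^{s_j}=L^{s_j}$ for large $j$, letting $j\to\infty$ along the subsequence gives $\tilde L=L^{s_\infty}$. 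Thus $s_\infty\in I$, so $I$ is closed and $I=[0,1]$; in particular $0\in I$, which is the theorem.

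The substance of the argument lies in Theorems~\ref{local.uniqueness} and~\ref{compactness}, the second being where $n=2$ is used essentially. Within this last step the delicate points are: choosing the deformation path so that it stays in $\Lambda$ with constant Lagrangian angle difference while ending at an equivariant pair (handled above by writing the characterizing angles in $(0,\pi)$ and interpolating linearly), and, in the open--closed argument, feeding the compactness theorem into the local uniqueness theorem carefully enough to upgrade ``uniqueness among nearby self-expanders'' to genuine uniqueness at every point of the path.
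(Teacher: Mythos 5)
Your proposal is correct and is essentially the paper's argument: the same interpolation of the characteristic angles of $P_2$ towards an equivariant pair (keeping the angle sum, hence the Lagrangian angle, constant and staying in $\Lambda$), with Theorem \ref{local.uniqueness}, Theorem \ref{compactness} and Lemma \ref{anciaux} supplying exactly the three inputs. The only cosmetic difference is that the paper packages the open--closed step as ``the projection $\pi:\mathcal{S}(K)\to[0,1]$ is a local diffeomorphism with compact domain, hence a covering map with $\pi^{-1}(1)$ a point,'' citing \cite{jlt} for surjectivity of $\pi$, whereas you unwind this into an explicit continuity argument that needs existence only at the equivariant endpoint (via \cite{Anciaux}).
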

 \begin{rmk} The existence of such self-expanders was proven by Joyce--Lee--Tsui in \cite{jlt}.  Explicit formulae for the self-expanders are given in \cite[Theorem C]{jlt}.
 \end{rmk}
 \begin{proof}
 
 Choose  a basis of $\C^2$ so that $P_1$ is the real plane and $$P_2=\{(e^{i\theta_1}x,e^{i\theta_2}y)\,|\, x,y\in \R\}.$$
 Set
 $$P_2(s)=\{(e^{i\theta_1-s(\theta_1-\theta_2)/2}x,e^{i\theta_2+s(\theta_1-\theta_2)/2}y)\,|\, x,y\in \R\}.$$
 The key properties of $\big(P_1,P_2(s)\big)$ are that
 \begin{itemize}
 \item $\big(P_1,P_2(s)\big)\in \Lambda$ for all $0\leq s\leq 1$ and {the} {Lagrangian angle of  $P_2(s)$} is constant;
 \item  $P_1\subset \mu^{-1}(0)$ and $P_2(1)\subset \mu^{-1}(0)$.
 \end{itemize}
Consider the compact subset of $\Lambda$ given by  $K=\big\{\big(P_1,P_2(s)\big)\big\}_{0\leq s\leq 1}$ and  recall 
\begin{align*}
\mathcal S(K)=\{L\subseteq\C^2\,|\,& L\mbox{ is a zero-Maslov class Lagrangian self-expander}\\
&\mbox{%with $H=\bfx^{\perp}$ 
which is asymptotic to $P_1+P_2$ where }(P_1,P_2)\in {K}\}.
\end{align*}
Consider the obvious projection map
 $\pi:\mathcal{S}(K)\rightarrow[0,1]$.

From \cite[Theorem C]{jlt} we know that $\pi$ is surjective.
  By Theorem \ref{local.uniqueness}, one may choose a suitable topology on $\mathcal{S}(K)$
 so that $\pi$ is a local diffeomorphism.  By Theorem \ref{compactness}, {$\mathcal{S}(K)$} is also compact with respect to this
 topology.  Therefore $\pi$ is a covering map.  
However, by Lemma \ref{anciaux}, we have that  $\pi^{-1}(1)$ consists of a single element and so $\pi$ is a
 diffeomorphism. In particular, $\pi^{-1}(0)$ consists of a single element.
 \end{proof}

\end{document}